\newcolumntype{T}[1]{S[table-format=#1]}
\newcolumntype{U}[1]{S[table-format=#1,
                       round-mode=places, % apply rounding automatically
                       round-precision=2]}
\DeclareMathOperator*{\argmax}{arg\,max}
\newcommand{\real}{\mathbb{R}}
\newcommand{\Ee}{\mathbb E}
\newcommand{\Pp}{\mathbb P}
\newcommand{\I}{\mathbb 1}
\newcolumntype{?}{!{\vrule width 0.7pt}}
\numberwithin{equation}{section}
\theoremstyle{plain}
\newtheorem{theorem}{Theorem}
\newtheorem{lemma}[theorem]{Lemma}
\theoremstyle{definition}
\newtheorem{remark}[theorem]{Remark}
\newtheorem{definition}[theorem]{Definition}
\newtheorem{assumption}{Assumption}
\numberwithin{theorem}{section}
\sodef\myspace{}{.2em}{1em plus1em}{2em plus.1em minus.1em}
\newcommand{\omu}[3]{\overset{\textrm{#1}}{\underset{\textrm{#3}}{#2}}}
\newcommand{\pomu}[3]{\overset{\textrm{\phantom{#1}}}{\underset{\textrm{\phantom{#3}}}{#2}}}
\newcommand{\rli}[3]{\sideset{_{\,#1}^{\mathrm{R}}}{_{#2}^{#3}}{\mathop{\mathrm{I}}}}  % Riemann--Liouville Integral
\newcommand{\rlj}[3]{\sideset{_{#1}^{\mathrm{R}}}{_{#2}^{#3}}{\mathop{\mathrm{J}}}}
\newcommand{\cei}[3]{\sideset{_{~\,#1}^{\mathrm{Ce}}}{_{#2}^{#3}}{\mathop{\mathrm{I}}}}
\newcommand{\rld}[3]{\sideset{_{\,#1}^{\mathrm{R}}}{_{#2}^{#3}}{\mathop{\mathrm{D}}}}
\newcommand{\mad}[3]{\sideset{_{#1}^{\mathrm{M}}}{_{#2}^{#3}}{\mathop{\mathrm{D}}}}
\newcommand{\ced}[3]{\sideset{_{~\,#1}^{\mathrm{Ce}}}{_{#2}^{#3}}{\mathop{\mathrm{D}}}} %Censored Derivative
\newcommand{\Lscr}{\mathscr{L}}
\g@addto@macro{\endabstract}{\@setabstract}
\newcommand{\authorfootnotes}{\renewcommand\thefootnote{\@fnsymbol\c@footnote}}%
\begin{document}
\title{\bf{censored fractional Bernstein derivatives and stochastic processes}}
\maketitle
\begin{center}
\authorfootnotes
  Cailing Li%\footnote{Author A}
  \textsuperscript{} 
%  David Berger%\footnote{Author B}
 % \textsuperscript{1},
  %Ren\'e L.Schilling 
  \footnote{cailingli.math@gmail.com \\  \quad \quad This paper is part of Cailing Li's PhD thesis \cite{2023_Li} which was written at Technische Universität Dresden 
 under the supervision of Ren\'e L.Schilling. The author is grateful to David Berger for his invaluable discussion.} %Author %D\footnote{Author %D}\textsuperscript{2} and
  %Author E\footnote{Author E}\textsuperscript{2} \par \bigskip

  \textsuperscript{}Institute of Mathematical Stochastics, Technische Universität Dresden, D-01217 Dresden, Germany \par \bigskip
%\author{Cailing Li, David Berger, Ren\'e L. Schilling }
%\affil{TeX.SX}
\end{center}
\section*{Abstract}
In this paper, we define the censored fractional Bernstein derivative on the positive half line $(0, \infty)$ based on the Bernstein Riemann--Liouville fractional derivative. This derivative can be shown to be the generator of the  censored subordinator by solving a resolvent equation. We also show that the  censored subordinator hits the boundary in finite time under certain conditions.

\section{Introduction}
In this paper we focus on a special class of L\'evy processes see e.g.\ \cite{2013_Boettcher,1999_Satoa}, the so--called subordinators. A \textbf{L\'evy process} is a stochastic process with c\`adl\`ag (right--continuous, finite left limits) paths and independent and stationary increments. A \textbf{subordinator} is a L\'evy process $S$ with $S_0=0$ and a.s.\ increasing paths. In this case  the Laplace transform defines $S$ uniquely; the Laplace transform is given by,   % (rather than the characteristic function), 
\begin{equation}\label{bernstein}
    \mathds E\left(e^{- \lambda S_t}\right) = e^{-t f(\lambda )}, \quad  \lambda>0,%\quad
    %f(\lambda) = b \lambda + \int_{(0,\infty)} (1-e^{-\lambda  r})\,\mu(dr)
\end{equation}
where $f$ is a \textbf{Bernstein function}. A Bernstein function can be expressed by
\begin{equation}\label{sub-bf-0}
%\label{sub-bf}
    f(\lambda) = b\lambda + \int_0^\infty (1-e^{-\lambda x})\,\mu(dx), \quad  \lambda>0,
\end{equation}
where $b\geq 0$ is the drift and $\mu$ is the jump measure of the subordinator on $(0,\infty)$ such that $\int_0^\infty \min\{1,x\}\,\mu(dx)<\infty$.
%with $b\geq 0$ and a Radon measure $\mu$ on $(0,\infty)$ with $\int_{(0,\infty)}\min\{1,r\}\,\mu(dr)<\infty$. %Denote $\bar{\mu}(x)=\mu(x, \infty)$. 
%Since a subordinator is increasing, it cannot have a diffusion part, while $(b,\mu)$ corresponds to $(a,\nu)$ in the general setting. 
There is a deep connection between generators of subordinators and fractional derivatives. Let us recall some facts about fractional derivatives in general.
 
 %Since $t\mapsto S_t$ is increasing, we can consider its \textbf{generalized right-continuous inverse}
%$$ E_t = S_t^{-1} = \inf \{ u \geq 0 : S_u > t\},$$
%see \cite{2014_Schilling} for its properties. If $\mu(0,\infty)=\infty$, $t\mapsto E_t$ is unbounded and continuous.

%
%Since $f$ from \eqref{bernstein} is given  by \eqref{sub-bf-0}, the infinitesimal generator of the subordinator $S$ is formally given by 
%\begin{align*}
%\mathcal{A} u(x)= \,b\cdot \frac d{dx}u(x)+\int_0^\infty \left(u(x)-u(x-t)\right)\mu(dt), u[0, \infty) \to \real.
%\end{align*}
% and we can show that the formal adjoint $A^*$ in $L^2(0, \infty)$ is just 
% \begin{align*}
%\mathcal{A}^*u(x)= \,-b\cdot \frac d{dx}u(x)-\int_0^\infty \left(u(x)-u(x-t)\right)\mu(dt).
%\end{align*}

%In this thesis, we focus on so called Feller processes.  
%Before we can go on, we need to recall the notion of a L\'evy process and a subordinator. 

Fractional derivatives, in particular fractional time derivatives, have recently become important tools to model real-world phenomena. There are important applications in Physics, Chemistry and Biology %(so--called “master equations” for “anomalous diffusions”) 
as well as in Mathematics. A good introduction to applications is given in the monograph Klages el al \cite{2008_Klages}.
% One of the most frequently used extensions of the usual derivative is the
% \textbf{ Riemann--Liouville fractional derivative} of order $\alpha\in (0,1)$ on $[0,x]$ 
%     \begin{align*}
%       \rld{0}{x}{\alpha} \phi = \frac 1{\Gamma(1-\alpha)} \frac d{dx} \int_0^x \frac{\phi(s) }{(x-s)^\alpha}\,ds, \quad \alpha\in (0,1).
%     \end{align*} 
% It is defined in such a way that it “fills the gaps” between the zero-order “derivative” (i.e.  $\phi = \left(\frac d{dx}\right)^0\phi$) and the usual derivative (i.e.  $\phi'=\left(\frac d{dx}\right)^1\phi$) in a continuous way. Note that  $\rld{0}{x}{\alpha}$ is a nonlocal integral operator  if $\alpha \notin \mathbb{N}_0$. Another version is the \textbf{fractional derivative in the sense of Caputo} 
% $$
%      \cd{0}{x}{\alpha} \phi = \frac 1{\Gamma(1-\alpha)} \frac d{dx}\int_0^x \frac{\phi(s)-\phi(0)}{(x-s)^\alpha}\,ds,
%     \quad t\geq 0,\;\alpha\in(0,1).
% $$ 
%  It is closely related to the Riemann--Liouville derivative, but it is easier to handle, which comes at the price of higher regularity assumptions on $\phi$. %Let me point out that there are further types of derivatives,  which are all called “fractional”. 
 Standard references are \cite{2008_Klages, 2010_Diethelm} and \cite{1993_Samko}. %Meerschaert et al. %\cite{2009_Meerschaert} consider a fractional Cauchy problem which replace the usual first-order time derivative by a fractional derivative involving Caputo fractional derivative.  Hernández-Hernández and Kolokoltsov \cite{2016_HernandezHernandez} provide well-posedness results and stochastic representations for the solutions to equations involving 
% generalized operators of Caputo fractional derivative. The well-posedness and integral representations of
% the solutions to nonlinear equations involving generalized Caputo
% and Riemann–Liouville type fractional derivatives were studied in \cite{2018_HernandezHernandez}. 
For a general background in fractional calculus and fractional differential equations, we refer to  \cite{2010_Diethelm, 2006_Kilbas, 1998_Podlubny, 2007_Mainardi, 2018_HernandezHernandez, 2016_HernandezHernandez, 2009_Meerschaert, 2017_Chen, 2018_Sin}.
Our starting point is the so--called \textbf{Marchaud fractional derivative},  which is given by
\begin{align}
\label{mar-def-001}
    \mad{}{+}{\alpha} \phi(x)&=\frac{\alpha}{\Gamma(1-\alpha)}\int_{0+}^\infty \frac{\phi(x)-\phi(x-s)}{s^{1+\alpha}}\,ds.
   % \\&=\frac{\alpha}{\Gamma(1-\alpha)}\int_{-\infty}^t \frac{u(t)-u(s)}{(t-s)^{1+\alpha}}\,ds
\end{align}
% If the function $\phi$ is only defined on the positive axis $(0, \infty)$ the formula \eqref{mar-def-001}
%  is not well defined unless we extend $\phi$ onto $(-\infty, \infty)$. Our main observation is that using the correct extension we obtain from 	\eqref{mar-def-001} discussed fractional derivatives. For example, setting $\phi|_{(-\infty,0)}=0$ (\textbf{ killing--type extension} $\phi^0$), we get the Riemann--Liouville derivative,  while the extension $\phi|_{(-\infty,0)}=\phi(0)$ (\textbf{sticky--type extension} $\phi^\sigma$) leads to the Caputo derivative. Other extensions are, of course, possible, see \cite{2023_Li}.  %In this thesis,  we give a basic idea to get censored fractional derivative using Marchaud fractional derivative which involves to solve a convolution equation in Chapter 2, Section 1. 

The advantage of the form \eqref{mar-def-001} is that it relaxes on the regularity requirements on  $\phi$, and that it shows the rationale behind the derivative: it is a limit of weighted sums of the increments of $\phi$, $\phi(x)-\phi(x-s)$ from various past values (i.e. in $(-\infty, x)$)  up to the present value $t$. This allows us to  generalize fractional derivatives by using a different kind of positive weights, using the theory of Bernstein functions \cite{2012_Schilling}. Moreover this establishes in a natural way the connection to L\'evy processes. Recently, censored fractional derivatives see below are studied in \cite{2021_Du}.  The \textbf{censored fractional derivative} is given by 
\begin{equation}\label{cen-der-002}
 \ced{0}{x}{\alpha}\phi(x)=\frac{\alpha}{\Gamma(1-\alpha)}\int_0^x \left(\phi(x)-\phi(x-s)\right) s^{-\alpha-1}\,ds,
 \end{equation}
 where $\alpha\in (0, 1)$. Notice that the input extends over $(0,x)$ rather than $(0, \infty)$, due to "censoring" .
% In this paper \cite{2021_Du}, they proved the well-posedness of the basic initial value problem
% \begin{equation}
% \left\{\begin{aligned}
% \ced{0}{x}{\alpha}\phi(x)&=g(x), &x\in (0, T],
% \\ \phi(x)&=\phi_0, &x=0,
% \end{aligned}
% \right.
% \end{equation}
% for any $T>0$, $\phi_0\in \real$ and certain $g\in C(0, T]$. 
%Considering \eqref{cen-der-002} and \eqref{mar-def-001}, a natural question is  whether it is possible to define censored fractional derivative using the extension method leading from the Marchaud derivative to the censored fractional derivative, as  in the case of Riemann Louville fractional derivatives and Caputo derivatives. We will show that, in general, this is possible, but the extension is more involved than in the first two cases, see Chapter 2 for a detailed discussion. 

Based on the Marchaud fractional derivative, we have an alternative choice to generalize this,  starting from  a Bernstein function (or the generator of  a subordinator).  To do so, it is useful to recall the following:
%A \textbf{Bernstein function} is represented by
%\begin{equation}\label{sub-bf-0}
%%\label{sub-bf}
%    f(\lambda) = b\lambda + \int_0^\infty (1-e^{-\lambda x})\,\mu(dx),
%\end{equation}
%where $b\geq 0$ and $\mu$ is a measure on $(0,\infty)$ such that $\int_0^\infty 1\wedge x\,\mu(dx)<\infty$. 
%
%
%
%A \textbf{subordinator} is a L\'evy process $S$ with $S_0=0$ and a.s.\ increasing paths. In this case one considers the Laplace transform (rather than the characteristic function) and one has
%\begin{equation}\label{bernstein}
%    \mathds E\left(e^{- \lambda S_t}\right) = e^{-t f(\lambda )},%\quad
%    %f(\lambda) = b \lambda + \int_{(0,\infty)} (1-e^{-\lambda  r})\,\mu(dr)
%\end{equation}
%where $f$ is given by \eqref{sub-bf-0}. 
%%with $b\geq 0$ and a Radon measure $\mu$ on $(0,\infty)$ with $\int_{(0,\infty)}\min\{1,r\}\,\mu(dr)<\infty$. %Denote $\bar{\mu}(x)=\mu(x, \infty)$. 
% Note that a subordinator cannot have a diffusion part (since it is increasing) while $(b,\mu)$ corresponds to $(a,\nu)$ in the general setting. %Since $t\mapsto S_t$ is increasing, we can consider its \textbf{generalized right-continuous inverse}
%$$ E_t = S_t^{-1} = \inf \{ u \geq 0 : S_u > t\},$$
%see \cite{2014_Schilling} for its properties. If $\mu(0,\infty)=\infty$, $t\mapsto E_t$ is unbounded and continuous.

Since $f$ from \eqref{bernstein} is given  by \eqref{sub-bf-0}, the infinitesimal generator of the subordinator $S$ is formally given by 
\begin{align*}
\mathcal{A} u(x)= \,b\cdot \frac d{dx}u(x)+\int_0^\infty \left(u(x+t)-u(x)\right)\mu(dt) 
\end{align*}
 for suitable functions $u:\, \real^+  \to \real$. We can demonstrate that the formal adjoint $\mathcal{A}^*$ in the space $L^2(0, \infty)$ can be characterized using different types of extensions, such as killing--extension ($u(x)\mathds{1}_{(-\infty,0)}(x)=0$), sticky--extension ($u(x)\mathds{1}_{(-\infty,0)}(x)=u(0)$), and even--extension ($u(x)\mathds{1}_{(-\infty,0)}(x)=u(-x)$). Each type of extension influences the properties of $\mathcal{A}^*$ in a distinct manner is given by 
 \begin{align*}
\mathcal{A}^*u(x)= \,-b\cdot \frac d{dx}u(x)-\int_0^\infty \left(u(x)-u(x-t)\right)\mu(dt).
\end{align*}
Comparing this formula with the classical Marchaud derivative \eqref{mar-def-001}, the candidate for a general Marchaud fractional derivative, which we call  \textbf{Bernstein Marchaud fractional derivative},  is 
\begin{equation}\label{mau-fra-ber}
\mad{}{+}{f}u(x)
    = \,b\cdot \frac d{dx}u(x)+\int_0^\infty \left(u(x)-u(x-t)\right)\mu(dt).
\end{equation}
 Observe that $\mad{}{+}{f}$ is connected with the Bernstein function $f$ and its unique representing pair $(b, \mu)$.  Moreover $-\mad{}{+}{f}$ is the generator of an adjoint subordinator, but subject to a boundary condition that must reflect the type of  the extension. 

 We obtain  the \textbf{Bernstein Riemann--Liouville derivative} \eqref{re-fra-ber} using the killing type extension %, and a \textbf{ Bernstein  Caputo derivative} \eqref{ca-fra-ber}  by the sticky type extension, 
from the Bernstein Marchaud fractional derivative \eqref{mau-fra-ber}.
\begin{equation}\label{re-fra-ber}
\rld{0}{x}{f}\phi= b\cdot \frac d{dx}\phi^0 (x)+\int_0^\infty \left(\phi^0(x)-\phi^0(x-s)\right) \mu(ds),
\end{equation}
% \begin{equation}\label{ca-fra-ber}
% \cd{0}{x}{f}\phi= b\cdot \frac d{dx}\phi^\sigma(x)+\int_0^\infty \left(\phi^\sigma (x)-\phi^\sigma(x-s)\right) \mu(ds),
% \end{equation}
where $\phi^0$ %and $\phi^\sigma$ 
is the  killing--type extension by setting $\phi|_{(-\infty,0)}=0$ (\textbf{killing--type extension} $\phi^0$). %and  the sticky--type extension, respectively, 
%that we mentioned in %\eqref{mar-def-001}, 
%see P.\pageref{mar-ext}.
%Kochubei \cite{2011_Kochubei} develops a theory of
%relaxation and diffusion equations associated with operators in the time variable \eqref{ca-fra-ber}. %For a general 
%Chen \cite{2017_Chen}  defines a  Bernstein  Caputo derivative \eqref{ca-fra-ber} and he studied the existence and uniqueness of solutions for general fractional time parabolic equations, and their probabilistic representations in terms of the corresponding inverse subordinators with or without drifts.   Initial value problems of fractional differential equations are considered in Sin \cite{2018_Sin} where the fractional derivatives are defined as the  Bernstein  Caputo derivative \eqref{ca-fra-ber}.  A probabilistic approach to solve linear equations involving  Bernstein  Caputo derivatives   \eqref{ca-fra-ber} and Riemann--Liouville \eqref{re-fra-ber} derivatives and using the probabilistic interpretation of these operators as the generators of interrupted Feller processes was proposed in \cite{2015_HernandezHernandez}. Hernández-Hernández and Kolokoltsov \cite{2016_Hernandez} provide a probabilistic approach to solve  resolvent equations involving  Bernstein Caputo \eqref{ca-fra-ber} and  Bernstein Riemann-Liouville \eqref{re-fra-ber} derivatives.  They establish, in particular,  the well-posedness and integral representations of
%the solutions to nonlinear equations involving Bernstein Caputo \eqref{ca-fra-ber}
%and Riemann--Liouville   \eqref{re-fra-ber} fractional derivatives. 

In this paper, we  introduce a general censored fractional derivative and  study its properties. We  discuss various aspects such as the resolvent equation and the construction of the corresponding Markov process. By investigating these topics, we aim to gain a deeper understanding of the characteristics and behavior of the censored fractional derivative.
%As in the above observation, \eqref{cen-gen-8} is a candidate for a generalized censored fractional derivative 
 Examining the structure of \eqref{cen-der-002},  a good candidate for a generalized censored fractional derivative is
\begin{equation}\label{cen-gen-8}
\ced{0}{x}{f}\phi(x)=\int_0^x \left(\phi(x)-\phi(x-s)\right)\mu(ds).
\end{equation}
 %we study the above operator  \eqref{cen-gen-8} systematically from the point of view of stochastic analysis which 
 Our starting point is that the operator  given by \eqref{cen-gen-8}  can be viewed as generator of a decreasing subordinator $x-S_t$, where we allow  only  those jumps such that the generator lands inside $(0,\infty)$ and  we suppress the jumps such that the subordinator would land outside of $(0, \infty)$. % We will apply the Hille--Yosida theorem, which  is a powerful tool to study the relation between the semigroup, the corresponding Markov process and its generator; for further information we refer the reader to \cite{2012_Pazy}.  
 In order to show that $\ced{0}{\cdot}{f}$  generates a stochastic process we apply  the Hille--Yosida theorem \cite{2012_Pazy}, which leads to  the necessity to solve the following resolvent equation in  $C_{\bar{\mu}}[0,T]$ (see p. \pageref{spa-re-eq} for the definition)
\begin{equation}\label{res-eq-9}
\left \{\begin{aligned}
\ced{0}{x}{f}\phi(x)&=\lambda \phi(x), \quad \, x>0, \quad \lambda\in \real,
\\ \phi(0)&=\phi_0.
\end{aligned}
\right.
\end{equation}
 %For $\phi$ in $C_{\bar{\mu}}[0,T]$, a key step to solve \eqref{res-eq-9} is to construct the right inverse $\cei{0}{\cdot}{f}$ of $\ced{0}{\cdot}{f}$. 
 A key step to solve \eqref{res-eq-9}, we need so--called \textbf{Sonine pairs} in Section 2 and we refer the reader to \cite{2011_Kochubei, 2002_Samko, 2003_Samko, 2020_Hanyga, 2022_Luchko}  and p. \pageref{son-p-3-1} in this paper. %There is a deep one--to--one connection between Sonine pairs and so--called special Bernstein functions -- but this is a highly theoretical result, since we know  not much about special Bernstein functions cf. \cite{2012_Schilling}. Therefore, we use the more benign class of complete Bernstein functions.  Thus, we will investigate the connection between complete Bernstein functions and Sonine pairs  in Section 3. % We can show that every complete Bernstein function  corresponds to a completely monotone Sonine pairs.  %The properties of  completely monotone Sonine pair will be  reviewed and, furthermore, we provide several examples to illustrate different of Sonine pairs, see P.\pageref{3-3-2}.  %First of all we explore the right inverse of $\ced{0}{\cdot}{f}$ and then we focus on to explore the above equation \eqref{res-eq-9} has a unique solution.% Here we provide a different methods to get the solution.  

Finally  we prove that \eqref{cen-gen-8} is the generator of the censored subordinator $S^c=(S^c_t)_{t\geq 0}$ in $(0, \infty)$ and show that, under certain assumptions, this process has the Feller property. %We will construct $S^c$ by repeatedly resurrecting the killed decreasing subordinator and using the piecing together procedure, for the construction of Markov processes by the piecing out method,  developed in \cite{1966_Nobuyuki}. %The starting point of the censored process is always assumed to be  some fixed $x>0$. This construction guarantees that we get a right continuous strong Markov process by using Theorem 1.1 in \cite{1966_Nobuyuki}. %The construction is : run $x-S^1_t$ until $\tau_1$, the time when it first exits 0, where $S^1$ is an increasing subordinator starting at 0. If $x-S^1_{\tau_1}$ is less equal than 0, then kill the process, otherwise piece together an independent copy of $S^1$ started at $x-S^1_{\tau_1}$ and repeated the same procedure for at most countably many times.  Thus we prove the above construction $S_t^c$ is a Feller process and which has a generator $\eqref{cen-gen-8}$.

% Chapter  5 is devoted to applications of the resolvent equation \eqref{res-eq-9} in which the Sonine pair is assumed to be of regular variation. Here we will get more specific results due to the special properties of regularly varying functions. For further properties of regularly varying functions, we refer the reader to \cite{1989_Bingham}.  
\section{Basics}
In this section, we will present some basic results about the Bernstein Riemann--Liouville fractional derivative. For a general study of fractional derivatives, we refer to Samko et.al. \cite{1993_Samko}.
\begin{definition}[Extension]\label{mar-ext}
    Let $\phi : [0,\infty)\to\real$. Then we can extend $\phi$ in the following way to become a function on $\real$
    \begin{itemize}
    \item `Killing type' extension: $\phi^0(x)=\phi(x)\I_{[0,\infty)}(x) + 0 \cdot \I_{(-\infty,0)}(x)$.
    %\item `Sticky type' extension: $\phi^\sigma(x) := \phi(x)\I_{[0,\infty)}(x) + \phi(0)\I_{(-\infty,0)}(x)$.
    %\item `Even' extension: $\phi^e (x) := \phi(x)\I_{[0,\infty)}(x) + \phi(-x)\I_{(-\infty,0)}(x)$.
     %\item `Shift' extension: $\phi^s (x, y) := \phi(x)\I_{[0,\infty)}(x) + \phi(x+y)\I_{(-\infty,0)}(x)$, for  all $y\in (0, \infty)$.
    \end{itemize}
    %\normal
%  The corresponding function spaces are denoted by
\end{definition}
%Recall that a Bernstein function with $f(0)=0$ is represented by
% \begin{equation}
% \label{sub-bf}
%     f(\lambda) = b\lambda + \int_0^\infty (1-e^{-\lambda x})\,\mu(dx)
% \end{equation}
% where $b\geq 0$ and $\mu$ is a measure on $(0,\infty)$ such that $\int_0^\infty 1\wedge x\,\mu(dx)<\infty$.
We will now introduce the definitions of two key concepts: the Marchaud fractional derivative and the Riemann--Liouville fractional derivative. Additionally, we will discuss the Riemann--Liouville fractional integral, all of which are induced by a Bernstein function denoted as $f$.
\begin{definition}\label{mar-defin}%[Generalized Marchaud fractional derivative]
Let $\phi:\real\to\real$ be a function which is defined on the whole real axis and let $f$ be a Bernstein function given by \eqref{sub-bf-0}. The \textbf{Bernstein Marchaud fractional derivatives for the Bernstein function $f$} are the following operators
\begin{align}
\label{bf-mar-def+}
    \mad{}{+}{f}\phi(x)
    := \,b\cdot \frac d{dx}\phi(x)+\int_0^\infty \left(\phi(x)-\phi(x-t)\right)\mu(dt)
    %\frac{\alpha}{\Gamma(1-\alpha)}\int_{0+}^\infty \frac{u(x)-u(x-t)}{t^{1+\alpha}}\,dt
    %= &\,b\cdot \frac d{dx}u(x)+\int_{-\infty}^x \left(u(x)-u(t)\right)\,\mu(dt)
    %=: \mad{-\infty}{x}{f}u\\
%
%\\ \label{bf-mar-def-}
   % \mad{}{-}{f}\phi(x)
   % :=-b\cdot \frac d{dx}\phi(x)+\int_0^\infty \left(\phi(x)-\phi(x+t)\right)\mu(dt)
    %\frac{\alpha}{\Gamma(1-\alpha)}\int_{0+}^\infty \frac{u(x)-u(x+t)}{t^{1+\alpha}}\,dt
    %=& -b\cdot \frac d{dx}u(x)+\int_{x}^\infty \left(u(x)-u(t)\right)\,\mu(dt)
   % = \frac{\alpha}{\Gamma(1-\alpha)}\int_{x}^\infty \frac{u(x)-u(t)}{(t-x)^{1+\alpha}}\,dt
   % =: \mad{x}{\infty}{f}u.
\end{align}
\end{definition}
\begin{definition}\label{mar-int-defin}%[gernalized Riemann--Liouville fractional integral]
Let $\phi:\real\to\real$ be a function which is defined on the whole real axis and  let $f$ be a Bernstein function given by \eqref{sub-bf-0}.  The \textbf{Riemann--Liouville fractional integral for the Bernstein function $f$} are the following operators
\begin{align}
\rli{0}{x}{f}\phi(x)=\int_0^x \phi(z)k(x-z)dz
\end{align}
where $k$ is defined by the following  Laplace transfrom $\Lscr[k;\lambda]=1/f(\lambda)$.
\end{definition}
By applying the extension idea, we can obtain an alternative expression of the Bernstein Riemann--Liouville fractional derivative using the Bernstein Marchaud fractional derivative.
\begin{definition}\label{ber-mar-def}
Let $\phi$ be a function on $[0,\infty)$ and $f$ any Bernstein function with characteristics $(b,\mu)$. The %(\textbf{backwards}) 
\textbf{Bernstein Riemann Liouville fractional derivatives}, %(\textbf{backwards}) 
%\textbf{Bernstein Caputo fractional derivatives} % and  (\textbf{backwards}) \textbf{Bernstein--Censored fractional derivatives}  
     are defined by
\begin{align}
    \rld{0}{x}{f}\phi
    &:=\mad{}{+}{f}\phi^0(x)= b\cdot \frac d{dx}\phi^0(x)+\int_0^\infty \left(\phi^0(x)-\phi^0(x-s)\right)\mu(ds),\label{bf-rl-def+}%\\
    %\cd{0}{x}{f}\phi
    % &: = \mad{}{+}{f}\phi^\sigma(x)=b\cdot \frac d{dx}\phi^{\sigma}(x)+ \int_0^\infty \left(\phi^\sigma(x)-\phi^\sigma(x-s)\right)\mu(ds)\label{bf-cap-def+}
      %\ced{0}{x}{f}\phi &:=  \mad{}{+}{f}\phi^s(x)=-b\cdot \frac d{dx}\phi(x)+\int_0^\infty\left(\phi^s(x)-\phi^s (x+t)\right)\mu(dt) .\label{bf-ce-def+}
\end{align}
%The corresponding forward fractional derivatives are given by
%\begin{equation}
    %\rld{x}{\infty}{f}\phi
    %=
    %\cd{x}{\infty}{f}\phi
    %:=
    %-b\cdot \frac d{dx}\phi(x)+\int_0^\infty\left(\phi(x)-\phi(x+s)\right)\mu(ds)
    %=  \mad{}{-}{f}\phi(x).\label{bf-rl-def-}
%\end{equation}
\end{definition}
\begin{remark}%[\bfseries and Definition]
\label{ber-def-rei}
Let $\phi$ be a function on $[0,\infty)$ and $f$ be a Bernstein function with characteristics $(b,\mu)$. The Bernstein Riemann--Liouville fractional derivatives, %Bernstein Caputo fractional derivatives %and Bernstein--Censored fractional derivatives 
are
\begin{align}
    \rld{0}{x}{f}\phi &= b \cdot \frac d{dx}\phi(x)+ \frac d{dx} \int_0^x \phi(t)\,\bar\mu(x-t)\,dt, \label{bf-fra-rl+}%\\
    % \cd{0}{x}{f}\phi & = b \cdot \frac d{dx}\phi(x) + \int_0^x \phi'(t)\bar\mu(x-t)\,dt\label{bf-fra-cap+},
%\\\ced{0}{x}{f}\phi & =\rld{0}{x}{f}\phi(x)-\phi(x)\bar\mu(x)\label{bf-fra-cen} 
\end{align}
 where $\bar\mu(s):=\mu(s, \infty)$. 
 %For the corresponding forward fractional derivative one has
% \begin{align}\label{bf-fra-rl-}
%     \rld{x}{\infty}{f}\phi = \cd{x}{\infty}{f}\phi
%     = -b \cdot \frac d{dx}\phi(x)-\frac d{dx} \int_x^{\infty} \phi(t)\,\bar\mu(t-x)\,dt.
% \end{align}
\end{remark}

 Next we introduce a special Volterra equation first kind of Sonine equation, whose solutions are known as the Sonine pairs , given by equation \eqref{son-equ}. The formal solution to such equations was discovered a long time ago and has been extensively studied, see Samko, Kilbas, and Marichev \cite{1993_Samko}. %The research on such kernels dates back to the 19th century. %and has been the subject of research by Samko and Cardoso \cite{2002_Samko}, Kochubei \cite{2011_Kochubei} and Hanyga \cite{2020_Hanyga}.
\begin{definition}\label{son-p-3-1}  Let $h,g \in L^1_{\mathrm{loc}}(0, \infty)$. We call $h, g$ a \textbf{positive Sonine pair} if 
\begin{equation}\label{son-equ}
h*g(x)=1, \quad \forall x>0.
\end{equation}
where $h, g \geq 0$ are  either both functions or, by abuse of notation, one of them is  a positive function on $(0, \infty)$ and the  other  is a positive measure on $(0, \infty)$. 
\end{definition}
 
% A generalized Riemann--Liouville fractional derivative defined by Remark \ref{ber-def-rei}, denoted by $\rld{0}{\cdot}{f}$, and an associated generalized fractional integral, denoted by $\rli{0}{\cdot}{f}$ with corresponding kernels $\bar{\mu}$ and $k$, are shown to be a Sonine pair, as given by, 
% \begin{equation}
% (k*\bar{\mu})(x)=\int_0^x k(x-s)\bar{\mu}(s)ds=1, x>0.
%\end{equation}
It is easy to see that a  generalized Riemann--Liouville fractional derivative defined by  $\rld{0}{\cdot}{f}$ (see Remark \ref{ber-def-rei})  and the associated generalized fractional integral $\rli{0}{\cdot}{f}$ (see Definition \ref{mar-int-defin}) are given by kernels $\bar\mu$ and $k$ which are a Sonine pair (see Appendix Theorem A.1)

If we consider $k(x)=\frac{x^{\alpha-1}}{\Gamma(\alpha)}$ and $\bar{\mu}(x)=\frac{x^{\alpha}}{\Gamma(1-\alpha)}$, then the $\alpha$-order fractional integral and derivative can be defined, and their convolution is equal to 1, as  in eq.  \eqref{son-equ}.

The connection between complete Bernstein functions and Sonine pairs  have been
 noticed by \cite{2011_Kochubei}, %but can be useful due to the availability of tables of complete Bernstein functions  \cite{2012_Schilling} 
and there are significant applications due to the “nonlinear” properties of complete Bernstein functions, for details and examples of Sonine pairs see \cite{2023_Li}. 

% If $f\in \mathcal{BF}$, then $\frac{f(\lambda)}{\lambda}\in\mathcal{CM}$  and $\frac1{f(\lambda)}\in \mathcal{CM}$, i.e. there exist measures $\nu$ and $\kappa$ on $[0, \infty)$ with 
% $$
% \frac1{\lambda}=\frac{f(\lambda)}{\lambda}\frac1{f(\lambda)}=\Lscr(\nu;\lambda)\Lscr(\kappa,\lambda)=\Lscr(\nu*\kappa, \lambda).
% $$

% \noindent Thus, $\nu*\kappa(dx)=dx$ i.e $\nu, \kappa$ are a Sonine pair if one (or both) of the measures are absolutely continuous w.r.t. Lebesgue measure. % From Theorem \ref{pot-rep-sub} 
%We know that  $\kappa$ is the potential or renewal measure of the subordinator with Laplace exponent $f\in \mathcal{BF}$, the following renewal theorem gives some first information.
%\begin{theorem}
There is a deep one--to--one correspondence between Sonine pairs and so--called special Bernstein functions -- but this is a highly theoretical result, since we know  not much about special Bernstein functions cf. \cite{2012_Schilling}. Therefore, we use the more benign class of complete Bernstein functions.  Thus, we will investigate the connection between complete Bernstein functions and Sonine pairs  in the appendix.
 
Throughout this work, we always make the following Assumption 1.
 \begin{assumption}\label{ass-a}
\begin{enumerate}[(1)]
\item $f$ is  a complete Bernstein function with characteristics $(0, \mu)$, i.e.  the L\'{e}vy measure $\mu$ has a completely monotone density $m$ with respect to the Lebesgue measure.% so that  \eqref{bef-cen-sub} takes  the form \eqref{bern-fun-den}.
\item  %The Laplace transform of \eqref{bf-pot}, \eqref{pot-eq} exists for all $\lambda>0$. Using Theorem \ref{cbf-equ-sti}, the function $f(\lambda)/\lambda$ belongs to Stieltjes class $\mathcal{S}$, and 
The following limiting relations are here
%Let \eqref{s-inf-3-4} and \eqref{s-inf-fun} hold %from Remark \ref{la-3-1-6}  
% hold. \begin{align}
%  a=\lim_{\lambda\to0}f(\lambda), \quad a^\star=\lim_{\lambda\to0}f^\star(\lambda);
%  \\b=\lim_{\lambda\to0}\frac{f(\lambda)}{\lambda}, \quad b^\star=\lim_{\lambda\to0}\frac{f^\star(\lambda)}{\lambda}
%  \end{align}
%  and since $1/f(\lambda)=f^\star(\lambda)/\lambda$, it is easy to see that the conditions 
\begin{align}
\lim_{\lambda\to0}\frac1{f(\lambda)}= \infty, \quad \lim_{\lambda\to \infty}\frac1{f(\lambda)}\to 0 \label{s-inf-3-4};
\\\lim_{\lambda\to0}\frac{\lambda}{f(\lambda)}=0, \quad \lim_{\lambda\to \infty}\frac{\lambda}{f(\lambda)}\to\infty \label{s-inf-fun}.
\end{align}
 \end{enumerate}
 \end{assumption}
 %\noindent From now on  
Under these assumptions, we have the following Laplace transform: 
\begin{equation}\label{bf-pot}
\frac{f(\lambda)}{\lambda}=\int_0^\infty e^{-\lambda x}\bar{\mu}(x)\,dx
\end{equation}
and 
\begin{equation}\label{pot-eq}
\frac1{f(\lambda)}=\int_0^\infty e^{-\lambda x}k(x)\,dx, 
\end{equation}
where $k$ is the potential  density and $\bar\mu(s):=\mu(s, \infty)$. %If $U(dx)=k(x)\,dx$, we then see that $U(x)=\Ee(E(x))$ by Theorem \ref{pot-rep-sub}. 
%\section{Sonine pair}

Now let us define the following function spaces:

$$C_{\bar{\mu}}(0,T]=\left\{\varphi(x) \in C (0, T]\cap L^1(0,T]: (\bar{\mu}*\varphi)(x)\in C^1(0,T]\right\},$$ 
$$C_{\bar{\mu}}[0,T]=C[0,T]\cap C_{\bar{\mu}}(0,T].$$\label{spa-re-eq}

\noindent If we take the kernel $\bar{\mu}(x)=\frac{x^{-\beta}}{\Gamma(1-\beta)}$ the above space is the same  as  $C_\beta(0,T]$ in  Du, Toniazzi and Xu \cite{2021_Du}. 
Let us define the following Bernstein Riemann--Liouville fractional derivative for $\phi \in C_{\bar{\mu}}(0,T]$  and Bernstein Riemann--Liouville integral $\phi\in  C(0, T] \cap L^1(0,T]$,

\begin{equation}\label{brl-d}
\rld{0}{x}{f}\phi = \frac d{dx} \int_0^x \phi(s)\,\bar\mu(x-s)\,ds,
\end{equation}

\begin{equation}\label{brl-i}
\rli{0}{x}{f}\varphi=\int_0^x \phi (s)k(x-s)\,ds.
\end{equation}

\begin{theorem}\label{inv-berf-der} If $\phi\in C(0,T]\cap L^1(0,T]$, then $\rli{0}{x}{f}\phi \in C_{\bar{\mu}}(0,T]$ and $ \rld{0}{\cdot}{f}$ is the left inverse of $\rli{0}{\cdot}{f}$.
\end{theorem}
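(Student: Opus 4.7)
The plan is to verify the three defining conditions of the space $C_{\bar{\mu}}(0,T]$ for the function $\psi:=\rli{0}{\cdot}{f}\phi=k*\phi$, and then compute $\rld{0}{x}{f}\psi$ directly by exploiting the Sonine identity $\bar\mu * k \equiv 1$ (guaranteed by Assumption~\ref{ass-a} together with \eqref{bf-pot}--\eqref{pot-eq} and Theorem~A.1).

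First I would check that $k\in L^1_{\mathrm{loc}}(0,\infty)$. Since $k\geq 0$ is the potential density and \eqref{pot-eq} gives $\int_0^\infty e^{-\lambda x} k(x)\,dx = 1/f(\lambda)<\infty$ for every $\lambda>0$, the monotone estimate $\int_0^T k(x)\,dx\leq e^{\lambda T}/f(\lambda)$ yields local integrability. From $\phi\in C(0,T]\cap L^1(0,T]$ and $k\in L^1_{\mathrm{loc}}$, standard convolution arguments (dominated convergence for continuity, Young's inequality / Tonelli for integrability) show that $\psi=k*\phi\in C(0,T]\cap L^1(0,T]$.

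The heart of the argument is the third condition: $\bar\mu * \psi \in C^1(0,T]$. Splitting $\phi=\phi^+-\phi^-$ to make all three factors nonnegative, Tonelli's theorem justifies the interchange in
\begin{align*}
(\bar\mu * \psi)(x) &= \int_0^x \bar\mu(x-y)\int_0^y k(y-s)\phi(s)\,ds\,dy\\
&= \int_0^x \phi(s)\int_0^{x-s} \bar\mu(x-s-u)k(u)\,du\,ds\\
&= \int_0^x \phi(s)\,(\bar\mu * k)(x-s)\,ds = \int_0^x \phi(s)\,ds,
\end{align*}
using the Sonine relation in the last equality. Since $\phi\in C(0,T]$, the fundamental theorem of calculus shows $\bar\mu * \psi \in C^1(0,T]$ with derivative $\phi$. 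This simultaneously confirms $\psi\in C_{\bar\mu}(0,T]$ and yields
\[
\rld{0}{x}{f}\psi = \frac{d}{dx}(\bar\mu * \psi)(x) = \phi(x),
\]
which is precisely the left-inverse property.

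The only delicate step is the Fubini/Tonelli interchange and the associativity $\bar\mu *(k*\phi)=(\bar\mu *k)*\phi$; this is where one must invoke the nonnegativity of $\bar\mu$ and $k$ together with $\phi\in L^1(0,T]$ to push through the computation without regularity assumptions beyond continuity. Everything else is routine once the Sonine identity is in hand.
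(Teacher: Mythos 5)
Your proposal is correct and follows essentially the same route as the paper: you establish $\rli{0}{x}{f}\phi\in C(0,T]\cap L^1(0,T]$, then use Fubini/Tonelli together with the Sonine identity $\bar\mu*k\equiv 1$ to get $\bar\mu*(k*\phi)(x)=\int_0^x\phi(s)\,ds$, and differentiate to obtain both the membership in $C_{\bar\mu}(0,T]$ and the left-inverse property. The only difference is cosmetic: the paper proves continuity of $k*\phi$ via an explicit truncation and uniform-limit argument rather than your appeal to dominated convergence, and it takes the local integrability of $k$ for granted where you derive it from \eqref{pot-eq}.
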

\begin{proof} For the first claim, we need to show $\rli{0}{x}{f} \phi  \in C(0, T]\cap L^1(0,T]$ and \linebreak $(\bar{\mu}*\rli{0}{\cdot}{f})(x)\in C^1(0,T]$.
Firstly we show that $\rli{0}{x}{f}\phi \in C(0, T]\cap L^1(0,T]$. Since  $\phi$ and $k$ are in $C(0, T]\cap L^1(0,T]$, $\rli{0}{x}{f}\phi$ is well--defined and finite. %Thus, symbol already has a meaning.  $\rlj{0}{x-\varepsilon}{f}$ define as the following operator.  
 Let $\varepsilon \in (0, x)$ % we define the following operator 
\begin{equation}
\rlj{0}{x-\varepsilon}{f}\phi=\int_0^{x-\varepsilon} \phi(s)k(x-s)\,ds,
\end{equation}
note  that $\rlj{0}{x-\varepsilon}{f}=\rli{0}{x-\varepsilon}{f}$.  Given $T_0\in (0,T]$, for all $x\in [T_0, T]$ and $\varepsilon\in (0, T_0)$, we have 
\begin{align*}
\left| \rlj{0}{x-\varepsilon}{f}\phi-\rli{0}{x}{f}\phi \right|&=\left|\int_{x-\varepsilon}^x \phi(s)k(x-s)\,ds \right|
\\&\leq \int_{x-\varepsilon}^x \left| \phi(s) k(r-s) \right| \,ds 
\\&\leq \|\phi\|_{C[T_0-\varepsilon, T]} \int_{x-\varepsilon}^x k(r-s) \,ds,
\end{align*}
Using the local integrability of $k$, we conclude that the above limit goes to 0 uniformly, as $\varepsilon$ goes to 0. Applying the dominated convergence theorem, $\rlj{0}{x-\varepsilon}{f}\phi$ is continuous on $[T_0, T]$. So,  being a uniform limit of continuous function, $\rli{0}{x}{f}\phi$ is continuous on $[T_0, T]$, and thus on $(0, T]$.    The integrability of $\rli{0}{x}{f}\phi$ follows by
\begin{align*}
\int_0^T \left|\rli{0}{x}{f}\phi\right|\,dr &\leq \int_0^T \int_0^x \left|\phi(r)\right| k(x-r)\,dr\,dx
\\&=\int_0^T \left|\phi(r)\right| \int_r^T  k(x-r) \,dx \,dr
\\&\leq \int_0^T  k(r) \,dr \int_0^T \left|\phi(r)\right|  \,dr
<\infty.
\end{align*}
$\rli{0}{\cdot}{f}\in C(0, T]\cap L^1(0, T]$, and $\frac{d}{dx}\bar{\mu}*\rli{0}{x}{f}\phi$ is well--defined on $(0, T]$. For $x\in (0, T]$,
\begin{equation*}\label{lef-in-410}
\begin{aligned}
\bar{\mu}*\rli{0}{x}{f}\phi&\pomu{Fubini}{=}{}\int_0^x\bar{\mu}(x-r)\rli{0}{r}{f}\phi(r)\,dr
%\\&\pomu{Fubini}{=}{}\int_0^x\bar{\mu}(x-r)\int_0^r \phi(s)k(r-s)\,ds\,dr
\\&\pomu{Fubini}{=}{}\int_0^x\phi(s)\int_s^x \bar{\mu}(x-r)k(r-s)\,dr\,ds
%\\&\pomu{r=s+t}{=}{}\int_0^x\phi(s)\int_0^{x-s} \bar{\mu}(x-s-t)k(t)\,dt\,ds
\\&\pomu{r=s+t}{=}{}\int_0^x\phi(s)\,ds
\end{aligned}
\end{equation*}
 then we get $(\bar{\mu}*\rli{0}{\cdot}{f})\phi\in C^1(0,T]$.  Using the definition of the Bernstein Riemann--Liouville fractional derivative Remark \ref{ber-def-rei} and  noting that  we assume $b=0$,  we get   $\rld{0}{x}{f}\rli{0}{x}{f}\phi=\frac{d}{dx}\left(\bar{\mu}*\rli{0}{x}{f}\phi\right)=\phi$,  if we take derivative on both sides of \eqref{lef-in-410}. 
%For the second claim:  {\cailing Note  that $\phi$ is not depended on $(T, \infty)$. Set $\phi_T=\phi$ when $x\leq T$ and $\phi_T=0$ when $x>T$.} we use the Laplace transform to prove $ \rld{0}{x}{f}$ is the left inverse of $\rli{0}{x}{f}$. 
%\begin{align*}
%\Lscr\left(\rld{0}{x}{f}(\rli{0}{x}{f})\phi, s\right)&~~~~\omu{\eqref{bf-fra-rl+}}{=}{}\Lscr\left(b\frac d{dx}\rli{0}{x}{f}\phi, s\right)+\Lscr\left(\frac d{dx} \int_0^x \rli{0}{t}{f}\phi(t)\,\bar\mu(x-t)\,dt, s\right)
%\\&~~~~\omu{\eqref{lap-der}}{=}{} bs\Lscr\left(\rli{0}{x}{f}\phi,s\right)+s\Lscr\left(\bar\mu, s\right)\Lscr\left( \rli{0}{t}{f}\phi, s\right)
%\\&\pomu{Definition \ref{mar-int-defin}}{=}{}s\Lscr\left( \rli{0}{x}{f}\phi, s\right)(b+\Lscr\left(\bar\mu, s\right))
%\\&~~~~\omu{\eqref{lap-bf}}{=}{}s \Lscr(k, s )\Lscr(\phi, s) s^{-1}f(s)
%\\&\omu{Definition \ref{mar-int-defin}}{=}{}s f(s)^{-1}\Lscr(\phi, s)s^{-1}f(s)
%\\&\pomu{Definition \ref{mar-int-defin}}{=}{}\Lscr(\phi, s)
%\end{align*}
%We can use the same idea to prove the $ \rld{0}{\cdot}{f}$ is the right inverse $\rli{0}{\cdot}{f}$.
\end{proof}
% If $\phi(0)=0$, by the similar strategy we can get $ \rld{0}{\cdot}{f}$ is the  right inverse of $\rli{0}{\cdot}{f}$.
\begin{theorem} \label{uni-riem-spac}
\begin{enumerate}[(i)]
\item If $\psi \in C(0, T]\cap L^1(0, T]$. Then $\phi= \rli{0}{x}{f}\psi$ if and only if $\phi\in C_{\bar{\mu}}(0,T],$ $\rld{0}{x}{f}\phi=\psi$ and $\lim_{x\to0} (\bar{\mu}*\phi)(x)=0$.
\item If $\phi  \in C_{\bar{\mu}}[0, T]$ and $\rld{0}{x}{f}\phi=0$, then $\phi=0$.
\end{enumerate}
\end{theorem}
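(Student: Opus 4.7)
My plan is to reduce both parts to Theorem \ref{inv-berf-der} by exploiting the Sonine-pair identity $(k*\bar\mu)(x)\equiv 1$, which follows from $\Lscr[k;\lambda]\Lscr[\bar\mu;\lambda]=\frac{1}{f(\lambda)}\cdot\frac{f(\lambda)}{\lambda}=\frac{1}{\lambda}$. For the ``only if'' direction of (i), Theorem \ref{inv-berf-der} immediately yields $\phi=\rli{0}{x}{f}\psi\in C_{\bar\mu}(0,T]$ with $\rld{0}{x}{f}\phi=\psi$; the Fubini computation embedded in that proof also shows $(\bar\mu*\rli{0}{\cdot}{f}\psi)(x)=\int_0^x \psi(s)\,ds$, which vanishes as $x\to 0$ since $\psi\in L^1(0,T]$.

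For the ``if'' direction, I would set $F(x):=(\bar\mu*\phi)(x)$. The hypothesis $\phi\in C_{\bar\mu}(0,T]$ gives $F\in C^1(0,T]$ with $F'(x)=\rld{0}{x}{f}\phi(x)=\psi(x)$, and together with the assumed $F(0+)=0$ this integrates to $F(x)=\int_0^x \psi(s)\,ds$. I would then convolve both sides with the Sonine partner $k$. On one hand, associativity and $k*\bar\mu=1$ give $(k*F)(x)=(k*\bar\mu*\phi)(x)=\int_0^x \phi(s)\,ds$. On the other, a Fubini swap produces
\[
(k*F)(x)=\int_0^x k(x-s)\int_0^s\psi(r)\,dr\,ds=\int_0^x \psi(r)\int_0^{x-r}k(u)\,du\,dr=\int_0^x \rli{0}{t}{f}\psi(t)\,dt.
\]
Equating the two expressions and differentiating---both integrands are continuous on $(0,T]$, the latter again by Theorem \ref{inv-berf-der}---delivers $\phi(x)=\rli{0}{x}{f}\psi(x)$.

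Part (ii) will then follow by applying (i) with $\psi=0$, once I verify $\lim_{x\to 0}(\bar\mu*\phi)(x)=0$ from the weaker hypothesis $\phi\in C_{\bar\mu}[0,T]$. Since $\phi$ is continuous on the compact interval $[0,T]$ it is bounded, so $\bigl|(\bar\mu*\phi)(x)\bigr|\le \|\phi\|_\infty \int_0^x \bar\mu(s)\,ds$. The Bernstein-function integrability condition $\int_0^\infty (1\wedge y)\,\mu(dy)<\infty$ is equivalent to $\int_0^1 \bar\mu(s)\,ds<\infty$, so the right-hand side tends to $0$ with $x$ by absolute continuity of the Lebesgue integral; part (i) then forces $\phi\equiv \rli{0}{x}{f}0=0$. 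The crux of the whole argument is the Sonine identity $k*\bar\mu=1$ in the ``if'' direction; the only real bookkeeping obstacle is justifying associativity of the triple convolution and the Fubini swaps, but nonnegativity and local integrability of $k$, $\bar\mu$, and $|\phi|$ (resp.\ $|\psi|$) make this routine via Tonelli.
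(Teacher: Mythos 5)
Your proposal is correct and follows essentially the same route as the paper: both directions rest on Theorem \ref{inv-berf-der} together with the Sonine identity $k*\bar\mu\equiv 1$, and part (ii) is reduced to (i) via the bound $|(\bar\mu*\phi)(x)|\le\|\phi\|_\infty\int_0^x\bar\mu(s)\,ds$. The only difference is cosmetic: in the ``if'' direction you integrate $\frac{d}{dx}(\bar\mu*\phi)=\psi$ from the boundary condition and then convolve with $k$, whereas the paper shows $\bar\mu*(\phi-\rli{0}{x}{f}\psi)$ is the zero constant and applies the left-inverse to the difference — the same mechanism in a slightly different order.
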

\begin{proof}
(i)   "$\Longrightarrow$"  Using the proof of Theorem \ref{inv-berf-der}, we have $\phi\in C_{\bar{\mu}}(0,T]$ and $\rld{0}{x}{f}\phi=\psi$. By assumptions we can easily get $\lim_{x\to0} (\bar{\mu}*\phi)(x)=0$.

 "$\Longleftarrow$" Using the results of Theorem \ref{inv-berf-der}, $ \rld{0}{\cdot}{f}$ is the  left inverse of $\rli{0}{\cdot}{f}$. Then we have $\rld{0}{x}{f}\phi=\psi=\rld{0}{x}{f} \rli{0}{x}{f}\psi$.  Consequently  $\rld{0}{x}{f}(\phi-\rli{0}{x}{f}\psi)=0$ and  $\bar{\mu}*(\phi-\rli{0}{x}{f}\psi)$ is a constant.  By \eqref{lef-in-410}, we know $\lim_{x\to0}\bar\mu*\rli{0}{x}{f}\psi=0$, and  $\lim_{x\to0} (\bar{\mu}*\phi)(x)=0$ by assumption. Therefore, $\bar{\mu}*(\phi-\rli{0}{x}{f}\psi)$  must be 0. Using the results of Theorem \ref{inv-berf-der}, $\rld{0}{\cdot}{f}$ is the left inverse of $\rli{0}{\cdot}{f}$.   We can conclude that 
\begin{align*}
\phi-\rli{0}{x}{f}\psi&\pomu{3}{=}{}\rld{0}{x}{f}\rli{0}{x}{f}[\phi-\rli{0}{x}{f}\psi]
\\& \pomu{3}{=}{}\frac{d}{dx}\left[\bar\mu*(\rli{0}{\cdot}{f}(\phi-\rli{0}{\cdot}{f}\psi))\right]
%\\&=\frac{d}{dx}\left[\bar\mu*(k*(\phi-\rli{0}{\cdot}{f}\psi))\right]
\\&\omu{(*)}{=}{}\frac{d}{dx}\left[k*(\bar\mu*(\phi-\rli{0}{\cdot}{f}\psi))\right]
\\&\pomu{3}{=}{}\frac{d}{dx}[k*0]
\\&\pomu{3}{=}{}0,
\end{align*}
In the equality  marked by (*), we use the associativity of convolution.
 Thus, We can obtain $\phi-\rli{0}{x}{f}\psi=0$.

(ii) For $\phi\in C_{\bar{\mu}}[0, T],  (\bar{\mu}*\phi)(x)\leq \|\phi\|_\infty \int_0^x\bar{\mu}(s)ds\to 0, \quad x\to 0$.
Using the results of (i), $\phi=\rli{0}{x}{f}0=0$.
\end{proof}
\section{Censored initial value problem}
 We now proceed to define the censored Bernstein fractional derivative, which is the primary focus of our study.
\begin{definition}\label{cen-bef-def}
We define the \textbf{censored Bernstein fractional derivative} of any $\varphi\in C_{\bar{\mu}}(0,T]$ as 
\begin{equation}
\ced{0}{x}{f}\varphi(x)=\rld{0}{x}{f}\varphi(x)-\varphi(x)\bar{\mu}(x), \quad x\in (0,T].
\end{equation}
\end{definition}
% For a smooth function $\varphi$ vanishing outside $\real_+$, the Bernstein Riemann-Liouville fractional derivative satisfies

% \begin{lemma}\label{dif-cen-def}  If $\phi\in C^1[0,T]$, then 
%  \begin{equation}\label{def-cen-diff}
%  \ced{0}{x}{f}\phi(x)=\int_0^x \left(\phi(x)-\phi(x-s)\right)\mu(ds)%=\int_0^x(\varphi(x)-\varphi(x-r))\bar{\mu}'(r)dr
%  \end{equation}
%  \end{lemma}
%  \begin{proof}
%  According to  Definition \ref{ber-mar-def}, 
% \begin{equation*}
% \rld{0}{x}{f}\phi(x)=\int_0^\infty \left(\phi^0(x)-\phi^0(x-s)\right)\mu(ds)
% \end{equation*}
% From here, we can get the censored fractional derivative is given by
% \begin{align*}
% \ced{0}{x}{f}\phi(x)&=\rld{0}{x}{f}\phi(x)-\phi(x)\bar{\mu}(x)
% \\&= \int_0^\infty \left(\phi^0(x)-\phi^0(x-s)\right)\mu(ds)-\phi(x)\bar{\mu}(x)
% \\&= \int_0^x \left(\phi(x)-\phi(x-s)\right)\mu(ds)+ \int_x^\infty \phi(x)\mu(ds)-\phi(x)\bar{\mu}(x)
% \\&=\int_0^x \left(\phi(x)-\phi(x-s)\right)\mu(ds)
% %\\&=\int_0^x(\varphi(x)-\varphi(x-r))\bar{\mu}'(r)dr
% \qedhere
% \end{align*}
% \end{proof}
% \begin{definition}\label{def-ker-jum}
% \end{definition}
We introduce an integral operator, related kernels, and a key condition essential for the convergence study of \eqref{3-19}. These components are fundamental to our analysis and will be discussed in detail. 
\begin{definition}\label{ker-def-1}
For $\phi\in C[0,T]$, we define 
\begin{equation} \label{ker-def}
\mathcal{K}\phi(x)=\left \{ \begin{aligned} & \int_0^xk_1(x,r)\phi(r)\,dr, & x>0;\\
&\phi(0),  &x=0.
\end{aligned}
\right.
\end{equation}
Furthermore $\mathcal{K}\phi(x)=\rli{0}{x}{f}[\bar{\mu}\phi]$ and $\mathcal{K}$ is a linear operator preserving positivity positivity(i.e.  $\mathcal{K} \phi\geq0$ if $\phi\geq0$). In addition $\mathcal{K}^j \phi(x)=\int_0^x k_j(x,r)\phi(r)\,dr$ by induction. 
where for $0<r<x$, $k_j(x, r)$ is the following kernel
\begin{equation} \label{kernel-def}
k_j(x,r)=\left \{ \begin{aligned}&\bar{\mu}(r)k(x-r), & j=1;\\
&\int_r^xk_1(x,s)k_{j-1}(s, r)\,ds, &j\geq2.
\end{aligned}
\right.
\end{equation}
Furthermore $\int_0^x k_j(x,r)\,dr=1$ by induction.
\end{definition}

\begin{lemma}\label{con-tra} Let Assumption \ref{ass-a} hold.   Let $k,\bar\mu$ be a Sonine pair and $\Lscr(\bar\mu, \lambda)=f(\lambda)/\lambda$, $\Lscr(k, \lambda)=1/f(\lambda)$.   The limit $q=\lim\limits_{x\to 0}\bar{\mu}(x)\int_0^x k(s)\,ds$ is equal to $1$ if, and only if,  the supremum $\sup_{x\in [0,T]}\bar{\mu}(x)\int_0^x k(s)\,ds $ is equal to $1$ for some $T>0$.
\end{lemma}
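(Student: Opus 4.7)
The plan is to establish two pointwise facts about $g(x) := \bar{\mu}(x)\int_0^x k(s)\,ds$ on $(0,T]$, namely the universal bound $g(x) \le 1$ and the strict inequality $g(x) < 1$, and then to use continuity together with compactness of $[0,T]$ to conclude. Since $g$ stays strictly below $1$ in the interior, a supremum equal to $1$ can only be approached at the boundary $x = 0$; hence the supremum condition and the limit condition are equivalent.

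For the upper bound, I would apply the Sonine identity $(\bar{\mu}*k)(x) = 1$, i.e.\ $\int_0^x \bar{\mu}(x-r) k(r)\,dr = 1$, and use that the tail $\bar{\mu}$ is non-increasing: $\bar{\mu}(x-r) \ge \bar{\mu}(x)$ for $r \in [0,x]$. Pulling $\bar{\mu}(x)$ out of the integral yields
\[
1 \;=\; \int_0^x \bar{\mu}(x-r) k(r)\,dr \;\ge\; \bar{\mu}(x)\int_0^x k(r)\,dr \;=\; g(x).
\]
The strict inequality on $(0,T]$ then follows from Assumption 1: since $m$ is completely monotone and nontrivial, it is strictly positive on $(0,\infty)$, so $\bar{\mu}$ is strictly decreasing; and $k$, being the inverse Laplace transform of the Stieltjes function $1/f$, is strictly positive on $(0,\infty)$. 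Hence the rewriting
\[
1 - g(x) = \int_0^x [\bar{\mu}(x-r) - \bar{\mu}(x)]\,k(r)\,dr
\]
has a pointwise positive integrand on $(0,x)$, giving $g(x) < 1$ for every $x>0$.

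The $(\Rightarrow)$ direction is now immediate: $g \le 1$ gives $\sup \le 1$, and $g(x) \to 1$ as $x\to 0$ gives $\sup \ge 1$. For $(\Leftarrow)$, both $\bar{\mu}$ and $x\mapsto\int_0^x k(s)\,ds$ are continuous on $(0,T]$, so $g$ is; if one sets $g(0) := q$, then $g$ is a continuous function on the compact interval $[0,T]$ that is bounded by $1$ and strictly less than $1$ on $(0,T]$. Therefore $\sup_{[0,T]} g = 1$ forces the value $1$ to be taken at $x = 0$, i.e.\ $q = 1$. The one subtle point is the implicit existence of the limit $q$; without presupposing it, the same compactness--continuity argument (any $x_n \in (0,T]$ with $g(x_n) \to 1$ has subsequential limits in $\{0\}$ by the strict inequality) gives only $\limsup_{x\to 0} g(x) = 1$, and an additional regularity argument---for instance monotonicity of $g$, which holds in the pure power case $f(\lambda) = \lambda^{\alpha}$---would be needed to upgrade this to a genuine limit. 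This upgrade is the main potential obstacle, though in the lemma's phrasing $q$ is treated as a well-defined limit, so the continuous-function-on-a-compact-interval argument suffices.
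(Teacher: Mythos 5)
Your proof is correct and follows the same overall architecture as the paper's: the bound $\bar{\mu}(x)\int_0^x k(s)\,ds\le 1$ via monotonicity of $\bar\mu$ and the Sonine identity, exclusion of the value $1$ at points $x>0$, and then the supremum/limit bookkeeping. The one place you genuinely diverge is the strictness step. You invoke strict positivity of the completely monotone density $m$ (hence strict decrease of $\bar\mu$) and strict positivity of $k$; both facts are true under Assumption 1, since a nontrivial completely monotone function cannot vanish at any point, though your justification for $k>0$ ("inverse Laplace transform of a Stieltjes function") deserves the extra line that under Assumption 1 the constants $a^\star,b^\star$ vanish so $k$ itself is completely monotone and nontrivial. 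The paper instead argues by contradiction using only that $\bar\mu$ is non-increasing: if $\bar{\mu}(x_0)\int_0^{x_0}k(s)\,ds=1$, then $\int_0^{x_0}\bigl(\bar{\mu}(x_0)-\bar{\mu}(s)\bigr)k(x_0-s)\,ds=0$ forces $\bar\mu$ to be constant on $[0,x_0]$, whence the Sonine equation makes $\int_0^{\cdot}k(s)\,ds$ constant and $k\equiv 0$ there, contradicting $\bar\mu * k=1$. The paper's variant is marginally more economical (no positivity of $m$ or $k$ is needed), yours is more direct; both are valid. Your closing caveat about the existence of the limit $q$ is apt: the lemma (and the paper's proof) treats $q$ as a well-defined limit, and without that hypothesis one indeed only obtains $\limsup_{x\to 0}$; the paper also leaves the final step --- that a supremum equal to $1$ can then only come from the boundary $x=0$ --- implicit, which you spell out via continuity and compactness.
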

\begin{proof} According to Assumption \ref{ass-a}, we have that $\bar{\mu},k \in \mathcal{CM}$ and  $\bar{\mu}, k$ are decreasing. Thus, 
\begin{align*}
\bar{\mu}(x)\int_0^x k(s)\,ds&=\bar{\mu}(x)\int_0^x k(x-s)\,ds
\\&\leq \int_0^x \bar{\mu}(s)k(x-s)\,ds
\\&=1
\end{align*}
Then we have $q \leq 1$.  Assume that for every $T>0$,  there is some some $x_0\in [0, T]$ such that $\bar{\mu}(x_0)\int_0^{x_0} k(s)\,ds=1$. Since  $\int_0^{x_0}\bar{\mu}(s) k(x_0-s)\,ds=1$, we have 
\begin{align*}
 0&=\bar{\mu}(x_0)\int_0^{x_0} k(s)\,ds-\int_0^{x_0}\bar{\mu}(s) k(x_0-s)\,ds
 \\&= \int_0^{x_0} \bar{\mu}(x_0)k(x_0-s)\,ds-\int_0^{x_0}\bar{\mu}(s) k(x_0-s)\,ds
 \\&= \int_0^{x_0} (\bar{\mu}(x_0)-\bar{\mu}(s)) k(x_0-s)\,ds.
\end{align*}
This  means that  $\bar{\mu}$ is constant  in $[0, x_0]$ since $\bar\mu$ is decreasing. Thus  for all $x\in[0, x_0]$  we see from the Sonine equation that $\int_0^{x}k(s)\,ds $  is  constant on $[0, x_0]$.  Therefore,  $k=0$ on $[0,x_0]$. This is a contradiction to $\bar{\mu}, k$ being a Sonine pair. %On $[0, T_1], c=\sup_{x\in [0,T_1]}\bar{\mu}(x)\int_0^x k(s)ds<1$. 
\end{proof}
We are now going to establish the crucial bound and continuity properties necessary for adapting our approach to the censored initial value problem
\eqref{ivp-15}.
\begin{lemma}\label{ope-bou}
Let Assumption \ref{ass-a}  and $q=\lim\limits_{x\to 0}\bar{\mu}(x)\int_0^x k(s)\,ds<1$  hold true.  Let $k,\bar\mu$ be  a Sonine pair and $\Lscr(\bar\mu, \lambda)=f(\lambda)/\lambda, \quad \Lscr(k,\lambda)=1/f(\lambda)$.   Assume that  $\varphi \in C[0,T]$ and $ |\varphi(x)|\leq M\int_0^x k(s)\,ds$, where $M$ is a positive constant,   then for all $x\in [0,T]$, $\mathcal{K}\varphi(x)\leq M q \int_0^x k(s)\,ds$. Furthermore, $ |\mathcal{K}^j \varphi(x)|\leq M q^j \int_0^x k(s)\,ds$, $x\in [0,T]$  and $\mathcal{K}\varphi(x)\in C[0,T]$.
\end{lemma}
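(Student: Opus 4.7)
The plan is to exploit the integral-kernel form $\mathcal{K}\varphi(x) = \int_0^x k(x-r)\bar\mu(r)\varphi(r)\,dr$ (from Definition \ref{ker-def-1}, since $k_1(x,r)=\bar\mu(r)k(x-r)$), substitute the pointwise hypothesis, and reduce the whole argument to a uniform bound on the Sonine product $g(r):=\bar\mu(r)K(r)$, where $K(r):=\int_0^r k(s)\,ds$.

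First I plug in $|\varphi(r)|\leq MK(r)$ and use positivity of $k$ and $\bar\mu$ to obtain
\[
|\mathcal{K}\varphi(x)|\leq M\int_0^x k(x-r)\bar\mu(r)K(r)\,dr = M\int_0^x k(x-r)g(r)\,dr.
\]
The heart of the argument is then to show that $g(r)\leq q$ for every $r\in[0,T]$; once this is available, replacing $g(r)$ by $q$ under the integral gives $|\mathcal{K}\varphi(x)|\leq Mq\int_0^x k(x-r)\,dr = MqK(x)$, which is the first claim. This pointwise bound on $g$ is the main obstacle. My plan is to invoke Lemma \ref{con-tra} together with the complete-monotonicity structure (Assumption \ref{ass-a}) to deduce that the supremum of $g$ on $[0,T]$ is the boundary value $q$: indeed, the argument in Lemma \ref{con-tra} shows that $g(x_0)=1$ forces $\bar\mu$ to be constant on $[0,x_0]$ (contradicting Sonine); a direct adaptation of the same identity $1 - g(x_0) = \int_0^{x_0}[\bar\mu(s)-\bar\mu(x_0)]\,k(x_0-s)\,ds$, combined with the fact that $\bar\mu,k$ are strictly decreasing under Assumption \ref{ass-a}, pins $\sup_{[0,T]}g$ to its boundary value $q=\lim_{x\to 0^+}g(x)$.

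The iterated estimate then follows by induction. If $|\mathcal{K}^j\varphi(x)|\leq Mq^jK(x)$, the hypothesis of the base case is met by $\mathcal{K}^j\varphi$ with $Mq^j$ playing the role of $M$, so one further application of the base estimate yields $|\mathcal{K}^{j+1}\varphi(x)|\leq Mq^{j+1}K(x)$.

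For continuity on $[0,T]$, observe that $\bar\mu\varphi$ is continuous on $(0,T]$ and bounded there by $Mg\leq M$, so $\bar\mu\varphi\in C(0,T]\cap L^1(0,T]$. Theorem \ref{inv-berf-der} applied to $\mathcal{K}\varphi=\rli{0}{\cdot}{f}[\bar\mu\varphi]$ then yields continuity on $(0,T]$. At the left endpoint, the hypothesis forces $\varphi(0)\leq MK(0)=0$, hence $\mathcal{K}\varphi(0)=\varphi(0)=0$ by definition, while the bound $|\mathcal{K}\varphi(x)|\leq MqK(x)\to 0$ as $x\downarrow 0$ matches this value and closes continuity at $0$.
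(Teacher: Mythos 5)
Your overall architecture matches the paper's: write $\mathcal{K}\varphi(x)=\int_0^x k(x-r)\bar\mu(r)\varphi(r)\,dr$, insert $|\varphi(r)|\le M\int_0^r k$, reduce everything to a bound on $g(r):=\bar\mu(r)\int_0^r k(s)\,ds$, and then iterate to get the $q^j$ estimate. The one place you genuinely deviate is the continuity claim, and there your route is a legitimate shortcut: since $\bar\mu\varphi\in C(0,T]$ and is bounded by $Mg\le M$ (the bound $g\le 1$ is the rigorous part of Lemma \ref{con-tra}), you can indeed quote Theorem \ref{inv-berf-der} for continuity of $\rli{0}{\cdot}{f}[\bar\mu\varphi]=\mathcal{K}\varphi$ on $(0,T]$, and at the origin your hypothesis forces $\varphi(0)=0$ so that $\mathcal{K}\varphi(0)=0$ and $|\mathcal{K}\varphi(x)|\le M\int_0^x k\to 0$ closes it; the paper instead proves continuity by hand (the splitting into the terms I--VI), which is longer but does not need the extra hypothesis on $\varphi$ at the endpoint. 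The induction step is the same as the paper's.

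The genuine gap is the step you yourself call the heart of the argument: the claim $g(r)\le q$ for all $r\in[0,T]$, i.e. $\sup_{[0,T]}g=\lim_{x\to 0^+}g(x)$. The identity you propose to adapt, $1-g(x_0)=\int_0^{x_0}\bigl(\bar\mu(s)-\bar\mu(x_0)\bigr)k(x_0-s)\,ds$, only shows that $g(x_0)<1$ strictly (this is exactly the computation in Lemma \ref{con-tra}); it compares $g(x_0)$ with $1$, not with $q$, and no monotonicity of $g$ is available: $g$ is the product of the decreasing function $\bar\mu$ and the increasing function $\int_0^{\cdot}k$, and nothing in Assumption \ref{ass-a} prevents $g$ from rising above its boundary value $q$ on the interior of $[0,T]$ (e.g. when $f$ has different power-type behaviour at $0$ and at $\infty$, $g$ tends to different constants at $x\to 0$ and $x\to\infty$, and the one at infinity may be the larger). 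So "strictly decreasing $\bar\mu$, $k$" does not pin the supremum to the boundary, and your sketch does not deliver the inequality $|\mathcal{K}\varphi(x)|\le Mq\int_0^x k$. To be fair, the paper's own proof makes exactly the same leap — the line $\sup_{r\le x}\bar\mu(r)\int_0^r k(s)\,ds=q$ is asserted with a bare reference to Lemma \ref{con-tra}, which only treats the value $1$ — so you have faithfully reproduced the paper's argument, including its weakest step; but as a self-contained proof the central estimate remains unproven, and any honest write-up would have to either prove $\sup_{[0,T]}g=q$ under additional hypotheses or replace $q$ by $\sup_{[0,T]}g$ throughout and require that supremum to be $<1$.
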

\begin{proof} According to Assumption \ref{ass-a}, %and from Lemma \ref{ass-con}, 
we have that $\bar{\mu},k$ are completely montone functions and  $\bar{\mu}, k$ are decreasing.
For the first claim, we have \begin{align*}
|\mathcal{K}\varphi(x)|&=\left|\int_0^x \bar{\mu}(r) k(x-r) \varphi(r)\,dr\right|
%\\& = \left|\int_0^x \bar{\mu}(r) k(x-r) \varphi(r)\,dr\right|
\\&\leq  \sup_{r\leq x} \varphi(r) \bar{\mu}(r)\int_0^x  k(s)\,ds
%\\&\leq 
\\&\leq M \int_0^x k(s)\,ds \sup_{r\leq x} \bar{\mu}(r)\int_0^r  k(s)\,ds
\\&= M q \int_0^x k(s)\,ds,
\end{align*}
\noindent where we use the condition $|\varphi(x)|\leq M\int_0^x k(s)\,ds$ and $q=\lim\limits_{x\to 0}\bar{\mu}(x)\int_0^x k(s)\,ds$  for the estimate, see  Lemma \ref{con-tra}. 
For the second claim,  we use  the first claim and iterate it,
\begin{align*}
|\mathcal{K}^j \varphi(x)|&\omu{\eqref{ker-def}}{=}{} \left|\mathcal{K}^{j-1} \mathcal{K}\int_0^x\bar{\mu}(r) k(x-r) \varphi(r)\,dr\right|
\\&\pomu{\eqref{ker-def}}{\leq}{} M q \mathcal{K}^{j-1} \int_0^x k(r) \,dr
%\\&\pomu{\eqref{ker-def}}{=}{} M q \mathcal{K}^{j-2}  \mathcal{K} \int_0^x k(r) \,dr
%\\&\pomu{\eqref{ker-def}}{=}{}M q \mathcal{K}^{j-2} \int_0^x k(x-s) \bar{\mu}(s)\int_0^s k(r) \,dr\,ds
\\&\pomu{\eqref{ker-def}}{\leq}{} M q^2 \mathcal{K}^{j-2} \int_0^x k(x-s) \,ds
%\\&\pomu{\eqref{ker-def}}{=}{}  M q^2 \mathcal{K}^{j-2} \int_0^x k(s) \,ds
%\\&\pomu{\eqref{ker-def}}{=}{}...
\\&\pomu{\eqref{ker-def}}{\leq}{} M q^j \int_0^x k(s) \,ds. 
\end{align*}
This finishes  the  proof of the second claim.

Now we prove $\mathcal{K}\varphi$ is continuous in $[0,T]$. The first case is if $x=0$, we show ${\displaystyle \lim_{x\to 0}\mathcal{K}\varphi(x)=\varphi(0)}$.
\begin{align*}
|\mathcal{K}\varphi(x)-\varphi(0)|&\omu{\eqref{ker-def}}{=}{}\left|\int_0^x\bar{\mu}(r) k(x-r)(\varphi(r)-\varphi(0))\,dr\right|
%\\&\omu{\eqref{kernel-def}}{=}{}|\int_0^x\bar{\mu}(r)k(x-r)(\phi(r)-\phi(0))dr|
\\&\pomu{\eqref{kernel-def}}{\leq}{} \int_0^x\bar{\mu}(r) k(x-r) \left|\varphi(r)-\varphi(0)\right|\,dr
\\&\pomu{\eqref{kernel-def}}{<}{} \varepsilon.
\end{align*}
The last inequality holds due to  the  continuity of $\varphi$ at 0.
For the second case, assume $x>0$ and $x>y$;   we prove $\mathcal{K}\varphi$ is continuous in $(0, T]$.    For all $\delta>0$, we consider $C[\delta, T]$
\begin{align*}
|\mathcal{K}\varphi(x)-\mathcal{K}\varphi(y)|&\omu{\eqref{ker-def}}{=}{}\left|\int_0^x\bar{\mu}(r) k(x-r) \varphi(r)\,dr-\int_0^y \bar{\mu}(r) k(y-r)\varphi(r)\,dr\right|
%\\&\omu{\eqref{kernel-def}}{=}{}|\int_0^x\bar{\mu}(r)k(x-r)(\phi(r)-\phi(0))dr|
%\\&\pomu{\eqref{kernel-def}}{=}{}\left|\int_0^x\bar{\mu}(x-r) k(r) \varphi(x-r)\,dr-\int_0^y \bar{\mu}(y-r) k(r)\varphi(y-r)\,dr\right|
\\&\pomu{\eqref{kernel-def}}{\leq}{} \left|\int_0^y\bar{\mu}(x-r) k(r) \varphi(x-r)\,dr-\int_0^y \bar{\mu}(y-r) k(r)\varphi(y-r)\,dr\right| 
\\& \quad \pomu{\eqref{kernel-def}}{+}{}  \left|\int_y^x\bar{\mu}(x-r) k(r) \varphi(x-r)\,dr\right|
= \text{I+II}.
\end{align*} 
For the term II, due to $\varphi\in C[0, T]$,   $k$ being  finite on $[\delta, T]$ and  $\bar{\mu}$ being locally integrable,  the term  II goes to 0,  as  $x$ goes to $y$. 
For Part I,  we have the following estimate, 
\begin{align*}
 &\left|\int_0^y\bar{\mu}(x-r) k(r) \varphi(x-r)\,dr-\int_0^y \bar{\mu}(y-r) k(r)\varphi(y-r)\,dr\right|%\\&=
%\\&=\bigg|\int_0^y\bar{\mu}(x-r) k(r) \varphi(x-r)\,dr-\int_0^y \bar{\mu}(y-r) k(r)\varphi(x-r)\,dr
\\&\quad+ \int_0^y \bar{\mu}(y-r) k(r)\varphi(x-r)\,dr-\int_0^y \bar{\mu}(y-r) k(r)\varphi(y-r)\,dr\bigg| 
\\&\leq  \bigg|\int_0^y\bar{\mu}(x-r) k(r) \varphi(x-r)\,dr-\int_0^y \bar{\mu}(y-r) k(r)\varphi(x-r)\,dr\bigg|
\\&\quad +\bigg|\int_0^y \bar{\mu}(y-r) k(r)\varphi(x-r)\,dr-\int_0^y \bar{\mu}(y-r) k(r)\varphi(y-r)\,dr\bigg| 
\\&\leq \int_0^y \left| \bar{\mu}(x-r) -\bar{\mu}(y-r) \right| k(r) \varphi(x-r) \,dr\\&\quad+\int_0^y \left | \varphi(x-r)-\varphi(y-r) \right | \bar{\mu}(y-r) k(r) \,dr
\\&=\text{III+IV}
\end{align*}
  Due to $\varphi\in C[0, T]$, by Lebesgue's dominated convergence theorem, we obtain that the term IV goes to 0,  as $x$ goes to $y$. 
For the term III, we have for all $\varepsilon>0$, 
\begin{align*}
&\int_0^y \left| \bar{\mu}(x-r) -\bar{\mu}(y-r) \right| k(r) \varphi(x-r) \,dr
\\&= \int_0^{y-\varepsilon} \left| \bar{\mu}(x-r) -\bar{\mu}(y-r) \right| k(r) \varphi(x-r) \,dr
\\&\quad+ \int_{y-\varepsilon}^y \left| \bar{\mu}(x-r) -\bar{\mu}(y-r) \right| k(r) \varphi(x-r) \,dr
%\\&=\int_0^y \left| 1- \frac{\bar{\mu}(x-r)} {\bar{\mu}(y-r) }\right| \bar{\mu}(y-r)  k(r) \varphi(x-r) \,dr
\\& =\int_0^{y-\varepsilon} \left| 1- \frac{\bar{\mu}(x-r)} {\bar{\mu}(y-r) }\right| \bar{\mu}(y-r)  k(r) \varphi(x-r) \,dr
\\&\quad+ \int_{y-\varepsilon}^y \left| \bar{\mu}(x-r) -\bar{\mu}(y-r) \right| k(r) \varphi(x-r) \,dr
\\&=\text{V+VI}
\end{align*}
Due to $\varphi\in C[0, T]$ and $\bar{\mu}$ is completely monotone function, using Lebesgue's dominated convergence theorem, we can get that the term V tends to 0, as $x$ goes to $y$. Finally, for the term VI 
\begin{align*}
 &\int_{y-\varepsilon}^y \left| \bar{\mu}(x-r) -\bar{\mu}(y-r) \right| k(r) \varphi(x-r) \,dr
\\&\leq 2 \int_{y-\varepsilon}^y \bar{\mu}(y-r) k(r) \varphi(x-r) \,dr
\\&\leq 2\|\varphi\|_\infty  \int_{y-\varepsilon}^y \bar{\mu}(y-r) k(r) \,dr \xrightarrow[\varepsilon\to 0]{} 0.
\end{align*}
The limit of the end of the calculation holds since $\varphi\in C[0, T]$ and $k$  has no singularity (note that $y\neq 0$) and $\bar{\mu}$ is locally integrable. Thus, the term VI goes to 0. For the case $x<y$, we can use the same strategy. 
\end{proof}
\begin{lemma}\label{ser-cov-14} Let Assumption \ref{ass-a}  and $q=\lim\limits_{x\to 0}\bar{\mu}(x)\int_0^x k(s)\,ds<1$  hold true.  Let $k,\bar\mu$ be a Sonine pair and $\Lscr(\bar\mu, \lambda)=f(\lambda)/\lambda$,\quad $\Lscr(k, \lambda)=1/f(\lambda)$.  Assume that for all  $\varphi \in C[0,T]$  and  for all $x\in [0, T]$,  $ |\varphi(x)|\leq M\int_0^x k(s)\,ds$.  Then for all  $\phi\in C [0,T]$,  $ \sum_{j=1}^\infty \mathcal{K}^j  \varphi \in C[0,T]$ and for all $x\in [0, T]$, $$ |\sum_{j=1}^\infty \mathcal{K}^j\varphi (x) |\leq  \sum_{j=1}^\infty |\mathcal{K}^j \varphi(x)|\leq M \sum_{j=1}^\infty q^j  \int_0^x k(s)\,ds.$$ 
\end{lemma}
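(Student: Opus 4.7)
The plan is to observe that this is essentially a Weierstrass $M$-test argument built on top of the preceding Lemma \ref{ope-bou}. First I would obtain the pointwise bound on $|\mathcal{K}^j\varphi(x)|$ by a direct invocation of the second claim of Lemma \ref{ope-bou}, which gives $|\mathcal{K}^j\varphi(x)|\le Mq^j\int_0^x k(s)\,ds$ for every $x\in[0,T]$ and every $j\ge 1$. Summing in $j$ and using the triangle inequality then yields the desired chain of inequalities, with the geometric series $\sum q^j$ converging because $q<1$ by assumption.

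Next I would prove that the series $\sum_{j=1}^\infty \mathcal{K}^j\varphi$ lies in $C[0,T]$. The key observation is that $\int_0^x k(s)\,ds$ is finite on $[0,T]$, so setting $C:=\int_0^T k(s)\,ds<\infty$ I obtain the uniform estimate $\|\mathcal{K}^j\varphi\|_\infty \le MCq^j$. Since $\sum q^j<\infty$, the Weierstrass $M$-test yields that the partial sums $\sum_{j=1}^N \mathcal{K}^j\varphi$ converge uniformly on $[0,T]$.

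It remains to argue that each $\mathcal{K}^j\varphi$ is itself continuous on $[0,T]$; a uniform limit of continuous functions is then automatically continuous. I would proceed by induction on $j$. The base case $j=1$ is the last assertion of Lemma \ref{ope-bou}. For the inductive step, writing $\mathcal{K}^j\varphi=\mathcal{K}(\mathcal{K}^{j-1}\varphi)$, note that by the pointwise bound already established, $\mathcal{K}^{j-1}\varphi\in C[0,T]$ (by induction) and satisfies $|\mathcal{K}^{j-1}\varphi(x)|\le M'\int_0^x k(s)\,ds$ with $M':=Mq^{j-1}$. This is exactly the hypothesis required to apply Lemma \ref{ope-bou} to $\mathcal{K}^{j-1}\varphi$, giving continuity of $\mathcal{K}^j\varphi$ on $[0,T]$. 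The induction closes.

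The only mildly delicate point in this plan is verifying that the hypothesis of Lemma \ref{ope-bou} propagates under iteration, but this is automatic from the pointwise bound Lemma \ref{ope-bou} itself produces. No new estimates beyond those in Lemma \ref{ope-bou} and the finiteness of $\int_0^T k(s)\,ds$ are needed, so I do not expect any substantial obstacle.
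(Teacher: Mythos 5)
Your proposal is correct and follows essentially the same route as the paper's proof: both rest entirely on the bound $|\mathcal{K}^j\varphi(x)|\le Mq^j\int_0^x k(s)\,ds$ and the continuity statement from Lemma \ref{ope-bou}, followed by summing the geometric series and concluding continuity of the limit from uniform convergence. The only difference is cosmetic: you spell out by induction that each iterate $\mathcal{K}^j\varphi=\mathcal{K}(\mathcal{K}^{j-1}\varphi)$ is continuous because the hypothesis of Lemma \ref{ope-bou} propagates, a point the paper leaves implicit.
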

\begin{proof} By the proof of Lemma \ref{ope-bou}, $\mathcal{K}\varphi\in C[0,T]$ and $|\mathcal{K}^j\varphi(x)|\leq M q^j \int_0^x k(s) \,ds$. Summing over $j$, we get $$ \left|\sum_{j=1}^\infty \mathcal{K}^j \varphi(x)\right|\leq \sum_{j=1}^\infty\left|\mathcal{K}^j \varphi(x)\right|\leq M \sum_{j=1}^\infty q^j \int_0^x k(s) \,ds .$$  

\noindent The series on the right hand side is uniformly convergent for $x\in [0, T]$. Hence we get $ \sum_{j=1}^\infty \mathcal{K}^j\varphi(x) $ uniformly convergent to a limit in $C[0,T]$.
\end{proof}
 \begin{lemma}\label{es-rei-liu-15} Let Assumption \ref{ass-a}  and $q=\lim\limits_{x\to 0}\bar{\mu}(x)\int_0^x k(s)\,ds<1$  hold true.  Let $k,\bar\mu$ be a Sonine pair and $\Lscr(\bar\mu, \lambda)=f(\lambda)/\lambda,\quad \Lscr(k, \lambda)=1/f(\lambda)$.   Then $$|\rli{0}{x}{f}\varphi |\leq \|\varphi\|_\infty\int_0^x k(s)\,ds \text{\quad for all \quad } \varphi\in C[0, T]. $$
\end{lemma}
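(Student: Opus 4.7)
The plan is to apply the triangle inequality for integrals directly to the definition \eqref{brl-i} of the Bernstein Riemann--Liouville integral. Writing
\begin{equation*}
\rli{0}{x}{f}\varphi = \int_0^x \varphi(s)\,k(x-s)\,ds,
\end{equation*}
I would first bound $|\varphi(s)| \leq \|\varphi\|_\infty$ pointwise for $s\in [0,T]$ (which is finite since $\varphi \in C[0,T]$), pull this constant out of the integral, and then perform the substitution $u = x-s$ to rewrite $\int_0^x k(x-s)\,ds = \int_0^x k(u)\,du$. This immediately yields
\begin{equation*}
\left| \rli{0}{x}{f}\varphi \right| \leq \int_0^x |\varphi(s)|\, k(x-s)\,ds \leq \|\varphi\|_\infty \int_0^x k(s)\,ds,
\end{equation*}
which is the claimed estimate.

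There is really no obstacle here beyond checking that $\int_0^x k(s)\,ds$ is finite for $x\in[0,T]$; this follows from Assumption \ref{ass-a} together with the Laplace transform identity \eqref{pot-eq}, since $k$ is the potential density of a subordinator and is therefore locally integrable. No further ingredients such as the Sonine relation or the constant $q$ are needed for this particular bound — those hypotheses are stated for consistency with the surrounding lemmas but play no active role in the proof of this estimate.
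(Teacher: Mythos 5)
Your proposal is correct and is essentially the paper's own proof: both bound $|\varphi(s)|$ by $\|\varphi\|_\infty$, pull the constant out, and substitute $u=x-s$, with local integrability of $k$ (from Assumption \ref{ass-a}) guaranteeing finiteness. Your remark that the Sonine relation and the constant $q$ play no role in this particular estimate is also accurate.
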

\begin{proof}%[proof of lemma \ref{es-rei-liu-15} ] 
By the definition of the integral,  we can easily get the following results,
\begin{align*}
|\rli{0}{x}{f}\varphi|&=\int_0^x \varphi(s) k(x-s)\,ds
\\&\leq\int_0^x |\varphi(s)|k(x-s)\,ds
\\&\leq \|\varphi\|_\infty\int_0^x k(s)\,ds \qedhere
\end{align*}
\end{proof}
Next we begin with the censored initial value problem \eqref{ivp-15} with $g\in C[0,T]$ and $\phi(0)\in \real$.
\begin{equation}\label{ivp-15}
\rld{0}{x}{f}\phi=\left\{ \begin{aligned} & g(x)+\phi(x)\bar{\mu}(x), &\quad x>0;\\
&\phi(0), & x=0.
\end{aligned}
\right.
\end{equation}

\begin{theorem}\label{inv-cen-ope-16} Let Assumption \ref{ass-a}  and $q=\lim\limits_{x\to 0}\bar{\mu}(x)\int_0^x k(s)\,ds<1$  hold true.  Let $k,\bar\mu$ be a Sonine pair and $\Lscr(\bar\mu, \lambda)=f(\lambda)/\lambda, \quad \Lscr(k, \lambda)=1/f(\lambda)$. $\phi_0\in \real$,  $g\in C[0,T]$, then there exists a unique function $\phi \in C_{\bar{\mu}}[0,T]$ satisfying \eqref{ivp-15}, and it has the series representation 
\begin{equation}\label{3-19}
\phi(x)-\phi_0=\cei{0}{x}{f}g=\sum_{j=0}^\infty \mathcal{K}^j\rli{0}{x}{f} g.
\end{equation}
\end{theorem}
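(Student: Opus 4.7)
The plan is to convert the IVP \eqref{ivp-15} into a Volterra integral equation of the second kind and solve it by Neumann iteration. Since $\rld{0}{x}{f}\phi_0 = \phi_0\bar\mu(x)$ by Remark \ref{ber-def-rei} (with $b=0$), the substitution $\tilde\phi := \phi - \phi_0$ reduces \eqref{ivp-15} to
\[
\rld{0}{x}{f}\tilde\phi = g + \bar\mu\tilde\phi, \qquad \tilde\phi(0) = 0.
\]
Applying $\rli{0}{x}{f}$, which is the left inverse of $\rld{0}{x}{f}$ on functions whose boundary data vanish (Theorem \ref{uni-riem-spac}(i)), transforms this into the fixed-point equation $\tilde\phi = \rli{0}{x}{f}g + \mathcal{K}\tilde\phi$. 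Formal iteration yields the candidate $\tilde\phi = \sum_{j=0}^\infty \mathcal{K}^j \rli{0}{x}{f}g$, matching \eqref{3-19}.

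For existence I would verify that this series converges and defines a genuine solution. Lemma \ref{es-rei-liu-15} gives $|\rli{0}{x}{f}g(x)| \leq \|g\|_\infty\int_0^x k(s)\,ds$, so Lemmas \ref{ope-bou} and \ref{ser-cov-14} apply with $M = \|g\|_\infty$; the assumption $q<1$ delivers uniform convergence of the series in $C[0,T]$ together with the bound $|\tilde\phi(x)| \leq \tfrac{\|g\|_\infty}{1-q}\int_0^x k(s)\,ds$, which in particular forces $\tilde\phi(0) = 0$. Pushing $\mathcal{K}$ inside the sum (justified by uniform convergence and continuity of $\mathcal{K}$ on $C[0,T]$) produces the resolvent identity $\tilde\phi = \rli{0}{x}{f}[g + \bar\mu\tilde\phi]$. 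The above bound combined with the Sonine estimate $\bar\mu(x)\int_0^x k(s)\,ds \leq 1$ ensures $g+\bar\mu\tilde\phi \in C(0,T]\cap L^1(0,T]$, so Theorem \ref{inv-berf-der} returns $\rld{0}{x}{f}\tilde\phi = g+\bar\mu\tilde\phi$ and simultaneously places $\tilde\phi \in C_{\bar{\mu}}(0,T]$. Consequently $\phi := \tilde\phi + \phi_0$ belongs to $C_{\bar{\mu}}[0,T]$, satisfies $\phi(0) = \phi_0$, and recovers \eqref{ivp-15} after adding the identity $\rld{0}{x}{f}\phi_0 = \phi_0\bar\mu$.

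Uniqueness is the delicate step. If two solutions exist, their difference $v \in C_{\bar{\mu}}[0,T]$ satisfies $v(0) = 0$ and $\rld{0}{x}{f}v = \bar\mu v$; boundedness of $v$ and local integrability of $\bar\mu$ give $\lim_{x\to 0}(\bar\mu*v)(x) = 0$, so Theorem \ref{uni-riem-spac}(i) yields $v = \mathcal{K}v$, hence $v = \mathcal{K}^n v$ for every $n$. The Neumann machinery would instantly force $v \equiv 0$ once we know $|v(x)| \leq M\int_0^x k(s)\,ds$, because Lemma \ref{ope-bou} then gives $|v(x)| = |\mathcal{K}^n v(x)| \leq M q^n \int_0^x k(s)\,ds \to 0$. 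The main obstacle is promoting the crude bound $|v(x)| \leq \|v\|_\infty$ to this weighted one: the identity $\int_0^x k_n(x,r)\,dr = 1$ alone only yields $\|\mathcal{K}^n v\|_\infty \leq \|v\|_\infty$, which is not a contraction. I would close this gap by a bootstrap from the representation $v = \rli{0}{x}{f}[\bar\mu v]$, exploiting the complete monotonicity (hence monotone decrease) of $\bar\mu$ from Assumption \ref{ass-a} together with the strict inequality $q<1$ to upgrade $v$ into the class where Lemma \ref{ope-bou} applies, after which $n\to\infty$ closes the argument.
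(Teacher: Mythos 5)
Your existence argument is essentially the paper's: the same Picard iteration, the same series $\sum_{j\ge 0}\mathcal{K}^j\rli{0}{x}{f}g$, convergence from Lemmas \ref{ope-bou}, \ref{ser-cov-14}, \ref{es-rei-liu-15} with $M=\|g\|_\infty$, and membership in $C_{\bar\mu}[0,T]$ via Theorem \ref{inv-berf-der}; that half is fine. The genuine gap is in uniqueness. You correctly reduce to $v=\mathcal{K}v$, correctly observe that $\int_0^x k_1(x,r)\,dr=1$ only gives $\|\mathcal{K}^n v\|_\infty\le\|v\|_\infty$, and then defer the decisive step --- the weighted bound $|v(x)|\le M\int_0^x k(s)\,ds$ --- to an unspecified ``bootstrap'' from $v=\rli{0}{x}{f}[\bar\mu v]$ using monotonicity of $\bar\mu$ and $q<1$. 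But this is precisely what needs a proof, and the tools you invoke cannot produce it: from continuity, $v(0)=0$ and $v=\mathcal{K}^n v$, finitely many applications of your estimates only give $|v(x)|\le\int_0^x k_n(x,r)\,\omega(r)\,dr$ with $\omega$ a modulus of continuity, and each $k_n(x,\cdot)$ is a probability density on $(0,x)$ that keeps a fixed positive fraction of its mass on $[x/2,x]$ (in the fractional case $k_1(x,r)=\tfrac{r^{-\alpha}(x-r)^{\alpha-1}}{\Gamma(1-\alpha)\Gamma(\alpha)}$ is scale invariant), so such bounds can never force the decay rate $\int_0^x k(s)\,ds\sim x^\alpha$; a decay like $1/\log(1/x)$ is compatible with every inequality in your chain yet violates the weighted bound for every $M$. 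So, as written, the uniqueness step is not closed.

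For comparison, the paper avoids the weighted bound entirely with a short extremum argument: since $\int_0^x\bar\mu(r)k(x-r)\,dr=1$, the fixed-point relation gives $\int_0^{\xi}\bigl(\psi(\xi)-\psi(r)\bigr)\bar\mu(r)k(\xi-r)\,dr=0$ at a maximizer $\xi$ of $|\psi|$; the integrand has a fixed sign, hence $\psi$ is constant on $[0,\xi]$, and $\psi(0)=0$ forces $\psi\equiv0$. If you want to stay with your contraction philosophy, a correct finish is to let $n\to\infty$ instead of seeking the weighted bound: apply Lemma \ref{ser-cov-14} to $F(r)=\int_0^r k(s)\,ds$ to get $\sum_n\mathcal{K}^nF(x)<\infty$, hence $\mathcal{K}^nF(x)=\int_0^x k_n(x,r)F(r)\,dr\to0$; then for fixed $x$ and any $\delta\in(0,x)$ with $|v|\le\varepsilon$ on $[0,\delta]$, the bound $\int_\delta^x k_n(x,r)\,dr\le F(\delta)^{-1}\mathcal{K}^nF(x)$ yields $|v(x)|=|\mathcal{K}^n v(x)|\le\varepsilon+\|v\|_\infty F(\delta)^{-1}\mathcal{K}^nF(x)\to\varepsilon$, so $v\equiv0$. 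Either repair is short, but one of them must actually be carried out.
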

\begin{remark}\label{re-3-4-18} We see from the proof of Theorem \ref{inv-cen-ope-16} that if 
 $g\in C[0, T]$, then we have $\cei{0}{\cdot}{f}g\in C_{\bar{\mu}}[0, T]$.
\end{remark}
\begin{proof}
Step 1. we show the existence of $\phi$ using Picard iteration.
Set  $\bar{\phi}(x)=\phi(x)-\phi(0)$,  we have the following  equality.
\begin{equation}\label{ivp-17}
\rld{0}{x}{f}\bar{\phi}=\left\{\begin{aligned}&g(x)+\bar{\phi}(x)\bar{\mu}(x), & x>0;\\
&0, & x=0.
\end{aligned}
\right.
\end{equation}
We define the following sequence 
\begin{equation}
\left\{
\begin{aligned}
\bar{\phi}_{n+1}(x)&=\rli{0}{x}{f}\big[g+\bar{\phi}_n\bar{\mu}\big]\\
\bar{\phi}_{0}(x)&=0.
\end{aligned}
\right.
\end{equation}
% \begin{align*}
% \bar{\phi}_{1}(x)&=\rli{0}{x}{f}[g]\\&=\mathcal{K}^0\big[\rli{0}{x}{f}[g]\big]
%  \\ \bar{\phi}_{2}(x)&=\rli{0}{x}{f}\big[g+\bar{\mu}\rli{0}{x}{f}[g]\big]\\&=\rli{0}{x}{f}[g]+\rli{0}{x}{f}\big[\bar{\mu}\rli{0}{x}{f}[g]\big]\\&=\mathcal{K}^0\big[\rli{0}{x}{f}[g]\big]+\mathcal{K}^1\big[\rli{0}{x}{f}[g]\big]\\&,...,\\ \bar{\phi}_{n+1}(x) &=\sum_{j=0}^n\mathcal{K}^j\big[\rli{0}{x}{f}[g]\big]
% \end{align*}
By Picard iteration we have $$\bar{\phi}_{n+1}(x) =\sum_{j=0}^n\mathcal{K}^j\big[\rli{0}{x}{f}[g]\big].$$
For every $x$ the limit $ \lim_{n\to \infty}\bar{\phi}_{n+1}(x)= \bar{\phi}(x)=\sum_{j=0}^{\infty}\mathcal{K}^j\big[\rli{0}{x}{f}[g]\big] $ exists  due to Lemma \ref{es-rei-liu-15} and Lemma \ref{ser-cov-14}. We get $$\bar{\phi}(x)=\sum_{j=0}^{\infty}\mathcal{K}^j\big[\rli{0}{x}{f}[g]\big] \quad \text {i.e.} \quad \cei{0}{x}{f}g=\sum_{j=0}^\infty \mathcal{K}^j\rli{0}{x}{f} [g].$$
%\bigskip 

Step  2.  Before we continue our proof, let us show the following auxiliary statement and estimate: 
\begin{lemma}\label{dif-rep-cen}  Under the conditions of Theorem \ref{inv-cen-ope-16},  $\cei{0}{x}{f}g$ can be equivalently represented as
\begin{align}
\cei{0}{x}{f}g &=\rli{0}{x}{f}\left[\bar{\mu} \sum_{j=0}^{\infty}\mathcal{K}^j\left[\bar{\mu}^{-1}g \right]\right]\\&=\sum_{j=0}^{\infty}\mathcal{K}^{j+1}\big[\bar{\mu}(x)^{-1}g(x)\big]
\end{align}
\end{lemma}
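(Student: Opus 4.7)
The plan is to reduce both representations to the already-established series formula $\cei{0}{x}{f}g=\sum_{j=0}^\infty \mathcal{K}^j\rli{0}{x}{f}[g]$ from Theorem \ref{inv-cen-ope-16}, using the key identity $\mathcal{K}\phi=\rli{0}{x}{f}[\bar\mu\phi]$ recorded in Definition \ref{ker-def-1}. The proof is essentially a rewrite, so the real content lies in making sure every interchange of sum, operator and pointwise multiplication is legitimate.

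First I would prove the second equality $\cei{0}{x}{f}g=\sum_{j=0}^\infty \mathcal{K}^{j+1}[\bar\mu^{-1}g]$. Applying the definition of $\mathcal{K}$ with $\phi=\bar\mu^{-1}g$ one gets
\begin{equation*}
\mathcal{K}[\bar\mu^{-1}g](x)=\int_0^x\bar\mu(r)k(x-r)\bar\mu(r)^{-1}g(r)\,dr=\int_0^x k(x-r)g(r)\,dr=\rli{0}{x}{f}[g](x),
\end{equation*}
so the substitution $\rli{0}{x}{f}[g]=\mathcal{K}[\bar\mu^{-1}g]$ is valid (note that under Assumption \ref{ass-a} the function $\bar\mu$ is strictly positive on $(0,T]$, so $\bar\mu^{-1}$ makes sense, and the integrand $\bar\mu\cdot\bar\mu^{-1}g$ that actually appears is bounded by $\|g\|_\infty$, hence integrable against $k(x-\cdot)$). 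Inserting this into Theorem \ref{inv-cen-ope-16} yields $\mathcal{K}^j\rli{0}{x}{f}[g]=\mathcal{K}^{j+1}[\bar\mu^{-1}g]$, and summing over $j\ge 0$ gives the claim.

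For the first equality I would pull one application of $\mathcal{K}=\rli{0}{x}{f}[\bar\mu\,\cdot\,]$ out of each term, writing
\begin{equation*}
\mathcal{K}^{j+1}[\bar\mu^{-1}g]=\rli{0}{x}{f}\bigl[\bar\mu\,\mathcal{K}^j[\bar\mu^{-1}g]\bigr],
\end{equation*}
and then exchange the (finite or infinite) sum with the linear operator $\rli{0}{x}{f}$ after multiplying by $\bar\mu$. This exchange is the single point that requires care: $\bar\mu$ is unbounded near $0$, so I cannot pull the sum through the integral just by boundedness of the partial sums. Instead, I would argue as follows. For each $n$, linearity gives
\begin{equation*}
\sum_{j=0}^n \mathcal{K}^{j+1}[\bar\mu^{-1}g]=\rli{0}{x}{f}\!\left[\bar\mu\sum_{j=0}^n\mathcal{K}^j[\bar\mu^{-1}g]\right]=\int_0^x k(x-r)\,\bar\mu(r)\sum_{j=0}^n\mathcal{K}^j[\bar\mu^{-1}g](r)\,dr.
\end{equation*}
From Lemma \ref{ope-bou} and Lemma \ref{ser-cov-14} applied with $\varphi=\mathcal{K}[\bar\mu^{-1}g]=\rli{0}{x}{f}[g]$ (which is continuous on $[0,T]$ and satisfies the bound $|\varphi(x)|\le\|g\|_\infty\int_0^x k(s)\,ds$ by Lemma \ref{es-rei-liu-15}), the tail $\sum_{j\ge n+1}\mathcal{K}^{j+1}[\bar\mu^{-1}g](r)$ is bounded by $\|g\|_\infty\bigl(\sum_{j\ge n+1}q^{j}\bigr)\int_0^r k(s)\,ds$ uniformly in $r\in[0,T]$. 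Since $\bar\mu(r)\int_0^r k(s)\,ds\le 1$ for all $r\in[0,T]$ by Lemma \ref{con-tra}, the product $\bar\mu(r)\cdot(\text{tail})$ is bounded uniformly by $\|g\|_\infty\sum_{j\ge n+1}q^j\to 0$. Hence dominated convergence justifies passing to the limit inside $\rli{0}{x}{f}$, producing the first claimed identity.

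The main obstacle is precisely this interchange: the naive estimate $\bar\mu\cdot\mathcal{K}^j[\bar\mu^{-1}g]$ blows up at $0$ term by term, so one must combine the geometric $q^j$-decay from Lemma \ref{ope-bou} with the pointwise bound $\bar\mu(r)\int_0^r k(s)\,ds\le 1$ (the non-trivial ingredient from Lemma \ref{con-tra}) to obtain an integrable majorant. Everything else is bookkeeping with the identity $\mathcal{K}=\rli{0}{x}{f}\circ(\bar\mu\,\cdot\,)$.
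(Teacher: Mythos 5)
Your proof is correct and follows essentially the same route as the paper: both rest on the identity $\mathcal{K}[\bar\mu^{-1}g]=\rli{0}{x}{f}g$ and the rewriting $\mathcal{K}^{j+1}[\bar\mu^{-1}g]=\rli{0}{x}{f}\bigl[\bar\mu\,\mathcal{K}^j[\bar\mu^{-1}g]\bigr]$, followed by summation over $j$. The only difference is that you explicitly justify moving the infinite sum inside $\rli{0}{x}{f}$ by combining the geometric $q^j$-decay with the bound $\bar\mu(r)\int_0^r k(s)\,ds\le 1$, an interchange the paper performs without comment.
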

\begin{proof}[proof of lemma \ref{dif-rep-cen}] Using the definition of  the above Step 1,  and the definition of the operator $\mathcal{K}$ in Definition \ref{ker-def-1}, we get
\begin{align*}
\cei{0}{x}{f}g &=\sum_{j=0}^{\infty}\mathcal{K}^j\big[\rli{0}{x}{f}[g]\big]=\sum_{j=0}^{\infty}\int_0^xk_j(x,r)\rli{0}{r}{f}[g(r)]\,dr
\\&=\sum_{j=0}^{\infty}\int_0^xk_j(x,r)\mathcal{K}\big[\bar{\mu}(r)^{-1}g(r)\big]\,dr
%\\&=\sum_{j=0}^{\infty}\int_0^xk_{j+1}(x,r)\bar{\mu}(r)^{-1}g(r)\,dr
=\sum_{j=0}^{\infty}\mathcal{K}^{j+1}\big[\bar{\mu}(x)^{-1}g(x)\big]
\\&=\sum_{j=0}^{\infty}\mathcal{K}\left[\bar{\mu}(x)^{-1} \bar{\mu}(x)\mathcal{K}^j\left[\bar{\mu}(x)^{-1}g(x)\right]\right]
=\sum_{j=0}^{\infty}\rli{0}{x}{f}\left[\bar{\mu}\mathcal{K}^j\left[\bar{\mu}^{-1}g \right]\right]
\\&=\rli{0}{x}{f}\left[\sum_{j=0}^{\infty}\bar{\mu} \mathcal{K}^j\left[\bar{\mu}^{-1}g \right]\right].
\qedhere
\end{align*}
\end{proof}
\noindent We will now continue with the proof of Theorem \ref{inv-cen-ope-16}.
Denote $\hat g=\bar{\mu} \sum_{j=0}^{\infty}\mathcal{K}^j\left[\bar{\mu}^{-1}g \right]$, then we have the following estimate:
\begin{equation}\label{3-4-27}
\mathcal{K}^j \left[\bar{\mu}^{-1} g\right](x)\leq q^{j-1} \|g\|_\infty \int_0^x k(r)\,dr.
\end{equation}
We prove the \eqref{3-4-27} by induction.
Indeed, when $j=1$, using the definition of the operator $\mathcal{K}$ we have 
\begin{align*}
\mathcal{K} \bar{\mu}^{-1}(x) g(x)&= \int_0^x \bar{\mu}(r)(\bar{\mu}^{-1} (r)g(r)k(x-r)\,dr
\\&\leq \|g\|_\infty \int_0^x k(r)\,dr.
\end{align*}
We assume that \eqref{3-4-27} holds for some $j\in \mathbb{N}$. For $j \rightsquigarrow j+1$, we obtain
\begin{align*}
\mathcal{K}^{j+1}\bar{\mu}^{-1}(x) g(x)&=\mathcal{K}^j \mathcal{K}\bar{\mu}^{-1}(x) g(x)
\\&\leq \|g\|_\infty \mathcal{K}^j  \int_0^x k(r)\,dr
%\\&\leq \|g\|_\infty q^j \int_0^x k(r)\,dr
\\&\leq \|g\|_\infty q^j \int_0^T k(r)\,dr,
\end{align*}
where the second inequality follows by Lemma \ref{ser-cov-14}. This finishes argument induction. 
%\bigskip 

Step 3. We show $\cei{0}{\cdot}{f}g\in C_{\bar{\mu}}[0,T] $. By Lemma \ref{ser-cov-14}, Lemma \ref{es-rei-liu-15} and \eqref{3-19}, we have $\cei{0}{\cdot}{f}g\in C[0,T]$ since $g$ is continuous. Next we need to show $\cei{0}{\cdot}{f}g\in  C_{\bar{\mu}}(0,T]$. For this, we write  $\cei{0}{\cdot}{f}g=\rli{0}{\cdot}{f}\hat g$,  with $\hat g=\bar{\mu} \sum_{j=0}^{\infty}\mathcal{K}^j\left[\bar{\mu}^{-1}\varphi \right]$ as in Step 2 and show that $\hat g\in C(0, T]\cap L^1(0, T]$ see Theorem \ref{inv-berf-der}. 
In fact, integrability of $ \hat g$ follows from
\begin{align*}
\int_0^T \left|\sum_{j=0}^{\infty}\bar{\mu}(x)\mathcal{K}^j\big[\bar{\mu}(x)^{-1}g(x)\big]\right| \,dx 
&\leq \int_0^T \bar{\mu}(x)\sum_{j=0}^{\infty} \left|\mathcal{K}^j\big[\bar{\mu}(x)^{-1}g(x)\big]\right| \,dx 
%\\&= \int_0^T \bar{\mu}(x)\sum_{j=0}^{\infty} \left|\mathcal{K}^j \big[\bar{\mu}(x)^{-1}g(x)\big]\right| \,dx 
\\ &\leq\frac {\|g\|_\infty}{1-q}\int_0^T\int_0^x k(s)\,ds\,\bar{\mu}(x) \,dx
\\ &\leq\frac {\|g\|_\infty}{1-q}\int_0^T\bar{\mu}(x) \,dx\int_0^T k(s)\,ds
\\&<\infty, 
\end{align*}
where the first inequality follows by \eqref{3-4-27}.
To see the continuity of $\hat g$, it suffices to show that $\sum_{j=0}^{\infty}\mathcal{K}^j\left[\bar{\mu}^{-1}g \right]$ is continuous, since $\bar\mu$ is continuous on $C(0, T]$ by Assumption \ref{ass-a}. Indeed, to show that $\sum_{j=0}^{\infty}\mathcal{K}^j\left[\bar{\mu}^{-1}g \right]$ is uniformly convergent, we can use the following estimate:
\begin{align*}
\sum_{j=0}^{\infty}\mathcal{K}^j\left[\bar{\mu}^{-1}g \right]
=\bar{\mu}^{-1}g +\sum_{j=1}^{\infty}\mathcal{K}^j\left[\bar{\mu}^{-1}g \right].
\end{align*}
%\\&= \sum_{j=0}^{\infty}(\rli{0}{x}{f} \bar\mu(x))^j \left[\bar{\mu}^{-1}g \right]
%\\&= g+\int_0^x \bar\mu(r) \bar{\mu}^{-1}(r)g(r) k(x-r)\,dr+...+(\rli{0}{x}{f} \bar\mu(x))^j \left[\bar{\mu}^{-1}g \right]+...
Clearly $\bar{\mu}^{-1}g\in C(0, T]$. Moreover
\begin{align*}
\sum_{j=1}^{\infty}\mathcal{K}^j\left[\bar{\mu}^{-1}g \right]
=\|g\|_\infty+\|g\|_\infty \sum_{j=1}^{\infty} q^{j-1}\int_0^x k(r)\,dr
\leq \frac{\|g\|_\infty}{1-q} \int_0^T k(r)\,dr.
\end{align*}
Thus, $\sum_{j=1}^{\infty}\mathcal{K}^j\left[\bar{\mu}^{-1}g \right]$  and $\sum_{j=0}^{\infty}\mathcal{K}^j\left[\bar{\mu}^{-1}g \right]$  is continuous and in $C(0, T]$. 
Combining the above results, we can get $\cei{0}{\cdot}{f}g\in C[0,T]$ and $\cei{0}{\cdot}{f}g\in C_{\bar\mu}(0, T]$, then we obtain $\cei{0}{\cdot}{f}g\in C_{\bar\mu}[0, T]$.
%\bigskip

Step 4. We show that the solution is unique in $C_{\bar{\mu}}[0,T] $. Let $\phi_1,\phi_2\in C_{\bar{\mu}}[0,T] $ be two solutions to the problem (\ref{ivp-15}). By the linearity of the operator $\ced{0}{\cdot}{f}$, $\psi=\phi_1-\phi_2\in C_{\bar{\mu}}[0,T]$ satisfies the following equation
\begin{equation}\label{ivp-1}
\rld{0}{x}{f}\psi =\left\{\begin{aligned}&\psi(x)\bar{\mu}(x),  \quad x>0;\\
& 0, \quad x=0.
\end{aligned}
\right.
\end{equation}
Using Theorem \ref{inv-berf-der}, Definition \ref{ker-def-1} and Lemma \ref{uni-riem-spac} , apply the inverse operator of $\rld{0}{x}{f}$ on both sides, we get $\psi (x)=\rli{0}{x}{f}\big[\psi \bar{\mu}\big]=\mathcal{K}\psi(x)$. If $\psi\equiv 0$ on $[0,T]$, we get uniqueness.  Assume, to the contrary, that  $\psi\neq 0$.  Because of $\psi (x)=\mathcal{K}\psi(x)$ we have \linebreak $ \int_0^x (\psi(x)-\psi(r))\bar{\mu}(r)k(x-r)\,dr=0$, for all  $x\in [0,T]$. This is impossible: Take $ \xi\in \argmax_{r\in[0,T]}|\psi(r)|$, we have $ \int_0^\xi (\psi(\xi)-\psi(r))\bar{\mu}(r)k(\xi-r)\,dr=0$. This is  impossible, and we have reached  a contradiction.
\end{proof}
\section{Resolvent equation}

%In this section   we assume the condition Assumption \ref{ass-a} hold true and  the condition  from Lemma \ref{con-tra},  $q=\lim\limits_{x\to 0}\bar{\mu}(x)\int_0^x k(s)\,ds<1$  plays an important role again in the context of this section.   %Let $k,\bar\mu$ be a Sonine pair and $\Lscr(\bar\mu, \lambda)=f(\lambda)/\lambda, \Lscr(k, \lambda)=1/f(\lambda)$. % \\$q=\lim\limits_{x\to 0}\bar{\mu}(x)\int_0^x k(s)\,ds<1$  %The limit $q=\lim\limits_{x\to 0}\bar{\mu}(x)\int_0^x k(s)\,ds$ is equal to $1$ if, and only if,  the supremum $\sup\limits_{x\in [0,T]}\bar{\mu}(x)\int_0^x k(s)\,ds $ } is equal to $1$ for some $T>0$.
To show $\ced{0}{x}{f}$ is a generator of a stochastic process , we will apply the Hille-Yosida theorem, which leads us to solve the resolvent equation \eqref{eigen-equ-1}.
% \begin{lemma}\label{3-5-1} If $\varphi$ is increasing and  positive, then $\rli{0}{x}{f}\varphi$ is increasing.
% \end{lemma}
% \begin{proof}
% Let $y\geq x$. Using definition of $\rli{0}{\cdot}{f}$, we obtain
% \begin{align*}
% \rli{0}{x}{f} \varphi&=\int_0^x \varphi(s)k(x-s)\,ds=\int_0^x \varphi(x-s)k(s)\,ds
% \\&\leq \int_0^x \varphi(y-s)k(s)\,ds\leq \int_0^y \varphi(y-s)k(s)\,ds
% \\&=\rli{0}{y}{f} \varphi.\qedhere
% \end{align*}
% \end{proof}
\begin{theorem}\label{lin-cen-ini-val} Let Assumption \ref{ass-a} and $q=\lim\limits_{x\to 0}\bar{\mu}(x)\int_0^x k(s)\,ds<1$ hold true.  Let $k,\bar\mu$ be a Sonine pair and $\Lscr(\bar\mu, \lambda)=f(\lambda)/\lambda, \quad \Lscr(k, \lambda)=1/f(\lambda)$.  Moreover we assume that $f$ is unbounded.  For any $\phi_0\in \real$,  $\lambda\in \real$, the following initial value problem  
\begin{equation}\label{eigen-equ-1}
\left\{
\begin{aligned}
\ced{0}{x}{f}\phi(x)&=\lambda \phi(x), & x\in (0, T]
\\ \phi(x)&=\phi_0, &x=0
\end{aligned}
\right.
\end{equation}
has a unique solution $\phi$ in $C_{\bar{\mu}}[0,T]$ given by $\phi(x)=\phi_0\sum_{j=0}^{\infty}(\lambda \cei{0}{x}{f})^j \mathds{1}$.
\end{theorem}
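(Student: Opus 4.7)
My strategy is to reduce the eigenvalue problem \eqref{eigen-equ-1} to the inhomogeneous censored initial-value problem already inverted by Theorem \ref{inv-cen-ope-16}, then iterate its inverse $\cei{0}{x}{f}$ to obtain the claimed series. First, set $\bar\phi:=\phi-\phi_0$. By the alternative form in Remark \ref{ber-def-rei} (with $b=0$) one has $\rld{0}{x}{f}c = c\bar\mu$ for any constant $c$, so constants lie in the kernel of $\ced{0}{x}{f}$, and \eqref{eigen-equ-1} becomes
\begin{equation*}
\ced{0}{x}{f}\bar\phi = \lambda\bar\phi + \lambda\phi_0,\qquad \bar\phi(0)=0.
\end{equation*}
Theorem \ref{inv-cen-ope-16} converts this into the fixed-point equation
\begin{equation*}
\bar\phi \;=\; \lambda\phi_0\,\cei{0}{x}{f}\mathds{1} + \lambda\,\cei{0}{x}{f}\bar\phi\qquad\text{in } C_{\bar\mu}[0,T].
\end{equation*}
Picard iteration starting from $\bar\phi^{(0)}=0$ produces the partial sums $\bar\phi^{(N)} = \phi_0\sum_{j=1}^{N}(\lambda\cei{0}{x}{f})^j\mathds{1}$, which motivates the candidate solution $\phi = \phi_0\sum_{j=0}^{\infty}(\lambda\cei{0}{x}{f})^j\mathds{1}$.

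The main obstacle is proving uniform convergence of this series on $[0,T]$ for \emph{every} $\lambda\in\real$, not merely for $|\lambda|$ small. The direct estimates of Lemmas \ref{ope-bou}, \ref{ser-cov-14}, \ref{es-rei-liu-15} only yield $\|\cei{0}{x}{f}g\|_{C[0,T]}\leq H\|g\|_\infty/(1-q)$ with $H=\int_0^T k(s)\,ds$, and hence the merely geometric bound $\|(\cei{0}{x}{f})^j\mathds{1}\|_\infty\leq (H/(1-q))^j$, which would restrict $|\lambda|<(1-q)/H$. The plan is to refine this to a Mittag-Leffler-type estimate
\begin{equation*}
|(\cei{0}{x}{f})^j\mathds{1}(x)|\leq c_j(x),
\end{equation*}
with $\sum_j |\lambda|^j c_j(x)$ convergent for every $\lambda$. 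A natural candidate, guided by Assumption \ref{ass-a}, is $c_j(x)=C^j(k^{*j}*\mathds{1})(x)$; its Laplace transform $1/(\lambda f(\lambda)^j)$ decays rapidly in $j$, which should translate into the required factorial-type decay via a Tauberian argument. The inductive step would use the identity $\rli{0}{x}{f}g=k*g$, together with the self-improvement feature built into Lemma \ref{ope-bou} (each application of $\mathcal{K}$ introduces an additional $q<1$), applied to the fact that $(\cei{0}{x}{f})^{j-1}\mathds{1}$ vanishes at $0$ with increasing order rather than being merely bounded.

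With convergence in hand, continuity of $\phi$ on $[0,T]$ follows termwise from Lemma \ref{ser-cov-14}, and its membership in $C_{\bar\mu}[0,T]$ is inherited termwise from Remark \ref{re-3-4-18}. Applying $\ced{0}{x}{f}$ to the series and using $\ced{0}{x}{f}\cei{0}{x}{f}=\mathrm{id}$ (with the $j=0$ summand $\mathds{1}$ contributing $0$ since constants are in the kernel) verifies $\ced{0}{x}{f}\phi=\lambda\phi$, and the initial condition follows since $(\cei{0}{x}{f})^j\mathds{1}(0)=0$ for $j\geq 1$. For uniqueness, any two solutions differ by $\psi\in C_{\bar\mu}[0,T]$ with $\ced{0}{x}{f}\psi=\lambda\psi$ and $\psi(0)=0$; applying $\cei{0}{x}{f}$ iteratively gives $\psi=\lambda^n(\cei{0}{x}{f})^n\psi$ for every $n$, and the same refined factorial estimate forces $\|\psi\|_\infty\to 0$, whence $\psi\equiv 0$.
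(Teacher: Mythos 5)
Your skeleton is the same as the paper's: Picard iteration producing the candidate series $\phi_0\sum_{j\geq 0}(\lambda\,\cei{0}{x}{f})^j\mathds{1}$, a refined iterated bound of the form $(\cei{0}{x}{f})^j\mathds{1}\leq (1-q)^{-j}\,(k^{*j}*\mathds{1})$ (this is exactly the paper's inequality \eqref{est-gen-ker-3}, since $(\rli{0}{x}{f})^{j-1}\int_0^{x}k(y)\,dy=(k^{*j}*\mathds{1})(x)$, $k^{*j}$ denoting the $j$-fold convolution), and a Laplace-transform argument to reach every $\lambda\in\real$. The difficulty is that the two steps carrying all the weight are left as intentions. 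The first gap is the inductive proof of the refined bound: it is not a routine consequence of the ``self-improvement'' of Lemma \ref{ope-bou}, because that lemma controls $\mathcal{K}$ acting on a function dominated by $\int_0^x k$, whereas here one must control $\cei{0}{x}{f}=\sum_i\mathcal{K}^i\rli{0}{x}{f}$ applied to $(\rli{0}{\cdot}{f})^{j-1}(k*\mathds{1})$. The paper's induction works because each $\mathcal{K}$ inserts a factor $\bar\mu(\cdot)$ which, using that $\bar\mu$ is \emph{decreasing}, can be pushed through the iterated integrals until it multiplies $\int_0^{\cdot}k$, where $\bar\mu(x)\int_0^x k(s)\,ds\leq q$ applies; without this commutation argument (or an equivalent) the claimed estimate $c_j=C^j(k^{*j}*\mathds{1})$ is unproven, and the mere fact that $(\cei{0}{x}{f})^{j-1}\mathds{1}$ vanishes at $0$ does not substitute for it.

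The second gap is the convergence for arbitrary $\lambda$. You assert that the Laplace transform $\Lscr(k^{*j}*\mathds{1};s)=1/(s f(s)^j)$ ``should translate into the required factorial-type decay via a Tauberian argument''. For a general $f$ under Assumption \ref{ass-a} there is no factorial decay (that is special to $f(\lambda)=\lambda^{\alpha}$), and convergence of Laplace transforms does not by itself give uniform convergence on $[0,T]$. What actually saves the argument is the hypothesis, stated in the theorem but never used in your proposal, that $f$ is unbounded: choosing $s_0$ with $f(s_0)(1-q)>|\lambda|$ and using that $x\mapsto(k^{*j}*\mathds{1})(x)$ is nondecreasing, one gets $(k^{*j}*\mathds{1})(x)\leq(k^{*j}*\mathds{1})(T)\leq e^{s_0T}f(s_0)^{-j}$, hence a convergent geometric majorant; the paper reaches the same conclusion by a subsequence/monotonicity argument on the partial sums $F_n$. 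As written, your convergence step only covers $|\lambda|<(1-q)/\int_0^Tk(s)\,ds$, which is precisely the restriction the theorem is meant to remove. Finally, your uniqueness argument ($\psi=\lambda^n(\cei{0}{x}{f})^n\psi$, let $n\to\infty$) differs from the paper's (which reuses the argmax contradiction of Theorem \ref{inv-cen-ope-16}, Step 4) and is fine in principle, since positivity gives $|(\cei{0}{x}{f})^n\psi|\leq\|\psi\|_\infty(\cei{0}{x}{f})^n\mathds{1}$, but it is contingent on the very estimates that are missing.
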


\begin{proof}
Without loss of generality we assume $\lambda>0$, if $\lambda<0$, set $\Lambda=-\lambda$ repeated the procedure $\lambda>0$. 
The proof relies on an iteration scheme, i.e. we define the following sequence 
\begin{equation}
\left\{
\begin{aligned}
\phi_{n}(x)&=\lambda \cei{0}{x}{f}\phi_{n-1}+\phi_0\\
\phi_{0}(x)&=\phi_0.
\end{aligned}
\right.
\end{equation}
% \begin{align*}
% \phi_{1}(x)&=\lambda \cei{0}{x}{f}\phi_0+\phi_0,
%  \\ \phi_{2}(x)&=\lambda \cei{0}{x}{f}\phi_{1}+\phi_0\\&=\lambda \cei{0}{x}{f}[\lambda \cei{0}{x}{f}\phi_0+\phi_0]+\phi_0\\&=\lambda^2 (\cei{0}{x}{f})^2\phi_0+\cei{0}{x}{f}\phi_0+\phi_0,\\&,...,\\ \phi_{n+1}(x) &=\sum_{j=0}^{n}(\lambda \cei{0}{x}{f})^j\phi_0.
% \end{align*}
By Picard iteration, we have $$\phi_{n+1}(x) =\sum_{j=0}^{n}(\lambda \cei{0}{x}{f})^j\phi_0.$$

\noindent  We have to show that $ \sum_{j=0}^{\infty}(\lambda \cei{0}{x}{f})^j \mathds{1}$ converges.

\noindent We use induction to show 
\begin{equation}\label{est-gen-ker-3}
(\lambda \cei{0}{x}{f})^j \mathds{1} \leq \left(\frac{\lambda}{1-q}\right)^j \left(\rli{0}{x}{f}\right)^{j-1} \int_0^{\tiny\bullet} k(y)\,dy.
\end{equation}
Recall the definition $\cei{0}{x}{f}\mathds{1}=\sum_{i=0}^\infty \mathcal{K}^i \rli{0}{x}{f} \mathds{1}$ in  Theorem \ref{inv-cen-ope-16} \eqref{3-19}. 
For $j=1$,  we get from  Lemma \ref{ope-bou} and from the the following inequality, 
\begin{align*}
\mathcal{K}^i \rli{0}{x}{f} \mathds{1} \leq q^i \int_0^x k(y)\,dy,
\end{align*}
and from the above  definition of $\cei{0}{x}{f}\mathds{1}$ that $$ \cei{0}{x}{f}\mathds{1}=\sum_{i=0}^\infty \mathcal{K}^i \rli{0}{x}{f} \mathds{1} \leq \sum_{i=0}^\infty q^i \int_0^x k(y)\,dy=\frac{1}{1-q} \int_0^x k(y)\,dy.$$
Using \eqref{est-gen-ker-3} as induction assumption for some $j\in \mathds{N}$, we get  for $j \rightsquigarrow j+1$, that
\begin{align*}
\left(\cei{0}{x}{f}\right)^{j+1}\mathds{1}&\pomu{\eqref{3-19}}{=}{}\cei{0}{x}{f} \left(\cei{0}{x}{f}\right)^{j}\mathds{1}
\\&\pomu{\eqref{3-19}}{\leq}{} \frac{1}{(1-q)^j} \cei{0}{x}{f} \left(\rli{0}{x}{f}\right)^{j-1} \int_0^{\tiny\bullet} k(y)\,dy
\\&\omu{\eqref{3-19}}{=}{}\frac{1}{(1-q)^j} \sum_{i=0}^\infty \mathcal{K}^i \rli{0}{x}{f} \left[\left(\rli{0}{\cdot}{f}\right)^{j-1} \int_0^{\tiny\bullet} k(y)\,dy\right]
\\&\pomu{\eqref{3-19}}{\leq}{}\frac{1}{(1-q)^{j+1}} \left(\rli{0}{x}{f}\right)^j\int_0^{\tiny\bullet} k(y)\,dy. 
\end{align*}
The last inequality follows from the calculation below: 
 \begin{align*}
\mathcal{K}^i \rli{0}{x}{f} \left[\left(\rli{0}{\cdot}{f}\right)^{j-1} \int_0^{\tiny\bullet} k(y)\,dy\right]
%&=\mathcal{K}^i \left[\left(\rli{0}{x}{f}\right)^{j} \int_0^{\tiny\bullet} k(y)\,dy\right]
&=\mathcal{K}^{i-1} \mathcal{K} \left[\left(\rli{0}{x}{f}\right)^{j}\int_0^{\tiny\bullet}  k(y)\,dy\right]
\\&=\mathcal{K}^{i-1} \left(\rli{0}{x}{f}\right) \bar{\mu}(\cdot) \left[\left(\rli{0}{\cdot}{f}\right)^{j} \int_0^{\tiny\bullet} k(y)\,dy\right]
\\&\stackrel {(*)}{\leq} \mathcal{K}^{i-1} \left(\rli{0}{x}{f}\right)  \left[ \left(\rli{0}{\cdot}{f}\right)^{j} \bar{\mu}(\cdot) \int_0^{\tiny\bullet} k(y)\,dy\right]
\\&\leq q \mathcal{K}^{i-1} \left(\rli{0}{x}{f}\right)^{j+1} \mathds{1}
\\&= q \mathcal{K}^{i-1} \left(\rli{0}{x}{f}\right)^{j} \int_0^{\tiny\bullet} k(y).
\end{align*}
In the step marked by $(*)$ we use the monotonicity of $\bar\mu$.
Repeated  use of  the above calculation, yields
$$\mathcal{K}^i \rli{0}{x}{f} \left[\left(\rli{0}{x}{f}\right)^{j-1} \int_0^{\tiny\bullet} k(y)\,dy\right]\leq q^i \left(\rli{0}{x}{f}\right)^{j} \int_0^x k(y)\,dy.$$
This finishes the proof of \eqref{est-gen-ker-3}. 

Now we show how the assertion of the Theorem follows from \eqref{est-gen-ker-3}.  Taking the  Laplace transform on the right hand side of \eqref{est-gen-ker-3},  we get of the theorem:
\begin{align*}
\Lscr_{x\to s}\left(\left(\rli{0}{x}{f}\right)^{j-1} \int_0^{\tiny\bullet} k(y)\,dy, s\right)
&= \left(\Lscr_{x\to s} \left[k(x), s\right]\right)^{j-1} \Lscr_{x\to s} \left(\int_0^{\tiny\bullet} k(y)\,dy, s\right)
\\&=\frac1{\left(f(s)\right)^{j-1}}\frac1{sf(s)}
\\&=\frac1{s\left(f(s)\right)^j}
\end{align*}
Define $F_n(x)=\sum_{j=0}^n \left(\frac{\lambda}{1-q}\right)^j \left(\rli{0}{x}{f}\right)^{j-1} \int_0^{\tiny\bullet} k(y)\,dy$.
We get 
\begin{align*}
\Lscr_{x\to s}\left[F_n(x), s\right] &\coloneqq \Lscr_{x\to s}\bigg[\sum_{j=0}^n \left(\frac{\lambda}{1-q}\right)^j \left(\rli{0}{x}{f}\right)^{j-1} \int_0^{\tiny\bullet} k(y)\,dy, s\bigg]
 \\&=\sum_{j=0}^n \left(\frac{\lambda}{1-q}\right)^j \Lscr_{x\to s}\left[ \left(\rli{0}{x}{f}\right)^{j-1} \int_0^{\tiny\bullet} k(y)\,dy, s\right]
 \\&=\sum_{j=0}^n \left(\frac{\lambda}{1-q}\right)^j \frac1{s\left(f(s)\right)^j}
 \end{align*}
 If  $s\geq s_0$ is large enough to guarantee that $\lambda/[f(s)(1-q)]<1$, we see that
 $$\lim_{n\to\infty}\Lscr_{x\to s}[F_n(x), s]= \Lscr_{x\to s}[ F(x), s],\quad \forall s\geq s_0.$$
 For all $n, m$ big enough and $s\geq s_0$, we obtain
\begin{align*}
\Lscr_{x\to s_0}\left[ |F_n(x)-F_m(x)|, s_0\right]&=\int_0^\infty e^{-s_0 x}|F_n(x)-F_m(x)|\,dx
\\&=\|F_n-F_m\|_{L^1(e^{-s_0 x}\,dx)}
\end{align*}
Using the diagonal method, we get a subsequence $F_{n_l}$ which converges pointwise almost everywhere to a function $F$. Without loss of generality we can assume that $F_{n_l}(T)$ converges to $F(T)$, otherwise we take another value for $T$.  Assuming $m>n$ and the positive perversity of $\rli{0}{x}{f}$ , we have 
 \begin{align*}
 \sup_{x\in [0, T]}\left|F_n(x)-F_m(x)\right|&=\sup_{x\in [0, T]}\left|\sum_{n+1}^m \left(\frac{\lambda}{1-q}\right)^j  \left(\rli{0}{x}{f}\right)^{j-1} \int_0^{\tiny\bullet} k(s)\,ds\right|
 \\&\leq \left|\sum_{n+1}^m\left(\frac{\lambda}{1-q}\right)^j  \left(\rli{0}{T}{f}\right)^{j-1} \int_0^{\tiny\bullet} k(s)\,ds\right|
 \\&= \left|F_n(T)-F_m(T)\right|
 \\&\leq\left|F_{n_l}(T)-F_{m_l}(T)\right|\xrightarrow[n, m\to \infty]{}0.
 \end{align*}
 where $n_l\leq n+1<m\leq m_l$ are such that $n_l, m_l$ goes to $\infty$. Now we define $$G_n(x)=\sum_{j=0}^n (\lambda \cei{0}{x}{f})^j \mathds{1}.$$
Using \eqref{est-gen-ker-3}, we get $G_n(x)\leq F_n(x)$ and furthermore, assuming $m>n$ we obtain  uniformly for all $x\in [0, T]$,
\begin{align*}
\left|G_n(x)-G_m(x)\right|&=\left| \sum_{n+1}^m (\lambda \cei{0}{x}{f})^j \mathds{1}\right|
\\&\leq \left|\sum_{n+1}^m \left(\frac{\lambda}{1-q}\right)^j  \left(\rli{0}{T}{f}\right)^{j-1} \int_0^{\tiny\bullet} k(s)\,ds\right|
\\&=\left|F_n(T)-F_m(T)\right|\xrightarrow[n, m\to \infty]{}0.
\end{align*}
From this we get that  $G_n$ converges to $G$ (locally) uniformly.  Thus,  $$G(x)=\phi_0\sum_{j=0}^{\infty}(\lambda \cei{0}{x}{f})^j \mathds{1} \quad \text{and} \quad G\in C[0, T].$$ Note that from Remark \ref{re-3-4-18} $$G(x)=\phi_0\sum_{j=1}^{\infty}(\lambda \cei{0}{x}{f})^j \mathds{1}+\phi_0=\phi_0\lambda \cei{0}{x}{f} \big[\sum_{j=0}^{\infty}(\lambda \cei{0}{x}{f})^j \mathds{1}\big]+\phi_0,$$ 

\noindent by Theorem \ref{inv-cen-ope-16}, $G\in C_{\bar\mu}[0, T]$ and it solves \eqref{eigen-equ-1}.
\bigskip

Next we show the uniqueness of the solution to \eqref{eigen-equ-1}.  Assume $\phi_1, \phi_2$ are two solutions of \eqref{eigen-equ-1}. By the linearity of the fractional derivative we have that $\phi=\phi_1-\phi_2$ is the solution of the following equation 
 \begin{equation*}
 \left\{
\begin{aligned}
\ced{0}{x}{f}\phi&=0, &x\in (0, T]
\\ \phi(x)&=0, &x=0
\end{aligned}
\right.
\end{equation*}
Using the strategy of Theorem \ref{inv-cen-ope-16} Step 4, we get $\phi\equiv 0$, i.e. uniqueness.
\end{proof}
For inhomogeneous resolvent equation \eqref{in-hom-ivp}, we have the following result. 
\begin{theorem}\label{inh-ivp} Let  Assumption \ref{ass-a} and $q=\lim_{x\to 0}\bar{\mu}(x)\int_0^x k(s)\,ds<1$ hold true.  Let $k,\bar\mu$ be a Sonine pair and $\Lscr(\bar\mu, \lambda)=f(\lambda)/\lambda, \Lscr(k, \lambda)=1/f(\lambda)$. For any $\phi_0\in \real$,  $\lambda\in \real$, $g\in C[0,T]$, the linear initial value problem 

\begin{equation}\label{in-hom-ivp}
\left\{
\begin{aligned}
\ced{0}{x}{f}\phi(x)&=\lambda \phi(x)+g(x), x\in (0, T]
\\ ~~~~~~~~~~\phi(x)&=\phi_0, ~~~~~~~~~~~~~~x=0,
\end{aligned}
\right.
\end{equation}
has a unique solution in $C_{\bar{\mu}}[0,T]$ given by $$\phi(x)=\phi_0 \sum_{j=0}^{\infty}(\lambda \cei{0}{x}{f})^j \mathds{1}+\sum_{j=0}^{\infty}\lambda^j (\cei{0}{x}{f})^{j+1}g.$$
\end{theorem}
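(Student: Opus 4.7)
The plan is to reduce this to the homogeneous case already handled in Theorem \ref{lin-cen-ini-val} together with a Picard iteration for the inhomogeneous part. By linearity of $\ced{0}{x}{f}$, I would write $\phi = \phi_h + \phi_p$, where $\phi_h$ solves the homogeneous problem $\ced{0}{x}{f}\phi_h = \lambda\phi_h$ with $\phi_h(0)=\phi_0$, and $\phi_p$ solves $\ced{0}{x}{f}\phi_p = \lambda\phi_p + g$ with $\phi_p(0)=0$. Theorem \ref{lin-cen-ini-val} immediately gives $\phi_h(x) = \phi_0\sum_{j=0}^\infty (\lambda\cei{0}{x}{f})^j \mathds{1}$, which accounts for the first summand in the claimed formula.

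For $\phi_p$, my first step is to rewrite the fixed-point identity. Using Theorem \ref{inv-cen-ope-16} applied to $g + \lambda\phi_p \in C[0,T]$, any $C_{\bar\mu}[0,T]$-solution with $\phi_p(0)=0$ must satisfy $\phi_p = \cei{0}{x}{f}[\lambda\phi_p + g] = \lambda\cei{0}{x}{f}\phi_p + \cei{0}{x}{f} g$. Setting $\phi_p^{(0)} = 0$ and $\phi_p^{(n+1)} = \lambda\cei{0}{x}{f}\phi_p^{(n)} + \cei{0}{x}{f}g$, Picard iteration produces the partial sums $\phi_p^{(n+1)} = \sum_{j=0}^n \lambda^j (\cei{0}{x}{f})^{j+1}g$, which matches the second summand in the target formula.

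The main work is to prove that this series converges (locally) uniformly on $[0,T]$ to a limit in $C_{\bar\mu}[0,T]$. My plan is to mimic the induction argument that produced \eqref{est-gen-ker-3}: since $|g|\le \|g\|_\infty\mathds{1}$, Remark \ref{re-3-4-18} combined with Lemma \ref{ope-bou} gives $|\cei{0}{x}{f}g|\le \frac{\|g\|_\infty}{1-q}\int_0^x k(y)\,dy$, and then the same inductive calculation (exploiting the monotonicity of $\bar\mu$ in the step marked $(*)$ in the proof of Theorem \ref{lin-cen-ini-val}) yields
\begin{equation*}
\bigl|\lambda^j(\cei{0}{x}{f})^{j+1}g(x)\bigr| \le \|g\|_\infty\Bigl(\tfrac{\lambda}{1-q}\Bigr)^{j+1}\bigl(\rli{0}{x}{f}\bigr)^j\!\int_0^{\bullet}k(y)\,dy.
\end{equation*}
This is precisely the majorant $F_n$ that was controlled via the Laplace transform identity $\Lscr\bigl[(\rli{0}{x}{f})^{j-1}\int_0^{\bullet}k\bigr](s) = 1/[s f(s)^j]$ in the proof of Theorem \ref{lin-cen-ini-val}; since $f$ is unbounded, $\lambda/[f(s)(1-q)]<1$ for $s$ large, and the diagonal-subsequence/uniform Cauchy argument carries over verbatim to give uniform convergence of the partial sums on $[0,T]$.

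Once convergence is established, I would check that the limit $\phi_p$ belongs to $C_{\bar\mu}[0,T]$: each iterate lies in this space by Remark \ref{re-3-4-18}, and uniform convergence together with the Sonine-pair machinery behind Theorem \ref{inv-cen-ope-16} transfers the $C_{\bar\mu}$ regularity to the limit, essentially as in Step 3 of the proof of Theorem \ref{inv-cen-ope-16}. Passing to the limit in $\phi_p^{(n+1)} = \lambda\cei{0}{x}{f}\phi_p^{(n)} + \cei{0}{x}{f}g$ shows $\phi_p$ solves the fixed-point equation, and applying $\rld{0}{x}{f}$ (using that $\rld{0}{x}{f}$ inverts $\rli{0}{x}{f}$, cf.\ Theorem \ref{inv-berf-der}) recovers the differential equation. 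Uniqueness is then immediate: if $\phi_1,\phi_2\in C_{\bar\mu}[0,T]$ are two solutions, then $\phi_1-\phi_2\in C_{\bar\mu}[0,T]$ solves the homogeneous problem with zero initial value, so Theorem \ref{lin-cen-ini-val} forces $\phi_1\equiv\phi_2$. The main obstacle I expect is the $C_{\bar\mu}[0,T]$-regularity at the limit step, which I would handle by rewriting $\cei{0}{x}{f}g = \rli{0}{x}{f}\hat g$ via Lemma \ref{dif-rep-cen} and controlling $\hat g\in C(0,T]\cap L^1(0,T]$ uniformly along the iteration, exactly as in Step 3 of the proof of Theorem \ref{inv-cen-ope-16}.
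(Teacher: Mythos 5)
Your proposal is correct and follows essentially the same route as the paper: the paper likewise obtains the series $\sum_j \lambda^j(\cei{0}{x}{f})^{j+1}g$ by transferring the convergence estimate \eqref{est-gen-ker-3} from Theorem \ref{lin-cen-ini-val}, verifies the fixed-point identity $\lambda\cei{0}{x}{f}\phi+\cei{0}{x}{f}g=\phi-\phi_0$, and concludes existence, $C_{\bar\mu}[0,T]$-regularity and uniqueness via Theorem \ref{inv-cen-ope-16}. Your only cosmetic deviation is splitting $\phi=\phi_h+\phi_p$ and quoting the homogeneous theorem for $\phi_h$ and for uniqueness, whereas the paper checks the full series at once; the underlying estimates and lemmas are identical.
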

\begin{proof} Using the proof of Theorem \ref{lin-cen-ini-val}, we can show  $$\sum_{j=0}^{\infty}\lambda^j (\cei{0}{x}{f})^{j+1} g\quad \text {and} \quad \sum_{j=0}^{\infty}\lambda^j (\cei{0}{x}{f})^{j+1} \mathds{1}$$ are (locally) uniformly convergence in $C[0,T]$.  Applying  $\cei{0}{x}{f}$  on both side of \eqref{in-hom-ivp} gives after inserting the concrete form of $\phi$, 
\begin{align*}
\lambda \cei{0}{x}{f}\phi+\cei{0}{x}{f}g&\pomu{(*)}{=}{}\lambda \phi_0 \cei{0}{x}{f}\sum_{j=0}^{\infty}(\lambda \cei{0}{x}{f})^j 1+\lambda \cei{0}{x}{f} \sum_{j=0}^{\infty}\lambda^j (\cei{0}{x}{f})^{j+1} g+\cei{0}{x}{f}g
\\&\omu{(*)}{=}{}  \phi_0 \sum_{j=0}^{\infty}(\lambda \cei{0}{x}{f})^{j+1} 1+ \sum_{j=0}^{\infty}\lambda^{j+1} (\cei{0}{x}{f})^{j+2} g+\cei{0}{x}{f}g
\\&\pomu{(*)}{=}{}  \phi_0+ \sum_{j=1}^{\infty}(\lambda \cei{0}{x}{f})^{j+1} 1+ \sum_{j=1}^{\infty}\lambda^{j+1} (\cei{0}{x}{f})^{j+2} g
\\&\pomu{(*)}{=}{}\phi(x)-\phi_0
\end{align*}
 In the step marked by (*) we use the continuity of $\cei{0}{x}{f}$ which is clear from its construction using the Banach fixed--point theorem.
Using Theorem \ref{inv-cen-ope-16}, $\phi$  solves \eqref{in-hom-ivp} and this solution is unique.
\end{proof}
\section{Stochastic process}\label{con-cen-pro}
The starting point of the censored process is always assumed to be some fixed $x>0$. We will define the censored decreasing subordinator $S^c$ by piecing together. This construction guarantees that we will get a right continuous strong Markov process by using Theorem 1.1 in \cite{1966_Nobuyuki}. The construction is : run $x-S^1_t$ until $\tau_1$, the time when $S^1$ first exits 0, where $S^1$ is an increasing subordinator  given by $f$ and  starting at 0. If $x-S^1_{\tau_1}$ is less or equal than 0, then kill the process, %otherwise piece together an independent copy $S^2$ of $S^1$ started at $x-S^1_{\tau_1}$ 
then construct an i.i.d copy $-S^2$ of $-S^1$ and start this at $x-S_{\tau_1-}$, i.e. we piece together i.i.d copies at those points in time where the process would leave $(0,\infty)$. Now repeat the same procedure for at most countably many times.  
Thus, we prove the above construction $S_t^c$ is a Feller process and which has  generator given by \eqref{cen-gen-8}, i.e $\ced{0}{\cdot}{f}$.

More formally, set $\tau_j=\sum_{i=0}^{j}\sigma_i$, we define the censored decreasing process $S^c$ as 
%\begin{alignat*}{3}
%	S_t^c=\left\{ \begin{aligned} & x-S_t^1, & \quad 0\leq t<\sigma_1,  \quad j=1, \\ &S^c_{\tau_{j-1} -}-S^j_{t-\tau_{j-1}},&\quad \tau_{j-1} \leq t<\tau_j , \quad j\geq 2,\\&
%\partial, &t\geq \tau_j,
%\end{aligned}
%\right.
%\end{alignat*}
\[
S_t^c =
\left\{ 
\begin{array}{ll}
    x-S_t^1, & 0\leq t<\sigma_1,  \quad j=1, \\
    S^c_{\tau_{j-1} -}-S^j_{t-\tau_{j-1}}, & \tau_{j-1} \leq t<\tau_j , \quad j\geq 2, \\
    \partial, & t\geq \tau_j,
\end{array}
\right.
\]
where $\partial$ is the grave yard (cemetery point) and 
%\begin{equation} 
% \left\{
%\begin{aligned}
% \partial_t \phi(t,x)&=-\ced{0}{x}{f} \phi(t,x),
%\\ \phi(0,x)&=g(x)
%\end{aligned}
%\right.
%\end{equation}
\begin{alignat*}{2}
 \sigma_j = \left\{
\begin{aligned} & 0   \quad &\mbox{if }  j=0~, \\ & \inf\{t>0: S_{t+\sigma_{j-1}}^c \leq 0\} \quad &\mbox{ if } j \in\mathbb{N}, %\\ \lim_{j\to\infty}\sum_{i=1}^{j}\sigma_i&\mbox{if } j =\infty,
 \end{aligned}
\right.
\end{alignat*}

\noindent  where $\{S^j\}_{j\in \mathbb{N}}$ is an i.i.d. collection of an increasing subordinators given by $f$.  Write $E_j$ for that the generalized inverse of  $y-S_{\tiny\bullet}^j$  i.e.  $E_j(y)=\inf\{s:y-S^j_s<0\}$, for all $j\in \mathbb{N}$, $y>0$. Further, $U(dx)=k(x)\,dx$ is the potential measure of $S^1$.% then  we have the following relation $\Ee[E_1(y)]=U(y)$ see Theorem \ref{pot-rep-sub}. 
\begin{theorem}\label{str-mar-pro} The above construction gives a  strong Markov process $S_{\tiny\bullet}^c$.
\end{theorem}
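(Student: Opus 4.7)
The plan is to verify the hypotheses of the piecing-together theorem of Ikeda--Nagasawa--Watanabe (cited as Theorem~1.1 of \cite{1966_Nobuyuki}); once this is done, the strong Markov property of $S^c$ follows directly from the conclusion of that theorem, and there is essentially nothing else to check.

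First I would identify the base Markov process that is being pieced together. It is the \emph{absorbed} decreasing subordinator
\[
Y^y_t := y - S^1_t, \qquad t < \zeta(y) := \inf\{t>0 : y - S^1_t \leq 0\} = E_1(y),
\]
sent to the cemetery $\partial$ for $t \geq \zeta(y)$. Since $S^1$ is a L\'evy process with c\`adl\`ag paths, $(Y^y_t)$ is a standard strong Markov process on $\real$, and the standard fact that a L\'evy process killed upon first exit from an open set $(0,\infty)$ remains a right-continuous strong Markov process on $(0,\infty)$ supplies the base object. The strong Markov property of $Y$ is essentially inherited from the independent-increment structure of $S^1$.

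Second I would describe the revival kernel. At the killing time $\zeta(y)$ the process is about to leave $(0,\infty)$; its left limit $Y^y_{\zeta(y)-} = y - S^1_{\zeta(y)-}$ is a.s. a strictly positive random variable, and the construction restarts the next piece from this point with an independent copy of the driver. Thus the revival kernel is
\[
\pi(y, A) := \Pp\bigl(y - S^1_{\zeta(y)-} \in A\bigr), \qquad A \in \mathcal{B}((0,\infty)).
\]
Joint measurability of $(y,\omega) \mapsto S^1_{\zeta(y)-}(\omega)$ (left limits of c\`adl\`ag processes evaluated at stopping times) makes $\pi$ a measurable sub-probability kernel on $(0,\infty)$, and the fact that the construction sends the path to $\partial$ only when $y - S^1_{\zeta(y)} \leq 0$ with $\zeta(y)$ being a jump of $S^1$ ensures the kernel maps into $(0,\infty)$, as required.

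Third I would invoke the piecing-together theorem with the base process $Y$ and revival kernel $\pi$. Writing $\tau_j = \sum_{i=0}^j \sigma_i$, the theorem produces a right-continuous strong Markov process on $(0,\infty) \cup \{\partial\}$ whose restriction to $[\tau_{j-1},\tau_j)$ coincides in law with the $j$-th piece in our construction, i.e.\ an independent copy of $Y$ started at the left limit of the previous piece at time $\tau_{j-1}$. By unicity of the construction this process equals $S^c$ pathwise on the natural filtration generated by the pieces $\{S^j\}_{j\in\nat}$, so $S^c$ itself is a right-continuous strong Markov process up to the (possibly infinite) explosion time $\tau_\infty := \lim_{j\to\infty} \tau_j$, at which point it is sent to $\partial$.

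The main obstacle is the bookkeeping for the second step: checking that the left-limit map is jointly measurable, that $\pi$ is a genuine probability (resp.\ sub-probability) kernel into $(0,\infty)$, and that the ``gluing'' at the junction times $\tau_j$ yields a càdlàg process. These are technical but standard once one exploits that $(0,\infty)$ is a Polish space and that each $S^j$ is a L\'evy process. The separate question of whether $\tau_\infty < \infty$ with positive probability---i.e.\ whether the censored process is killed in finite time---is not part of the strong Markov assertion here and is treated elsewhere in the paper.
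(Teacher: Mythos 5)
Your overall strategy---invoking the Ikeda--Nagasawa--Watanabe piecing-out theorem---is legitimate and is in fact the route hinted at in the paper itself (the construction paragraph cites Theorem 1.1 of \cite{1966_Nobuyuki}, and the remark after the theorem notes that Theorem 14.8 of \cite{1988_Sharpe} applies with the transfer kernel $K(S^c_{\tau_j-},dy)=\delta_{S^c_{\tau_j-}}(dy)$). However, your verification of the hypotheses has a genuine flaw: the revival kernel you propose, $\pi(y,A)=\Pp\bigl(y-S^1_{\zeta(y)-}\in A\bigr)$, is a kernel in the \emph{starting point} $y$ of the piece. Feeding this into the piecing-out theorem produces a process whose rebirth position is drawn from $\pi(y,\cdot)$ \emph{independently} of the pre-death path (given $y$); but in the construction of $S^c$ the rebirth position is exactly the left limit $S^c_{\tau_j-}$, a deterministic functional of the path of the previous piece. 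The two constructions have the same one-piece marginals but different joint laws (for instance, the dependence between $S^c_s$ for $s<\tau_1$ and $S^c_t$ for $t>\tau_1$ is destroyed by the independent resampling), so the glued process your argument produces is not $S^c$, and the claim in your third step that it ``equals $S^c$ pathwise'' fails. The correct input is a path-dependent instantaneous distribution, namely the Dirac mass at the left limit at the death time (Sharpe's transfer kernel), which the Ikeda--Nagasawa--Watanabe/Sharpe framework does allow; with that correction your outline can be carried through.

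For comparison, the paper's own proof does not delegate to the general theorem: it verifies the Markov identity $\Ee^x[\phi(S^c_{\eta+s})\mid\mathcal{F}_\eta]=\Ee^{S^c_\eta}[\phi(S^c_s)]$ directly, decomposing over the events $A_j=\{\tau_{j-1}\le\eta<\tau_j\}$ and $B_i=\{\tau_{i-1}\le\eta+s<\tau_i\}$ and iterating the tower property together with the i.i.d.\ structure of the pieces $\{S^j\}$. Your incidental claims (strict positivity of the undershoot, hence that the process restarts inside $(0,\infty)$) do hold, but only because Assumption \ref{ass-a} forces $b=0$ and an absolutely continuous jump measure, so the level is a.s.\ crossed by a jump; this is exactly what Lemma \ref{pro-con-pro}(i) establishes via Bertoin's Proposition 2, and it should be argued rather than asserted.
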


\begin{proof}
Set $A_j=\{\tau_{j-1}\leq \eta <\tau_j\}$ and $B_i=\{\tau_{i-1}\leq\eta+s<\tau_i\}$. Clearly, $ \linebreak \Omega=\cup_{j=1}^\infty\cup_{i=1}^\infty A_j\cap B_i$ and the union is a union of mutually disjoint sets. Therefore it is enough to consider \eqref{4-1-2} on $A_j\cap B_i$.
\begin{equation}\label{4-1-2}
\Ee^x[\phi(S^c_{\eta+s})|\mathcal{F}_\eta]=\Ee^{S^c_\eta}[\phi(S^c_{s})].
\end{equation}
Case 1 If $i=j$ we have 
\begin{align*}
&\Ee^x\left[\phi(S^c_{\eta+s}) \mathds{1}_{A_j}\mathds{1}_{B_i} |\mathcal{F}_\eta\right]
%\\&\pomu{(*)}{=}{}\Ee^x\left[\phi(S^c_{\eta+s}) \mathds{1}_{A_j}\mathds{1}_{B_i} |\mathcal{F}_\eta\right]
\\&\pomu{(*)}{=}{}\mathds{1}_{A_j} \Ee^x\left[\phi\left(S^c_{\tau_{j-1}-}-S^j_{\eta-\tau_{j-1}}+(S^j_{\eta-\tau_{j-1}}-S^j_{\eta+s-\tau_{j-1}})\right) \mathds{1}_{B_i} |\mathcal{F}_\eta\right]
\\&\omu{(*)}{=}{} \Ee^x\bigg[\phi\big(S^c_{\eta}+\underbrace{(S^j_{\eta-\tau_{j-1}}-S^j_{\eta+s-\tau_{j-1}})}_{\sim S^c_s=-S^1_s}\big) \mathds{1}_{B_i}\mathds{1}_{A_j} |\mathcal{F}_\eta\bigg]
\\&\omu{(*)}{=}{} \Ee^{S^c_\eta}\left[ S_s^c \mathds{1}_{B_i}\mathds{1}_{A_j}\right].
\end{align*}
In the steps  marked with (*) we use the following facts
\begin{enumerate}[(a)]
\item $\Ee^x\left[g(X, Y)|\mathcal{F}\right]=\Ee^x\left[g(z, Y)\right]|_{z=X}$ if $g$ is bounded and measurable and $X$ is $\mathcal{F}$--measurable and $Y$ is independent of $\mathcal{F}$.
\item $S^c_{\tau_{j-1}-}-S^j_{\eta-\tau_j}=S^c_\eta$ on $A_j$.
\item $S^j_{\eta-\tau_{j-1}}-S^j_{\eta+s-\tau_{j-1}}\sim -S^j_s\sim -S^1_s\sim S^c_s$ on $A_j\cap B_i$.
\end{enumerate}

Case 2 If $j\leq i$ we have 
\begin{align*}
&\Ee^x[\phi(S^c_{\eta+s}) \mathds{1}_{A_j}\mathds{1}_{B_i} |\mathcal{F}_\eta]
\\&\pomu{tower}{=}{}\mathds{1}_{A_j}\Ee^x[\phi(S^c_{\eta+s}) \mathds{1}_{B_i} |\mathcal{F}_\eta]
\\&\omu{tower}{=}{} \mathds{1}_{A_j}\Ee^x\left\{\Ee^x\left[\phi(S^c_{\eta+s}) \mathds{1}_{B_i}|\mathcal{F}_{\tau_{i-1}} \right] |\mathcal{F}_\eta\right\}
\\&\pomu{tower}{=}{} \mathds{1}_{A_j}\Ee^x\left\{\Ee^{S^c_{\tau_{i-1}}}\left[\phi(-S^c_{\eta+s-\tau_{i-1}}) \mathds{1}_{B_i} \right] |\mathcal{F}_\eta\right\},
%\\&\pomu{tower}{=}{} \Ee^{S^c_\eta}\big[ S_s^c \big], 
\end{align*}

\noindent where in the last equality, we use similar steps to Case 1.

\noindent We may iterate this argument, conditioning (tower property) on $\mathcal{F}_{\tau_{i-2}}$. Since $\sigma_l=\tau_l-\tau_{l-1}$, we have 
\begin{align*}
&\Ee^x\left[\phi(S^c_{\eta+s}) \mathds{1}_{A_j}\mathds{1}_{B_i} |\mathcal{F}_\eta\right] \\
&\pomu{as in Case 1}{=}{}\mathds{1}_{A_j} \Ee^x\left[\Ee^{S^c_{\tau_{k-2}}}\left[\mathds{1}_{B_i} \Ee^{S^{i-1}_{\sigma_{k-1}-}}\left[\phi(-S^k_\rho)\right]_{\rho=\eta+s-\tau_{i-1}}\right]|\mathcal{F}_\eta\right]
%\\&\pomu{as in Case 1}{=}{}....
\\&\pomu{as in Case 1}{=}{}\mathds{1}_{A_j} \Ee^x\left[\mathds{1}_{B_i} \Ee^{S^c_{\tau_j}}\left[\phi(S^{j+1}_{\sigma_{j+1}-}+...+ S^{i-1}_{\sigma_{i-1}-} -S^k_\rho)\right]_{\rho=\eta+s-\tau_{i-1}}|\mathcal{F}_\eta\right]
\\&\omu{as in Case 1}{=}{}\Ee^{S_\eta^c}\left[\mathds{1}_{A_j}\mathds{1}_{B_i} \phi(S_s^c)\right].
\end{align*}
Since on $A_j\cap B_i$, $S^c_s$ and $S^j_{\tau_j-\eta}+S^{j+1}_{\sigma_{j+1}-}+...+S^i_{\eta+s-\tau_{i-1}}$ have the same law.
\end{proof}
\begin{remark} Using \cite{1988_Sharpe} Theorem 14.8, we can show the censored process is a strong Markov process. Take transfer kernel $K(S_{\tau_j}^c, dy)=\delta_{S_{\tau_j-}^c}(dy)$, where $\delta$ is  the $\delta$--distribution.
\end{remark}
Denote by $U(dy)=k(y)\, dy$ the potential measure, $1/f(\lambda)=\Lscr(k, \lambda)=\Lscr(U(dx), \lambda)$, see Remark 1.5.11 in \cite{2023_Li}
 %and Theorem \ref{pot-rep-sub} 
 and $k_1(x, r)=\bar\mu(r)k(x-r)$ see Definition \ref{ker-def-1}.   
\begin{lemma}\label{pro-con-pro} For any $x>0$ and $j\in \mathbb{N}$,  assuming $S_0^c=x$, we have 
\begin{enumerate}[(i)]
\item $\Ee^x[\tau_j]<\infty$, $\mathbb{P}[S_{\tau_j}^c\in (0,x)]=1$ and $S_{\tau_j}^c$ has the density $k_j(x,\cdot)$, as defined in \eqref{kernel-def};
\item  $\Ee^x [\sigma_{j+1}]=\Ee[E_{j+1}(S_{\tau_j}^c)]=\int_0^x U(y)k_j(x,y)\,dy$;
\item $\mathbb{P}^x[\lim_{j\to\infty}\tau_j<\infty]=1$ and $\mathbb{P}^x[S^c_{\lim_{j\to\infty}\tau_j-}=0]=1$.
\end{enumerate}
\end{lemma}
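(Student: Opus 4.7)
The plan is to handle (i) by combining the classical undershoot identity for a drift-free subordinator with the strong Markov property (Theorem \ref{str-mar-pro}), to get (ii) from the standard first-passage mean formula, and to derive (iii) by exploiting the geometric bound $\mathcal{K}^j F(x)\leq q^j F(x)$ of Lemma \ref{ope-bou}. For (i), I begin with $j=1$: since $S^c_{\tau_1}=x-S^1_{\sigma_1-}$ and $\sigma_1=\inf\{t:S^1_t>x\}$, the classical undershoot identity for a drift-free subordinator,
\[
\Pp\!\left(S^1_{\sigma_1-}\in du\right)=k(u)\bar\mu(x-u)\,du,\quad u\in(0,x),
\]
gives, after the substitution $r=x-u$, the density $\bar\mu(r)k(x-r)=k_1(x,r)$ on $(0,x)$. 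For $j\geq 2$, I invoke the strong Markov property at $\tau_{j-1}$: conditional on $S^c_{\tau_{j-1}}=y$, the post-$\tau_{j-1}$ process is an independent censored subordinator starting at $y$, so $S^c_{\tau_j}$ given $S^c_{\tau_{j-1}}=y$ has density $k_1(y,\cdot)$ on $(0,y)$. Integrating against the inductive density of $S^c_{\tau_{j-1}}$ produces
\[
\Pp^x\!\left(S^c_{\tau_j}\in dr\right)=\int_r^x k_{j-1}(x,y)k_1(y,r)\,dy\,dr,
\]
which coincides with $k_j(x,r)\,dr$ because the recursion in Definition \ref{ker-def-1} is symmetric in its two natural forms (by associativity of $\mathcal{K}$). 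That $S^c_{\tau_j}\in(0,x)$ follows since $k_j(x,\cdot)$ is supported in $(0,x)$ with total mass $1$.

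For (ii), the strong Markov property at $\tau_j$ yields that, conditional on $S^c_{\tau_j}=y$, the interval $\sigma_{j+1}$ is distributed as $E_{j+1}(y)=\inf\{t:S^{j+1}_t>y\}$. Since $f$ has no drift, Fubini gives the classical identity
\[
\Ee[E_{j+1}(y)]=\int_0^\infty\Pp(S^{j+1}_t\leq y)\,dt=U([0,y])=\int_0^y k(s)\,ds.
\]
Combining this with the density from (i) gives $\Ee^x[\sigma_{j+1}]=\int_0^x U(y)k_j(x,y)\,dy$, which is the claim.

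For (iii), write $F(y):=U([0,y])=\int_0^y k(s)\,ds$. By (ii), $\Ee^x[\sigma_{j+1}]=\int_0^x F(y)k_j(x,y)\,dy=\mathcal{K}^j F(x)$. Lemma \ref{ope-bou} applied with $\varphi=F$ and $M=1$ gives $\mathcal{K}^j F(x)\leq q^j F(x)$, so
\[
\Ee^x[\tau_\infty]=\sum_{j=0}^\infty\mathcal{K}^j F(x)\leq\frac{F(x)}{1-q}<\infty,
\]
hence $\tau_\infty<\infty$ almost surely. For the second part, $(S^c_{\tau_j})_j$ is non-increasing and non-negative, so $Y_\infty:=\lim_j S^c_{\tau_j}$ exists a.s. Bounded convergence (with dominating constant $F(x)$) together with $\Ee^x[F(S^c_{\tau_j})]=\mathcal{K}^j F(x)\to 0$ yields $\Ee^x[F(Y_\infty)]=0$, whence $F(Y_\infty)=0$ a.s. Since $k$ is a non-trivial completely monotone density (Assumption \ref{ass-a}), $k>0$ on $(0,\infty)$, so $F$ is strictly increasing with $F(0)=0$, forcing $Y_\infty=0$ a.s. As $S^c$ is non-increasing and c\`adl\`ag with $S^c_{\tau_j-}=S^c_{\tau_j}=Y_j\downarrow 0$, we conclude $S^c_{\tau_\infty-}=0$ a.s.

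The main obstacle I anticipate is the clean bookkeeping of the strong Markov argument across countably many pieces in (i) and (ii), together with the verification that both natural recursions for $k_j$ agree; once the identity $\Ee^x[\sigma_{j+1}]=\mathcal{K}^j F(x)$ is in hand, the heart of (iii) collapses to the geometric series already tamed in Section 3.
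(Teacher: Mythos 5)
Your proposal is correct and follows essentially the same route as the paper: the Bertoin undershoot identity gives the density $k_1(x,\cdot)$ of $S^c_{\tau_1}$, the strong Markov property plus induction yields $k_j(x,\cdot)$, the identity $\Ee[E_{j+1}(y)]=U(y)$ gives (ii), and the geometric bound $\mathcal{K}^j F(x)\le q^j F(x)$ from Lemma \ref{ope-bou} gives $\Ee^x[\tau_\infty]<\infty$ and $\Ee^x[F(S^c_{\tau_j})]\to 0$ for (iii). The only differences are cosmetic: you conclude $S^c_{\tau_\infty-}=0$ via monotonicity and dominated convergence rather than the paper's Chebyshev--union-bound argument, and you explicitly note the (harmless) composition-order issue in the recursion for $k_j$, which the paper passes over silently.
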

\begin{proof}
 (i)  We use induction. If $j=1$ we have $$\Ee[\tau_1(y)]=\Ee[E_1(y)]=U(y)<\infty,$$ where we use $E_1(y)=\inf\{s:y-S^1_s<0\}$ and $\Ee[E_1(y)]=U(y)$. %see Theorem \ref{pot-rep-sub}. 
 From  \cite{1996_Bertoin}  Chapter 3, Proposition 2 we know that  $$\mathbb{P}(x-S_{\sigma_1-}\in dy, x-S_{\tau_1}\in dz)=U(dy)\mu(dz-y),$$ where $\tau(x)=\inf\{t:S_t\geq x\}.$ As $\tau_1=\tau(x)$ under $\mathbb{P}^x$, we get for $0\leq a\leq x$,
\begin{align*}
\Pp^x(S^c_{\tau_1-}\in (a, x])
&=\Pp(x-S^c_{\tau(x)-}\in (a, x])
\\&=\Pp(x-S^c_{\tau(x)-}\in (a, x], S_{\tau(x)}\geq x)
\\&=\Pp(S^c_{\tau(x)-}\in ([0,, x-a), S_{\tau(x)}\geq x)
\\&=\int_0^{x-a}\bar\mu(x-y)k(y)\,dy.
\end{align*}
This shows that $k_1(x,r)=\bar\mu(x-y)k(y)$ is the density of $S^c_{\tau_1-}$ under $\Pp^x$.
%(i) Using induction . For $j=1$, 
%$\Ee[\tau_1(y)]=\Ee[E_1(y)]=U(y)<\infty$. Using the \cite{1996_Bertoin}  Chapter 3, Proposition 2 ( $\mathbb{P}(x-S_{\sigma_1-}\in dy, x-S_{\tau_1}\in dz)=U(dy)\mu(dz-y)$), we can get $S_{\sigma_1}^c=x+S_{\sigma_1-}^1$ has density $k_1(x,\cdot)$.  

Assume $j\geq 1$, $\tau_j<\infty, S_{\tau_j}^c>0$ and $S_{\tau_j}^c$ independent of $S^{j+1}$. According to the strong Markov property,  we have 
\begin{align*}
\sigma_{j+1}&=\inf \{r>0: S^c_{\tau_j}<S^{j+1}_{r}\}
\\&=E_{j+1}(S^c_{\tau_j})
\end{align*}
Using $S^c_{\tau_j}<x$, we can get the following estimate
$$\Ee[\tau_{j+1}]=\Ee[E_{j+1}(S^c_{\tau_j})]+\Ee[\tau_j]\leq \Ee[E_{j+1}(x)]+\Ee[\tau_j]<\infty.$$  

Next we show $S_{\tau_{j+1} }^c\in (0,x)$. Using the expression of $\sigma_{j+1}$, we have $$S_{\tau_{j+1}  }^c=S_{\tau_j}^c - S^{j+1}_{\sigma_{j+1}-}=S_{\tau_j}^c -S^{j+1}_{E_{j+1}(S^c_{\tau_j})-}\in (0, S^c_{\tau_j})\subset (0,x).$$
Finally, we show $S_{\tau_{j+1} }^c$ has density $k_{j+1}(x,\cdot)$. For any bounded measurable $\phi$, we have 
\begin{align*}
\Ee^x \left[\phi(S_{\tau_{j+1} }^c)\right]&\pomu{Fubini}{=}{}\Ee^x \left[\phi(S_{\tau_j }^c - S^{j+1}_{E_{j+1}(S^c_{\tau_j})-})\right]
\\&\pomu{Fubini}{=}{}\Ee\left[\int_0^x\phi(y-S^{j+1}_{E_{j+1}(S^c_{\tau_j })-})\right]\mathbb{P}(S_{\tau_j }^c\in dy)
\\&\pomu{Fubini}{=}{}\Ee\left[\int_0^x\phi(y-S^{j+1}_{E_{j+1}(S^c_{\tau_j })-})\right]k_j(x,y)\,dy
\\&\pomu{Fubini}{=}{}\int_0^x\Ee^y\left[\phi(-S^{j+1}_{E_{j+1}(S^c_{\tau_j})-})\right]k_j(x,y)\,dy
\\&\pomu{Fubini}{=}{}\int_0^x\int_0^y \phi(z)k_1(y,z)\,dz \,k_j(x,y)\,dy
\\&\pomu{Fubini}{=}{}\int_0^x \phi(z)\int_z^x k_j(x,y)k_1(y,z)\,dy\,dz
\end{align*}
The fourth equality holds because $\{-S^j\}_{j\in \mathbb{N}}$ is an i.i.d. collection of subordinators and  $-S^{j+1}_{E_{j+1}(S^c_{\tau_j})}\stackrel{\Pp^y}{\sim} -S^1_{E_1}(y)$  has density $k_1(y,\cdot)$ and $S^c_{\sum_{i=1}^{j} \sigma_i }$ is independent of $S^{j+1}$. Using Definition \ref{ker-def-1}, we know that $S_{\sum_{i=1}^{j+1} \sigma_i }^c$ has density $k_{j+1}(x,\cdot)$.
%\bigskip

(ii) Using the results of (i),  we have $\Ee[\sigma_{j+1}]=\Ee[E_{j+1}(S^c_{\tau_j })]$ and  $S^c_{\tau_j}$ is independent of $S^{j+1}$. By the definition of $E_j$ we have,
\begin{align*}
\Ee^x\left[\sigma_{j+1}\right]&=\Ee^x\left[E_{j+1}(S^c_{\tau_j })\right]
\\&=\Ee\left[\int_0^xE_{j+1}(y)k_j(x,y)\,dy\right]
\\&=\int_0^x\Ee\left[E_{j+1}(y)\right]k_j(x,y)\,dy
\\&=\int_0^xU(y)k_j(x,y)\,dy.
\end{align*}
%\bigskip

(iii)  From the Markov inequality $$n\mathbb{P}(\tau_\infty>n)\leq \Ee^x[\tau_\infty], \quad n \to \infty.$$ We see that it is enough to show that $\Ee^x(\tau_\infty)<\infty$ to get $\Pp(\tau_\infty<\infty)$.
%Using the following inequality, $n\mathbb{P}(\tau_\infty>n)\leq \Ee^x[\tau_\infty], n \to \infty$.
Now by Beppo Levi 
\begin{align*}
\Ee^x (\tau_\infty)&=\lim_{j\to\infty}\Ee\left[\sum_{i=1}^{j} \sigma_i \right]=\lim_{j\to\infty}\sum_{i=1}^j\Ee[\sigma_i]
\\&=\lim_{j\to\infty}\sum_{i=1}^j\int_0^xU(y)k_{i-1}(x,y)\,dy
\\&=\sum_{i=1}^\infty \int_0^xU(y)k_{i-1}(x,y)\,dy
\\&=\sum_{i=1}^\infty \mathcal{K}^{i-1}U(x)<\infty,
\end{align*}
For the last inequality, we use the definition of $U(y)=\int_0^yk(x)\,dx$ and that fact that $U$ satisfies the condition of Lemma \ref{ser-cov-14}.

Next we show $\mathbb{P}^x[S^c_{\lim_{j\to\infty}\tau_j-}>0]=0$.  $$\mathbb{P}^x\left[S^c_{\lim_{j\to\infty}\tau_j-}>0\right]\leq \sum_{n=1}^\infty \mathbb{P}\left[S^c_{\lim_{j\to\infty}\tau_j-}>\frac1{n}\right], $$
and for each $n\in \mathbb{N}, $, using the following results $\left\{S^c_{\lim_{j\to\infty}\tau_{j+1}}>\frac1{n}\right\}\subset \left\{S^c_{\tau_j}>\frac1{n}\right\}$, $$\mathbb{P}^x\left[S^c_{\lim_{j\to\infty}\tau_j-}>\frac1{n}\right]\leq\mathbb{P}^x\left[\cap_{j=1}^\infty\{S^c_{\tau_j}>\frac1{n}\}\right]=\lim_{j\to\infty}\mathbb{P}^x\left[S^c_{\tau_j}>\frac1{n}\right].$$
  
 Let $F$ be a strictly increasing function, e.g. $F(y)=\int_0^y k(s)\,ds$, $F(0)=0$,  then  by Chebyshev's inequality, 
\begin{align*}
\mathbb{P}^x\left[S^c_{\tau_j}>\frac1{n}\right]&=\mathbb{P}^x\left[F(S^c_{\tau_j})>F\left(\frac1{n}\right)\right]
\\&\leq \frac1{F(\frac1{n})}\Ee\left[F(S^c_{\tau_j})\right]
%\int_0^x k_j(x,y)y\, dy
\\&=\frac1{F\left(\frac1{n}\right)}\int_0^x k_j(x,y)\int_0^y k(s)\,ds\, dy \xrightarrow[j\to \infty]{}0,
\end{align*} 
Indeed, since $\sum_{j=0}^\infty\int_0^x k_j(x,y)\int_0^y k(s)\,ds\, dy$ is finite,  by the results of Lemma \ref{ser-cov-14}, we obtain $\int_0^x k_j(x,y)\int_0^y k(s)\,ds\, dy$ goes to 0, as $j$ goes to $\infty$.
%{\cailing }
\end{proof}

\section{Probabilistic representation}
Let $(S_t)_{t\geq 0}$ be the subordinator corresponding to the Bernstein function $f$.  As before, $U(dx)$ is the potential measure i.e. $1/f(\lambda)=\Lscr(k, \lambda)=\Lscr(U(dx), \lambda)$ and $\tau_1=\tau(x)$ is the first time $S_t$ cross the level $x$. Again we assume that  Assumption \ref{ass-a} from Section 2 holds.
\begin{lemma}\label{pro-rep-pot}  Assume $S_t^1$ has  transition probability $\mathbb{P}^x(-S_t^1 \in dy)=p_t(x-dy)$, we have the following equality:
\begin{equation}
\rli{0}{x}{f} g(x)=\Ee^x\left[\int_0^{\tau_1} g(-S_s^1)\,ds\right]
\end{equation}
\begin{proof} The results follow from a direct calculation, we have 
\begin{align*}
\Ee^x\left[\int_0^{\tau_1} g(-S_s^1)ds\right]&\pomu{\eqref{pot-rep}}{=}{}\Ee^x\left[\int_0^\infty g(-S_s^1) 1_{\{s\leq \tau_1\}}\,ds\right]
\\&\pomu{\eqref{pot-rep}}{=}{}\Ee^x\left[\int_0^\infty g(-S_s^1) 1_{\{x-S_s^1\leq 0\}}\,ds\right]
\\&\pomu{\eqref{pot-rep}}{=}{}\int_0^\infty\int_0^x  g(y) \mathbb{P}^x(-S_s^1 \in \,dy) \,ds
\\&\pomu{\eqref{pot-rep}}{=}{}\int_0^x \int_0^\infty g(y)\, p_s(x-dy) \,ds
%\\&\pomu{\eqref{pot-rep}}{=}{}\int_0^x g(y)  \int_0^\infty p_s(x-dy)\,ds
\\&\pomu{\eqref{pot-rep}}{=}{}\int_0^x g(y)\, U(x-dy). 
\end{align*}

Since the Sonine pair $(\bar\mu, k)$ consists of functions because of Assumption \ref{ass-a}, we have $U(dx)=k(x)\,dx$, so 
$$\Ee^x\left[\int_0^{\tau_1} g(-S_s^1)ds\right]=\int_0^x g(y)k(x-y) \,dy.$$ Using the definition of the Riemann--Liouville integral  we get 
%$$\rli{0}{x}{f} g(x)=\Ee^x[\int_0^{\tau_1} g(-S_s^1)ds].$$
\begin{gather*}
\rli{0}{x}{f} g(x) = \Ee^x \left[ \int_0^{\tau_1} g(-S_s^1) \, ds \right]. \qedhere
\end{gather*}
\end{proof}
\end{lemma}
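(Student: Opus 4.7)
The plan is to convert the expected occupation--time integral along the subordinator into an integral against the potential measure $U$ of $S^1$, and then invoke Assumption \ref{ass-a} to write $U(dz)=k(z)\,dz$ so that the resulting expression is literally $\rli{0}{x}{f}g(x)$ by Definition \ref{mar-int-defin}.

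First I would extend the random time integral to all of $[0,\infty)$ using an indicator. Under $\Pp^x$ the process $-S_s^1$ starts at $x$ and $\tau_1$ is the first exit time of $x-S_s^1$ from $(0,\infty)$, so $\{s<\tau_1\}=\{-S_s^1\ge 0\}$ up to a $ds$--null set. Assuming for now $g\ge 0$ (the signed case follows by the usual decomposition), Tonelli's theorem yields
\begin{align*}
\Ee^x\left[\int_0^{\tau_1} g(-S_s^1)\,ds\right]
&= \Ee^x\left[\int_0^\infty g(-S_s^1)\,\mathds{1}_{\{-S_s^1\ge 0\}}\,ds\right]\\
&= \int_0^\infty \int_0^x g(y)\,\Pp^x(-S_s^1\in dy)\,ds.
\end{align*}

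Second I would use the hypothesis $\Pp^x(-S_s^1\in dy)=p_s(x-dy)$ and swap integrations once more to recognize the time integral as the potential measure of $S^1$:
$$
\int_0^\infty\!\!\int_0^x g(y)\,p_s(x-dy)\,ds
=\int_0^x g(y)\,U(x-dy),
$$
where $U(dz)=\int_0^\infty \Pp^0(S_s^1\in dz)\,ds$. By Assumption \ref{ass-a}(1) and the Laplace identity \eqref{pot-eq}, $U$ is absolutely continuous with density $k$, so after the change of variable $z=x-y$,
$$
\int_0^x g(y)\,U(x-dy)=\int_0^x g(y)\,k(x-y)\,dy=\rli{0}{x}{f}g(x),
$$
which is the claim.

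The only real work is bookkeeping: justifying Tonelli (immediate for $g\ge 0$ or bounded, then extended by splitting $g=g^+-g^-$), and being careful with orientation when passing from the law of $-S_s^1$ under $\Pp^x$ to the potential measure of $S^1$ started at $0$. I expect no substantive obstacle beyond these checks --- the identity is essentially Fubini plus the definitions of the potential measure and of $\rli{0}{x}{f}$, once the interpretation of $\tau_1$ as the first exit of $x-S_s^1$ from $(0,\infty)$ is made precise.
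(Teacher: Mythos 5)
Your proposal is correct and follows essentially the same route as the paper's proof: extend the integral past $\tau_1$ with an indicator, apply Fubini--Tonelli, identify the time--integrated law as the potential measure $U$, and use Assumption \ref{ass-a} together with \eqref{pot-eq} to write $U(dz)=k(z)\,dz$ and recognize $\rli{0}{x}{f}g$. Your extra care with Tonelli (treating $g\ge 0$ first) and with the indicator set (the paper's $\{x-S_s^1\le 0\}$ is a sign slip for the event $\{s\le\tau_1\}$) only tightens the same argument.
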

% \begin{remark} Here we use $\eqref{pot-rep}$. It also true $U(dx)=\int_0^\infty p_s(dx)\,ds$ in the vague topology. i.e. $\langle U, \phi \rangle=\langle\int_0^\infty p_s(\cdot)\,ds, \phi \rangle$ where $\langle m, \phi \rangle=\int \phi \,dm$, for all  $\phi \in C_c^\infty$.
% \end{remark}
\begin{theorem}\label{pot-rep-cen-int} If $g$ satisfies the conditions of Theorem \ref{inv-cen-ope-16},  we have the following  representation
\begin{equation}
\cei{0}{x}{f}g(x)=\sum_{j=0}^\infty \Ee^x \left[\rli{0}{S^c_{\tau_j}}{f}g(S^c_{\tau_j})\right]= \Ee^x \left[\int_0^{\tau_\infty}g(S^c_s)\,ds\right],
\end{equation}
where $\tau_1, \tau_2....$ are the stopping times from the construction of Section 6.
\end{theorem}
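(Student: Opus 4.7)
The plan is to combine the series expansion of $\cei{0}{x}{f}g$ from Theorem~\ref{inv-cen-ope-16}, the probabilistic representation of $\rli{0}{x}{f}g$ in Lemma~\ref{pro-rep-pot}, and the regenerative structure of the censored process from Section~\ref{con-cen-pro}. The first equality is essentially a translation of the operator series into a sum of expectations over the positions $S_{\tau_j}^c$, while the second equality identifies each summand with the expected contribution of the path over a single excursion interval $[\tau_j,\tau_{j+1})$.

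For the first equality, I would start from the identity $\cei{0}{x}{f}g=\sum_{j=0}^\infty \mathcal{K}^j\rli{0}{x}{f} g$ proved in Theorem~\ref{inv-cen-ope-16}. By Lemma~\ref{pro-con-pro}(i), the random variable $S_{\tau_j}^c$ has density $k_j(x,\,\cdot\,)$ on $(0,x)$ under $\Pp^x$ for every $j\geq 1$, while $S_{\tau_0}^c=x$ by the convention $\tau_0=0$. Consequently,
\[
\Ee^x\!\left[\rli{0}{S_{\tau_j}^c}{f}g(S_{\tau_j}^c)\right]=\int_0^x k_j(x,r)\,\rli{0}{r}{f}g(r)\,dr=\mathcal{K}^j\rli{0}{x}{f}g(x),
\]
and summing over $j\geq 0$ yields the first equality directly. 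The locally uniform convergence of the resulting series in $C[0,T]$ was already established in the proof of Theorem~\ref{inv-cen-ope-16} via Lemma~\ref{ser-cov-14}, so no additional convergence issue arises.

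For the second equality, the key step is the one-step regeneration identity
\[
\Ee^x\!\left[\int_{\tau_j}^{\tau_{j+1}} g(S_s^c)\,ds\right]=\Ee^x\!\left[\rli{0}{S_{\tau_j}^c}{f}g(S_{\tau_j}^c)\right].
\]
To prove it, I would condition on $\mathcal{F}_{\tau_j}$ and invoke the piecewise construction of $S^c$: on $[\tau_j,\tau_{j+1})$ we have $S^c_{\tau_j+u}=S^c_{\tau_j}-S^{j+1}_u$, and $\sigma_{j+1}=E_{j+1}(S^c_{\tau_j})$ is precisely the first time the path $y-S^{j+1}_u$ exits $(0,\infty)$ for $y=S^c_{\tau_j}$. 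Since $S^{j+1}$ is independent of $\mathcal{F}_{\tau_j}$ by construction, the substitution $u=s-\tau_j$ together with an application of Lemma~\ref{pro-rep-pot} at $y=S^c_{\tau_j}$ gives the identity. Summing over $j\geq 0$, using the $\Pp^x$--a.s.\ disjoint decomposition $[0,\tau_\infty)=\bigcup_{j\geq 0}[\tau_j,\tau_{j+1})$ from Lemma~\ref{pro-con-pro}(iii), and interchanging sum and expectation by Tonelli (for $g\geq 0$, then decomposing $g=g^+-g^-$ using the absolute convergence of $\sum_j \Ee^x[\rli{0}{S_{\tau_j}^c}{f}|g|(S_{\tau_j}^c)]=\cei{0}{x}{f}|g|<\infty$ inherited from Theorem~\ref{inv-cen-ope-16}) completes the proof.

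The main obstacle is the regeneration step: one must argue carefully that $\mathcal{F}_{\tau_j}$ captures only the information accumulated up to $\tau_j$ and that $S^{j+1}$ is independent of it, which is guaranteed by the i.i.d.\ piecewise gluing used to define $S^c$ (and formalized in Theorem~\ref{str-mar-pro}). Beyond that, everything reduces to Fubini-type manipulations and the explicit series representations already established.
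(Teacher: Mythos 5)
Your proposal is correct and follows essentially the same route as the paper: decompose $[0,\tau_\infty)$ into the excursion intervals $[\tau_j,\tau_{j+1})$, use the regenerative structure and Lemma \ref{pro-rep-pot} to identify each piece with $\Ee^x[\rli{0}{S^c_{\tau_j}}{f}g(S^c_{\tau_j})]$, and then use the density $k_j(x,\cdot)$ of $S^c_{\tau_j}$ together with the series of Theorem \ref{inv-cen-ope-16} to recover $\cei{0}{x}{f}g$. Your explicit treatment of general signed $g$ via $g=g^+-g^-$ is a small addition beyond the paper's argument, which only records the case $g\geq 0$, but the approach is the same.
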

\begin{proof} First, we assume $g\geq0$. Making the change of variable $s-\tau_j=u$, we have 
\begin{align*}
\Ee^x\left[\int_0^{\tau_\infty}g(S^c_s)\,ds\right]&\pomu{(*)}{=}{}\Ee^x\sum_{j=0}^\infty \left[\int_{\tau_j}^{\tau_{j+1}}g(S^c_s)\,ds\right]
%\\& \pomu{Lemma \ref{pro-con-pro}}{=}{}\sum_{j=0}^\infty \Ee^x \left[\int_{\tau_j}^{\tau_{j+1}}g(S^c_s)\,ds\right]
%\\&\pomu{Lemma \ref{pro-con-pro}}{=}{}\sum_{j=0}^\infty \Ee^x \big[\int_{\tau_j}^{\tau_{j+1}}g(S^c_s)ds\big]
\\&\pomu{(*)}{=}{}\sum_{j=0}^\infty \Ee^x \left[\int_0^{\tau_{j+1}-\tau_j}g(S^c_{u+\tau_j})\,du\right]
\\&\pomu{(*)}{=}{}\sum_{j=0}^\infty \Ee^x \left[\int_0^{E_{j+1}(S^c_{\tau_j})}g(S^c_{u+\tau_j})\,du\right]
\\&\pomu{(*)}{=}{}\sum_{j=0}^\infty \Ee^x \left[\int_0^{E_{j+1}(S^c_{\tau_j})}g(S^{j+1}_u+S_{\tau_j}^c)\,du\right]
%\\&\pomu{Lemma \ref{pro-con-pro}}{=}{}\sum_{j=0}^\infty \Ee^x \left[\Ee \left[\int_0^{E_{j+1}(S^c_{\tau_j})}g(S^{j+1}_u+S_{\tau_j}^c)\,du\;\big |\; S_{\tau_j}^c \right]\right]
\\&\pomu{(*)}{=}{}\sum_{j=0}^\infty \Ee^x \left[\Ee^ {S_{\tau_j}^c} \left[\int_0^{E_{j+1}(S^c_{\tau_j})}g(S^{j+1}_u)\,du \right]\right]
\\&\omu{(*)}{=}{}\sum_{j=0}^\infty \Ee^x\left[\rli{0}{S^c_{\tau_j}}{f} g(S^c_{\tau_j})\right]
\\&\pomu{(*)}{=}{}\sum_{j=0}^\infty \mathcal{K}^j \rli{0}{x}{f} g(x) 
\\&\pomu{(*)}{=}{} \cei{0}{x}{f}g(x). %\qedhere
\end{align*}
In the equality marked by (*) we use Lemma \ref{pro-rep-pot}.
\end{proof}
Recall that $C_\infty(0, T]=\overline{C_c(0, T]}^{\|\cdot\|_\infty}=\{u\in C(0, T]: u(0+)=0\}$. In this context,  the operator $-\ced{0}{\tiny\bullet}{f}$ gives a stochastic process by Hille--Yosida--Ray theorem. 
\begin{theorem}\label{fell-pro} For any $T>0$, the process $S_t^c $ gives a Feller semigroup on $C_\infty(0,T]$, whose generator is $(-\ced{0}{\tiny\bullet}{f}, \cei{0}{\tiny\bullet}{f}C_\infty(0,T])$.
\end{theorem}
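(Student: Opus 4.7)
My plan is to apply the Hille--Yosida--Ray theorem to the operator $-\ced{0}{\cdot}{f}$ on $C_\infty(0,T]$ with domain $D = \cei{0}{\cdot}{f}C_\infty(0,T]$, together with the probabilistic representation from Theorem \ref{pot-rep-cen-int} to identify the resolvent of $S^c$ with the inverse of $\lambda I + \ced{0}{\cdot}{f}$. Concretely, the four ingredients to assemble are: (a) $-\ced{0}{\cdot}{f}$ satisfies the positive maximum principle on $D$, (b) the range $(\lambda I+\ced{0}{\cdot}{f})D$ is dense in $C_\infty(0,T]$, (c) $D$ itself is dense in $C_\infty(0,T]$, and (d) the semigroup $T_tf(x):=\Ee^x[f(S_t^c)]$ (with $f(\partial)=0$) coincides with the Hille--Yosida--Ray semigroup generated by $-\ced{0}{\cdot}{f}$. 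Once these are in place, Feller regularity follows from the standard Hille--Yosida--Ray statement and the Markov property established in Theorem \ref{str-mar-pro}.

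First I would check the positive maximum principle: if $\phi\in D$ attains a nonnegative maximum at $x_0\in(0,T]$, then
\begin{align*}
-\ced{0}{x_0}{f}\phi(x_0) = -\int_0^{x_0}\bigl(\phi(x_0)-\phi(x_0-s)\bigr)\mu(ds) - \phi(x_0)\bar\mu(x_0)\le 0,
\end{align*}
and a corresponding computation handles the boundary behaviour $x_0\to 0$ using $\phi\in C_\infty(0,T]$. This yields dissipativity, so $\lambda I+\ced{0}{\cdot}{f}$ is injective for $\lambda>0$. Next, the range and resolvent identification come essentially for free from Theorem \ref{inh-ivp}: for any $g\in C[0,T]$ the inhomogeneous problem $\ced{0}{x}{f}\phi = -\lambda\phi+g$, $\phi(0)=0$ has the explicit solution $\phi=\sum_{j\ge 0}(-\lambda)^j(\cei{0}{x}{f})^{j+1}g\in C_{\bar\mu}[0,T]$, so $\lambda I+\ced{0}{\cdot}{f}$ is surjective onto $C[0,T]$ with a positivity-preserving inverse, and restricting to $g\in C_\infty(0,T]$ places $\phi$ in $D$. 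Density of $D$ and of $(\lambda I+\ced{0}{\cdot}{f})D$ in $C_\infty(0,T]$ then follows: for the latter, one uses the fact that $C[0,T]\cap \{h(0)=0\}$ is a dense subset of $C_\infty(0,T]$ and that for $\lambda$ large $\lambda^{-1}(\lambda I+\ced{0}{\cdot}{f})$ is close to the identity on $D$.

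The identification of the semigroup is the key probabilistic step. From Theorem \ref{pot-rep-cen-int} the $0$-potential operator of $S^c$ acting on $g\in C_\infty(0,T]$ equals $\cei{0}{x}{f}g$. Introducing an extra $e^{-\lambda s}$ factor, the same piecing-together argument used there (conditioning on each $\tau_j$ and using the strong Markov property from Theorem \ref{str-mar-pro}) shows that the $\lambda$-potential $U_\lambda g(x)=\Ee^x\!\int_0^{\tau_\infty}e^{-\lambda s}g(S_s^c)\,ds$ satisfies the same resolvent equation $(\lambda I+\ced{0}{\cdot}{f})U_\lambda g=g$, and by the uniqueness in Theorem \ref{inh-ivp}, $U_\lambda=(\lambda I+\ced{0}{\cdot}{f})^{-1}$. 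Since $\Pp^x(\tau_\infty<\infty)=1$ (Lemma \ref{pro-con-pro}(iii)) the semigroup is honest and conservative off the cemetery, contractive and positivity-preserving, and by the Hille--Yosida--Ray theorem it equals the Feller semigroup generated by $-\ced{0}{\cdot}{f}$ on $D$.

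The main obstacle I anticipate is the Feller property itself, i.e. showing $T_tf\in C_\infty(0,T]$ for $f\in C_\infty(0,T]$. Continuity of $x\mapsto\Ee^x[f(S_t^c)]$ is delicate because the construction pastes independent copies of $-S^j$ at the exit times $\tau_j$, so small perturbations of $x$ can shift the whole exit sequence; I would handle this by expressing $T_t f$ through the resolvent $U_\lambda f\in D\subset C_\infty(0,T]$ (via the Hille--Yosida exponential formula $T_t f=\lim_{n}(I+(t/n)\ced{0}{\cdot}{f})^{-n}f$), thereby bypassing a direct analysis of sample-path continuity in the starting point. The vanishing at $0$ follows from $f\in C_\infty(0,T]$ together with Lemma \ref{pro-con-pro}(iii), which forces $S^c$ to approach the boundary in finite time.
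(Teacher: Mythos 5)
Your route inverts the paper's logic: you want the Feller property to come out of the Hille--Yosida--Ray machinery and then be transferred to $P_t^c\phi(x)=\Ee^x[\phi(S_t^c)]$ by identifying the $\lambda$-potential $U_\lambda g(x)=\Ee^x\big[\int_0^{\tau_\infty}e^{-\lambda s}g(S_s^c)\,ds\big]$ with $(\lambda I+\ced{0}{\cdot}{f})^{-1}g$. Everything hinges on that identification, and the justification you offer --- ``the same piecing-together argument'' as in Theorem \ref{pot-rep-cen-int} --- does not go through as stated. In that proof the $j$-th excursion contributes $\Ee^x\big[\rli{0}{S^c_{\tau_j}}{f}g(S^c_{\tau_j})\big]=\mathcal{K}^j\rli{0}{x}{f}g(x)$ precisely because only the marginal density $k_j(x,\cdot)$ of $S^c_{\tau_j}$ enters. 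With the discount factor the $j$-th term becomes $\Ee^x\big[e^{-\lambda\tau_j}h_\lambda(S^c_{\tau_j})\big]$, where $h_\lambda(y)=\Ee^y\big[\int_0^{\tau_1}e^{-\lambda u}g(y-S^1_u)\,du\big]$, and $e^{-\lambda\tau_j}$ is \emph{not} independent of $S^c_{\tau_j}$; you would need the joint law of $(\tau_j,S^c_{\tau_j})$, which is nowhere computed and does not reduce to the kernels $k_j$. You also cannot quote the paper's final theorem (the formula $\Ee^x\big[\int_0^{\tau_\infty}e^{\lambda t}g(S^c_t)\,dt\big]=\sum_j\lambda^j(\cei{0}{x}{f})^{j+1}g$), because in the paper that result is deduced \emph{from} Theorem \ref{fell-pro}, so using it here is circular. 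A repair is possible --- e.g.\ use the elementary resolvent identity $U_0g-U_\lambda g=\lambda U_0U_\lambda g$ for the potential operators of the strong Markov process of Theorem \ref{str-mar-pro}, combine it with $U_0=\cei{0}{\cdot}{f}$ (Theorem \ref{pot-rep-cen-int}) and Theorem \ref{inv-cen-ope-16} to conclude $(\lambda I+\ced{0}{\cdot}{f})U_\lambda g=g$ --- but that route first requires continuity of $U_\lambda g$ in $x$, i.e.\ a piece of exactly the regularity you hoped to bypass. As written, the key step is missing.

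For comparison, the paper does the opposite: it proves the Feller property directly from the construction --- strong continuity by splitting at $\tau_1$ and using $\phi(0+)=0$, and continuity in the starting point by coupling ${}^{x}S^c$ and ${}^{y}S^c$ and controlling the event that their censoring times differ, via the joint law of $(S_{\tau-},S_\tau)$ from Bertoin and the absolute continuity of $\mu$ --- and only afterwards identifies the generator through the $0$-potential and Dynkin's theorem; your analytic scaffolding (positive maximum principle, range and density, Theorem \ref{inh-ivp}) is sound but cannot replace that probabilistic identification unless the resolvent step above is actually proved. Two smaller points: in your maximum-principle computation the term $-\phi(x_0)\bar\mu(x_0)$ should not appear, since by Definition \ref{cen-bef-def} one has $\ced{0}{x_0}{f}\phi(x_0)=\int_0^{x_0}(\phi(x_0)-\phi(x_0-s))\,\mu(ds)$ (what you wrote is $-\rld{0}{x_0}{f}\phi$; the sign conclusion survives, but the identity is off); and the exponential formula you invoke defines the Hille--Yosida--Ray semigroup, not $\Ee^x[\phi(S^c_t)]$, so it only helps once the resolvent identification and a uniqueness-of-Laplace-transform argument (using right-continuity of $t\mapsto\Ee^x[\phi(S^c_t)]$) are in place.
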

\begin{proof} 
Recall the definition of the Hille--Yosida--Ray theorem  %\ref{fell-sem-gro} 
and Theorem \ref{str-mar-pro}. 
We know that $S^c_\cdot$ is a Markov process, so  $P_t^c\phi(x)=\Ee^x[\phi(S_t^c)]$ is a positivity preserving contraction semigroup on the Borel--measurable functions $\mathcal{B}[0,T]$. If we can show  that $P_t^c$ is a Feller operator i.e. $ P_t^c: C_\infty(0,T]\to C_\infty(0,T]$ and  $t\mapsto P_t^c \phi $ is strongly continuous, then we can show $P_t^c$ is a Feller semigroup.

First we show  $P_t^c $ is strongly continuous  on $C_\infty(0,T]$. Assume $\phi\in C_\infty(0,T]$, and define $\phi(x)=\phi(0+), x\in (-\infty,0]$, 
\begin{align*}
|P_t^c\phi(x)-\phi(x)|&=\left|\Ee^x[\phi(S^c_t)]-\phi(x)\right|
\\&=\left|\Ee^x[\phi(S^c_t)]-\Ee^x [\phi(x)]\right|
\\&=\left|\Ee^x\left[\phi(S^c_t)(1_{\{t<\tau_1\}}+1_{\{t\geq \tau_1\}})\right]-\Ee^x \left[\phi(x)(1_{\{t<\tau_1 \}}+1_{\{t\geq \tau_1\}})\right]\right|
\\&\leq \left|\Ee^x\left[(\phi(S^c_t)-\phi(x))1_{\{t<\tau_1\}}\right]\right|+\left|\Ee^x \left[(\phi(S^c_t)-\phi(x))1_{\{t\geq \tau_1\}}\right]\right|
\\&=\left|\Ee^ 0 \left[(\phi(S^1_t+x)-\phi(x))1_{\{t<\tau_1\}}\right]\right|+\left|\Ee^x \left[(\phi(S^c_t)-\phi(x))1_{\{t\geq \tau_1\}}\right]\right|.
\end{align*}
According to the  construction of  Section \ref{con-cen-pro}, we can get that the first summand vanishes uniformly in $x$ as $t\to 0$ because $S^1_t$ is a subordinator.  For the second one we have 
\begin{align*}
\left|\Ee^x \left[(\phi(S^c_t)-\phi(x))1_{\{t\geq \tau_1\}}\right]\right|&\leq 2\|\phi\|_{C[0,x]}\mathbb{P}^0[t\geq \tau_1]
\\&\leq 2\|\phi\|_{C[0,x]}\mathbb{P}^0[t\geq E_1(x)]
\\&\leq 2\|\phi\|_{C[0,x]}\mathbb{P}^0[x-S^1_t\leq 0]
\\&\leq 2\|\phi\|_{C[0,x]}\mathbb{P}^0[x\leq S^1_t].
\end{align*}
 Because of $\phi(0+)=0$, for any $\varepsilon>0$ there exist some   $ \delta$ such that for $0\leq x\leq \delta$, \linebreak $\|\phi\|_{C[0,x]}\leq \varepsilon$, and we have $ \mathbb{P}[x\leq S^1_t]\leq 1$.  If $t\to 0$ and $x>\delta$ we have $\mathbb{P}[x\leq S^1_t]\to 0$. Thus we have the following estimate as $ t\to 0$,
\begin{alignat*}{2}
\|\phi\|_{C[0,x]}\mathbb{P}[x\leq S^1_t] \leq \left\{
\begin{aligned} &\varepsilon, ~~~~~~~~ 0\leq x\leq \delta, \\&\varepsilon \|\phi\|_{C[0,T]} , ~~\delta<x\leq T. %\\ \lim_{j\to\infty}\sum_{i=1}^{j}\sigma_i&\mbox{if } j =\infty,
 \end{aligned}
\right.
\end{alignat*}
%\begin{equation*}
%\|\phi\|_{C[0,x]}\mathbb{P}[x\leq S^1_t] \leq \begin{cases} &\varepsilon, ~~~~~~~~ 0\leq x\leq \delta, \\&\varepsilon \|\phi\|_{C[0,T]} , ~~\delta<x\leq T.
%\end{cases}
%\end{equation*}
This proves that  $P_t^c$ is strongly continuous on $C_\infty(0, T]$. 

Next, we show $P_t^c : C_\infty(0,T]\to C_\infty(0,T]$. We begin by showing that $(P_t^c \phi)(0+)=0$. Clearly $\phi\in C_\infty(0, T]$ satisfies $\phi(0+)=0$, thus our previous calculations show 
\begin{align*}
\left| P_t^c \phi(x)-\phi(x)\right|\leq \left|\Ee^0\left[(\phi(S^1_{t}+x)-\phi(x)) \mathds{1}_{\{t\leq \tau_1\}}\right]\right| +2\|\phi\|_{C[0, x]}\Pp^0(x\leq S_t^1).
\end{align*}
Since $S_t^1\leq x$ if $t\leq \tau_1=\tau(x)$ we get $$\left| P_t^c \phi(x)-\phi(x)\right|\leq 4\|\phi\|_{C[0, x]}\to 0, \quad \text{as} \quad x\to 0. $$
Now we show that $x\mapsto P_t^c\phi(x)$ is continuous. Pick $\phi\in C_\infty(0, T]$ and assume, without loss of generality, that $0\leq x\leq y$. Let $h>0$ be small and write ${}^{z}S^c_t$ for the process $S^c_t$ with $S^c_0=z$. We have
  %$\phi\in C_\infty(0, T]\Rightarrow \lim_{x\to0}\phi(x)=0, |\phi(x)|\leq \|\phi\|_{\infty}$ is integrable.  $\lim_{x\to 0}P_t^c\phi(x)=\lim_{x\to0}\Ee^x \phi(S^c_t)=0$ follows from the above results. 
%Assume $\phi\in C_\infty(0,T]$ and $x\geq y\geq0$, $h$ is small, 
\begin{align*}
&\left|P_t^c\phi(x)-P_t^c\phi(y)\right|\\&=\left|\Ee^x\phi(S^c_t)-\Ee^y\phi(S_t^c)\right|
\\&=\left|\Ee\left[\mathds{1}_{\{{}^{x}S^c_t-{}^{y}S^c_t\leq h\}}\left[\phi({}^{x}S^c_t)-\phi({}^{y}S^c_t)\right]\right]+\Ee\left[1_{\{{}^{x}S^c_t-{}^{y}S^c_t>h\}}\left[\phi({}^{x}S^c_t)-\phi({}^{y}S^c_t)\right]\right]\right|
\\&\leq \left|\Ee\left[\mathds{1}_{\{{}^{x}S^c_t-{}^{y}S^c_t\leq h\}}\left[\phi({}^{x}S^c_t)-\phi({}^{y}S^c_t)\right]\right]\right|+\left|\Ee\left[1_{\{{}^{x}S^c_t-{}^{y}S^c_t>h\}}\left[\phi({}^{x}S^c_t)-\phi({}^{y}S^c_t)\right]\right]\right|
\\&\leq \left|\Ee\left[\mathds{1}_{\{{}^{x}S^c_t-{}^{y}S^c_t\leq h\}}\left[\phi({}^{x}S^c_t)-\phi({}^{y}S^c_t)\right]\right]\right|+2 \|\phi\|_{C[0,T]} \left|\Ee\left[\mathds{1}_{\{{}^{x}S^c_t-{}^{y}S^c_t>h\}}\right]\right|
\\&\leq \varepsilon +2 \|\phi\|_{C[0,T]} \mathbb{P}({}^{x}S^c_t-{}^{y}S^c_t>h).
\end{align*}
 In the last step we use that $\phi\in C_\infty(0, T]$ is uniformly continuous. Note that $h=h_\varepsilon$. 
We have the following two cases: 

1. If $0\leq y\leq x\leq h$, it is clear that $\mathbb{P}({}^{x}S^c_t-{}^{y}S^c_t>h)\to 0$ as $ x-y\to 0$.
%\bigskip

2. If $x>h$, it is enough for us to consider the first censoring time \linebreak $\tau_y=\inf\{t\geq 0 : {}^{y}S^c_t\leq 0\}$  for ${}^{y}S^c_t$ and ${}^{x}S^c_{\tau_y}\geq 0$ see Figure 1.   %Denote $\tau_y=\inf\{t>0: {}^{y}S^c_t<0\}$  is the first bad time. 
If $t< \tau_y$ we have $\mathbb{P}({}^{x}S^1_t-{}^{y}S^1_t>h)\to 0$ as $x-y\to 0$. 
\begin{figure}
    \centering    \includegraphics[width=0.9\textwidth]{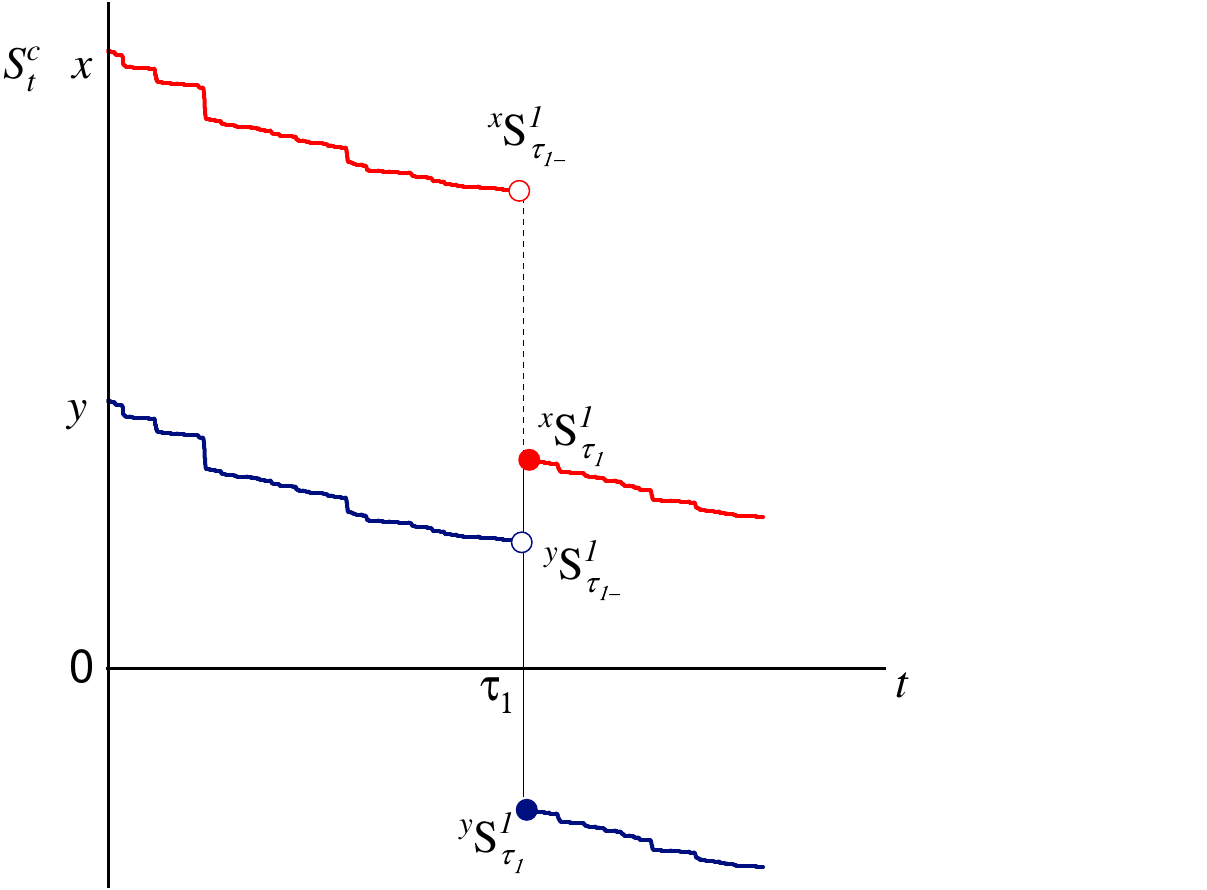}
    \caption{}
\end{figure}
Using the construction of $S^c_\cdot$, we have the following, (see also Figure 4.1):  write $\Delta S_t=S_t-S_{t-}$ for the jump at time $t$, then, 
\begin{align*}
&\mathbb{P}\left[0\in (-\Delta S_{\tau_y}^1+({}^{y}S^1_{\tau_y-}, {}^{x}S^1_{\tau_y})\right]
%&=\mathbb{P}\left[\Delta S_{\tau_y}^1\in ({}^{y}S^1_{\tau_y-}, {}^{x}S^1_{\tau_y-})\right]
\\&=\mathbb{P}\left[\Delta S_{\tau_y}^1\in ({}^{y}S^1_{\tau_y-}, {}^{y}S^1_{\tau_y-}+x-y)\right]
\\&=\mathbb{P}\left[{}^{y}S^1_{\tau_y-}+\Delta S_{\tau_y}^1\in (2~{}^{y}S^1_{\tau_y-}, 2~{}^{y}S^1_{\tau_y-}+x-y)\right]
\\&=\mathbb{P}\left[{}^{y}S^1_{\tau_y}\in (2~{}^{y}S^1_{\tau_y-}, 2~{}^{y}S^1_{\tau_y-}+x-y)\right]
\\&=\int_0^{x+y}\int_0^y1_{\{2u \leq v\leq 2u+x-y\}}\mathbb{P}\left[{}^{y}S^1_{\tau_y}\in dv, {}^{y}S^1_{\tau_y-}\in du\right]
\\&=\int_0^{x+y}\int_0^y1_{\{2u \leq v\leq 2u+x-y\}}U(du)\,\bar\mu(dv-u) \xrightarrow[x-y\to 0]{}0.
\end{align*}
where the penultimate equality uses Proposition 2 \cite[p. 76]{1996_Bertoin}.  By Assumption \ref{ass-a} our jump measure is (absolutely) continuous, so the last limits holds.

Finally we show that the generator of $P_t^c \phi(x)=\Ee^x[\phi(S^c_t)]$ is  $-\ced{0}{\cdot}{f}$. Using Theorem \ref{pot-rep-cen-int},  we know the potential of $P_t^c$ is $\cei{0}{\cdot}{f}$  and by \cite[p. 26]{1965_Dynkin} and Theorem \ref{inv-cen-ope-16}, we can get the above results.
\end{proof}
%\begin{remark} This proof is different with \cite{2021_Du}.
%\end{remark}
%\begin{example} Let $S^c_t$ is Poisson process in $\real$ with jump height 1 and intensity $\nu(dx)=\delta_1(dx)$. Take $\varphi(x)=1_{(0.5, 1.5)} \varphi_{\epsilon}(x)$, where $\varphi_{\epsilon}$ is mollifies. Let us consider the following $\Ee^{0.9}[\varphi(S_t^c)]=\Ee[\varphi(0.9)]$ and $\Ee^{1.1}[\varphi(S_t^c)]=\mathbb{P}(t\leq \tau)\varphi(1.1)+\mathbb{P}(t>\tau)\varphi(1.1)$ where $\tau$ is the first censored time. 
%\begin{align*}
%&|\Ee^{1-\varepsilon}[\varphi(S_t^c)]-\Ee^{1+\varepsilon}[\varphi(S_t^c)]|\\&=|\mathbb{P}(t\leq \tau)[\varphi(1-\varepsilon)-\varphi(1+\varepsilon)]+\mathbb{P}(t>\tau)[\varphi(1-\varepsilon)-\varphi(\varepsilon)]|
%\\&\leq |\mathbb{P}(t\leq \tau)[\varphi(1-\varepsilon)-\varphi(1+\varepsilon)]|+|\mathbb{P}(t>\tau)[\varphi(1-\varepsilon)-\varphi(\varepsilon)]|
%\\&\to \mathbb{P}(t>\tau)|\varphi(1)-\varphi(0)|.
%\end{align*}
%\end{example}
% \begin{lemma}\label{pos-max-pri} If  $\phi\in C^1(0,T]\cap L^1(0,T]$, then  $-\ced{0}{\cdot}{f}$ satisfies the positive maximum principle.
% \end{lemma}
% \begin{proof}
% Assume $\phi(x_0)=\sup_{(0,T]}\phi(x)\geq0$. Using the Lemma 3.4.7 in \cite{2023_Li}, we have 
% \begin{align*}
% -\ced{0}{x_0}{f}\varphi(x_0)&=-\int_0^{x_0} \left(\varphi(x_0)-\varphi(x_0-s)\right)\mu(ds)
% \\&=\int_0^{x_0}(\varphi(x_0-r)-\varphi(x_0))\,\mu(ds)
% \\&\leq 0 \qedhere
% \end{align*}
% \end{proof}

\begin{remark} 
\begin{enumerate}[(1)]
\item $-\ced{0}{\cdot}{f}$ is dissipative.  Thus follows from the structure of $-\ced{0}{\cdot}{f}$: $-\ced{0}{\cdot}{f}$ satisfies  the positive maximum principle and positive maximum principle implies dissipative. 
\item  Using standard theory e.g. \cite[P.28]{1965_Dynkin} Theorem 1.3., the following evolution equation has a unique solution $\phi(t, x)=\Ee^x[g(S_t^c)]$, $g\in \mathcal{D}(-\ced{0}{x}{f})$.
\begin{equation} 
 \left\{
\begin{aligned}
 \partial_t \phi(t,x)&=-\ced{0}{x}{f} \phi(t,x),
\\ \phi(0,x)&=g(x).
\end{aligned}
\right.
\end{equation}
\end{enumerate}
\end{remark}
When solving exit problems by computing the Laplace transform of the lifetime of a killed Markov process, we can obtain the analytical solution to the resolvent equation.
\begin{theorem} Assuming $\lambda<0$, $T>0$ and $g\in C[0, T]$,  we have 
\begin{equation}\label{res-equ}
\Ee^x\left[\int_0^{\tau_\infty}e^{\lambda t}g(S_t^c)\,dt\right]=\sum_{j=0}^\infty \lambda^j (\cei{0}{x}{f})^{j+1} g.
\end{equation}
In particular, $\Ee^x[e^{-\lambda \tau_\infty}]=\sum_{j=0}^\infty\lambda^j (\cei{0}{x}{f})^j \mathds{1}$.
\end{theorem}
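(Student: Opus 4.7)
The plan is to identify the expectation on the left-hand side of \eqref{res-equ} with the unique solution of the inhomogeneous resolvent equation \eqref{in-hom-ivp} from Theorem \ref{inh-ivp} (with initial value $\phi_0=0$), and then invoke uniqueness to match it with the series $\sum_{j=0}^\infty \lambda^j(\cei{0}{x}{f})^{j+1}g$. Concretely, I set
\[
\phi(x) := \Ee^x\left[\int_0^{\tau_\infty}e^{\lambda t}g(S_t^c)\,dt\right]
\]
and aim to derive a recursion for $\phi$ which, upon iteration, reproduces the claimed series.

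The recursion comes from the elementary identity $e^{\lambda t}=1+\lambda\int_0^t e^{\lambda s}\,ds$. Substituting this into the definition of $\phi$, applying Fubini to swap the order of $s$ and $t$, and then using the strong Markov property of $S^c$ at time $s$ (Theorem \ref{str-mar-pro}) together with the potential representation $\cei{0}{\cdot}{f}g(y)=\Ee^y[\int_0^{\tau_\infty}g(S_t^c)\,dt]$ of Theorem \ref{pot-rep-cen-int}, I obtain
\[
\phi(x) = \cei{0}{x}{f}g(x) + \lambda\,\Ee^x\left[\int_0^{\tau_\infty}e^{\lambda s}\bigl(\cei{0}{\cdot}{f}g\bigr)(S_s^c)\,ds\right].
\]
Writing $T_\lambda h(x):=\Ee^x[\int_0^{\tau_\infty}e^{\lambda s}h(S_s^c)\,ds]$, this becomes $\phi=\cei{0}{x}{f}g+\lambda T_\lambda(\cei{0}{\cdot}{f}g)$, and iterating $n$ times yields
\[
\phi(x) = \sum_{j=0}^n \lambda^j(\cei{0}{x}{f})^{j+1}g(x) + \lambda^{n+1}\, T_\lambda\bigl((\cei{0}{\cdot}{f})^{n+1}g\bigr)(x).
\]

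The last step is to send $n\to\infty$ and control the remainder. Since $\lambda<0$ gives $|e^{\lambda s}|\leq 1$, the remainder is bounded by $|\lambda|^{n+1}\|(\cei{0}{\cdot}{f})^{n+1}g\|_{C[0,T]}\,\Ee^x[\tau_\infty]$, and the expectation is finite by Lemma \ref{pro-con-pro}(iii). Because $\cei{0}{\cdot}{f}$ preserves positivity (it is built from $\rli{0}{\cdot}{f}$ and the positivity-preserving operator $\mathcal{K}$), we have the domination $|(\cei{0}{\cdot}{f})^{n+1}g|\leq \|g\|_\infty(\cei{0}{\cdot}{f})^{n+1}\mathds{1}$, so it suffices to prove $|\lambda|^{n+1}\|(\cei{0}{\cdot}{f})^{n+1}\mathds{1}\|_{C[0,T]}\to 0$. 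This is the principal obstacle, and the natural route is to replay the Laplace-transform argument from the proof of Theorem \ref{lin-cen-ini-val}: the pointwise bound \eqref{est-gen-ker-3} together with $\Lscr[(\rli{0}{x}{f})^{n}\int_0^{\bullet}k(y)\,dy;s]=1/(sf(s)^{n+1})$ and the unboundedness of $f$ yield uniform convergence of the majorizing series $\sum_j|\lambda|^j(\cei{0}{x}{f})^j\mathds{1}$ on $[0,T]$, forcing its individual terms to zero uniformly. This identifies $\phi$ with the stated series. The particular case follows by setting $g\equiv 1$: the integral $\int_0^{\tau_\infty}e^{\lambda t}\,dt=(e^{\lambda\tau_\infty}-1)/\lambda$ then produces, upon taking expectations and rearranging the partial sums, the displayed power-series formula for the Laplace transform of $\tau_\infty$.
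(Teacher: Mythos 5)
Your argument is correct, but it proceeds along a genuinely different route than the paper. The paper first restricts to $g\in C_\infty(0,T]$ and identifies the left-hand side of \eqref{res-equ} with the unique solution of the resolvent equation via the Feller semigroup of Theorem \ref{fell-pro} (Hille--Yosida--Ray) together with Theorem \ref{inh-ivp}, and then passes to general $g\in C[0,T]$ by an approximation $g_n\to g$, using dominated convergence on the probabilistic side and positivity/monotone limits on the series side. You instead compute directly: expanding $e^{\lambda t}=1+\lambda\int_0^t e^{\lambda s}\,ds$, applying Fubini and the Markov property at time $s$, and invoking the potential representation $\cei{0}{y}{f}g=\Ee^y[\int_0^{\tau_\infty}g(S^c_t)\,dt]$ of Theorem \ref{pot-rep-cen-int} gives the exact finite expansion $\phi=\sum_{j=0}^n\lambda^j(\cei{0}{x}{f})^{j+1}g+\lambda^{n+1}T_\lambda\bigl((\cei{0}{\cdot}{f})^{n+1}g\bigr)$, whose remainder you kill using $\lambda<0$, $\Ee^x[\tau_\infty]<\infty$ (Lemma \ref{pro-con-pro}), positivity preservation, and the uniform convergence of $\sum_j|\lambda|^j(\cei{0}{x}{f})^j\mathds{1}$ already secured by \eqref{est-gen-ker-3} in Theorem \ref{lin-cen-ini-val}. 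This buys a self-contained, quantitative proof valid for all $g\in C[0,T]$ at once, bypassing both the semigroup machinery and the $C_\infty$-approximation step (note that your opening appeal to uniqueness in Theorem \ref{inh-ivp} is in fact not needed once the iteration with remainder is carried out); what the paper's route buys is that it ties the identity directly to the generator/resolvent identification that drives its Hille--Yosida program. Two small points: iterating your recursion requires $(\cei{0}{\cdot}{f})^{j}g\in C[0,T]$, which is exactly Remark \ref{re-3-4-18}, so it is available; and your special case yields $\Ee^x[e^{\lambda\tau_\infty}]=\sum_{j\geq0}\lambda^j(\cei{0}{x}{f})^j\mathds{1}$, which is the sign-consistent form for $\lambda<0$ (the exponent $e^{-\lambda\tau_\infty}$ in the paper's statement appears to be a sign slip), and is otherwise the same computation as the paper's choice $g=\lambda$.
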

\begin{proof}
If $g\in C_\infty(0, T]$, using Hille--Yosida--Ray Theorem %Theorem \ref{fell-pro} 
and Theorem \ref{fell-pro}, the equality holds true as the following equation has a unique solution:
\begin{equation*} 
\left\{
\begin{array}{lll}
\ced{0}{x}{f}\phi(x)&=\lambda \phi(x)+g(x), &x\in (0, T],
\\ \phi(x)&=0,\quad &x=0. 
\end{array}
\right.
\end{equation*}
Now, for any $g\in C[0, T]$, take $g_n\in C_\infty(0, \infty]$ such that $g_n$ converges to $g$ locally uniformly in $(0, T]$. For any fixed $x\in (0, T]$ and $t>0$, we have 
$$\Ee^x[g_n(S_t^c)]\to \Ee^x[g(S_t^c)], \quad \text{as} \quad n\to \infty, $$
by using the dominated convergence theorem, since $\sup_n\|g\|_{C[0, T]}<\infty$. With the dominating function $\sup_n\|g_n\|_{C[0, T]} e^{\lambda t}<\infty$ and  using Lemma \ref{pro-con-pro} $\Ee(\tau_\infty)<\infty$, we apply  again dominated convergence theorem. This gives 
$$\Ee^x\left[\int_0^{\tau_\infty}e^{-\lambda t}g_n(S_t^c)\,dt\right]\to \Ee^x\left[\int_0^{\tau_\infty}e^{-\lambda t}g(S_t^c)\,dt\right], \quad \text{as} \quad n\to \infty.$$
%On the other hand, by the continuous dependence in Theorem \ref{inh-ivp},
Assume, for a moment that $g\geq 0$. Clearly we can choose  $0\leq g_n\leq g$ such that $g=\sup_n g_n$ (increasing limit). Since $\cei{0}{{\tiny\bullet}}{f}$ is  positivity preserving and linear,  we have $\cei{0}{x}{f} g_n \uparrow \cei{0}{x}{f} g$ as $n$ goes to $\infty$. and so we have 
 $$\sum_{j=0}^\infty\lambda^j (\cei{0}{x}{f})^{j+1} g_n \to \sum_{j=0}^\infty\lambda^j (\cei{0}{x}{f})^{j+1} g, \quad \text{as} \quad n\to \infty.$$
 The general case follows by considering positive and negative parts: $g=g^++g^-$ with $g\in C[0, T]$ and $g_n\to g^+$, $h_n\to g^-$ as $n$ goes to $\infty$. 
Therefore we have proved \eqref{res-equ} for all functions $g\in C[0, T]$.

To prove the special case we take $g=\lambda$, so the left side of \eqref{res-equ} become,
$$\Ee^x\left[\int_0^{\tau_\infty}e^{\lambda t}\lambda\,dt\right]=\Ee^x e^{-\lambda \tau_\infty}-1,$$
 and the right side of \eqref{res-equ} gives 
 $\sum_{j=0}^\infty\lambda^{j+1}(\cei{0}{x}{f})^{j+1}\mathds{1}=\sum_{j=0}^\infty\lambda^j (\cei{0}{x}{f})^j \mathds{1}-1.$
This finishes the proof.
\end{proof}

\appendix
%\include{appendices/app1}
%\begin{appendices}
%\section*{appendix}
\section{Completely Bernstein functions and Sonine pairs}
     Let $f\in \mathcal{BF}$ be given by \eqref{sub-bf-1} and assume that  $f^\star(\lambda)=\frac{\lambda}{f(\lambda)}$. 
     \begin{equation}\label{sub-bf-1}
 f(\lambda)=\frac{\lambda}{f(\lambda)}=a+b \lambda+\int_0^\infty (1-e^{-\lambda t})\mu (dt)
\end{equation}
     
     Then
\begin{equation}\label{str-2-3-7}
 f^\star(\lambda)=\frac{\lambda}{f(\lambda)}=a^\star+b^\star \lambda+\int_0^\infty (1-e^{-\lambda t})\mu^\star (dt)
\end{equation}
\begin{theorem}\label{cbf-cm} Let  $f\in \mathcal{CBF}$ be a complete Bernstein function with triplet $(a, b, \mu)$, $\bar\mu(x)=\mu[x, \infty)$ and $f^\star=\lambda/f(\lambda)$ with triplet $(a^\star, b^\star, \mu^\star)$, $k(x)=\mu^\star [x, \infty)$ as in \eqref{str-2-3-7}. Then 
$$ 
\frac{f(\lambda)}{\lambda}= \Lscr(a+\bar\mu+b\delta_0;\lambda);
$$
$$
\frac{1}{f(\lambda)}=\Lscr(a^\star+k+b^\star\delta_0;\lambda).
$$
are Stieltjes functions and $a+\bar\mu$ and $a^\star+k$ are completely monotone functions. Moreover, using the convention that $1/\infty=0$. One has

\begin{equation}\label{cbf-3-1-3}
 f^\star (0)=a^\star =\lim_{\lambda \to 0} \frac{\lambda}{f(\lambda)}= \left\{
\begin{aligned}
 & 0,&\quad a>0, \\[-1ex]
\\ & \frac 1{b+\int_0^\infty t \mu(dt)}, &\quad a=0.
\end{aligned}
\right.
\end{equation}
\begin{equation}\label{cbf-3-1-4}
b^\star=\lim_{\lambda\to \infty}\frac{ f^\star (\lambda)}{\lambda}=\lim_{\lambda\to \infty}\frac{1}{f(\lambda)}= \left\{
\begin{aligned}
 & 0,&\quad b>0,\\[0.5ex]
\\& \frac 1{a+\int_0^\infty \mu(dt)}, &\quad b=0.
\end{aligned}
\right.
\end{equation}
and $(a+\bar\mu(x))+b\delta_0$ and $(a^\star+k)+b^\star\delta_0$ is a Sonine pair.  If $b=b^\star=0$ see Table \ref{tab:table1} or \eqref{cbf-3-1-3} and \eqref{cbf-3-1-4}, then $a+\bar\mu(x)$ and $a^\star+k$ is a Sonine pair of completely monotone functions.
\end{theorem}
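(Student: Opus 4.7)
The plan is to handle the four assertions of Theorem \ref{cbf-cm} in turn using three techniques: direct manipulation of the Lévy–Khintchine integral, the standard CBF–Stieltjes duality, and monotone convergence in $\lambda$.

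First I would establish the two Laplace-transform identities. Dividing $f(\lambda)=a+b\lambda+\int_0^\infty(1-e^{-\lambda t})\mu(dt)$ by $\lambda$ and writing $(1-e^{-\lambda t})/\lambda=\int_0^t e^{-\lambda s}\,ds$, one application of Fubini rewrites the integral term as $\int_0^\infty e^{-\lambda s}\bar\mu(s)\,ds=\Lscr(\bar\mu;\lambda)$. Adding $a/\lambda=\Lscr(a;\lambda)$ and $b=\Lscr(b\delta_0;\lambda)$ delivers $f(\lambda)/\lambda=\Lscr(a+\bar\mu+b\delta_0;\lambda)$, and the same computation applied to $f^\star$ with triplet $(a^\star,b^\star,\mu^\star)$ from \eqref{str-2-3-7} yields the companion identity.

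Next I would invoke the classical equivalence $f\in\mathcal{CBF} \iff f^\star\in\mathcal{CBF} \iff f(\lambda)/\lambda$ is a Stieltjes function. A Stieltjes function has the representation $S(\lambda)=c/\lambda+d+\int_0^\infty \sigma(ds)/(\lambda+s)$, and using $1/(\lambda+s)=\int_0^\infty e^{-(\lambda+s)t}\,dt$ this becomes the Laplace transform of $c+\psi+d\delta_0$ with $\psi(t)=\int_0^\infty e^{-st}\sigma(ds)$ completely monotone. Matching with step one identifies $b$ (resp.\ $b^\star$) as the atom at $0$ and $a+\bar\mu$ (resp.\ $a^\star+k$) as CM on $(0,\infty)$; complete monotonicity of $\bar\mu$ alone also follows directly, since if $m(t)=\int_0^\infty e^{-st}\rho(ds)$ is the CM density of $\mu$ (Bernstein's theorem) then $\bar\mu(x)=\int_0^\infty s^{-1}e^{-sx}\rho(ds)$ is a mixture of exponentials. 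The limits \eqref{cbf-3-1-3} and \eqref{cbf-3-1-4} then fall out of monotone convergence inside the Lévy–Khintchine integral: $(1-e^{-\lambda t})/\lambda\uparrow t$ as $\lambda\downarrow 0$ gives $f(\lambda)/\lambda\to b+\int t\,\mu(dt)$ when $a=0$ (and $f(0+)=a$ when $a>0$), while $1-e^{-\lambda t}\uparrow 1$ as $\lambda\uparrow\infty$ gives $f(\lambda)\to a+\mu(0,\infty)$ when $b=0$ (and $f(\lambda)/\lambda\to b$ when $b>0$); reciprocating produces the two case distinctions.

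Finally, multiplying the two Laplace identities produces
\[
\Lscr\bigl((a+\bar\mu+b\delta_0)*(a^\star+k+b^\star\delta_0);\lambda\bigr)
=\frac{f(\lambda)}{\lambda}\cdot\frac{1}{f(\lambda)}=\frac{1}{\lambda}=\Lscr(1;\lambda),
\]
so uniqueness of the Laplace transform on measures on $[0,\infty)$ forces the convolution to equal Lebesgue measure on $(0,\infty)$, which is the Sonine equation; setting $b=b^\star=0$ removes the atoms and leaves the CM Sonine pair $(a+\bar\mu,\,a^\star+k)$. The main obstacle is step two: lining up the bookkeeping between the Stieltjes representation of $f(\lambda)/\lambda$ and the explicit Laplace identity so that the atom $b\delta_0$ matches the constant term in the Stieltjes form and $a+\bar\mu$ matches the CM density. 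The CBF–Stieltjes duality itself I would cite from the standard reference on complete Bernstein functions rather than reprove; once that identification is clean, every remaining piece is either a direct Fubini/MCT argument or a formal manipulation of Laplace transforms.
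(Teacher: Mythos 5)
Your proposal is correct and follows essentially the same route as the paper's proof: the Fubini computation giving $f(\lambda)/\lambda=\Lscr(a+b\delta_0+\bar\mu;\lambda)$, the conjugate complete Bernstein function $f^\star(\lambda)=\lambda/f(\lambda)$ for the second identity, multiplication of the two transforms to $1/\lambda$ plus Laplace uniqueness for the Sonine equation, and the CBF--Stieltjes duality for the complete monotonicity statements. The only deviation is that you derive the case formulas \eqref{cbf-3-1-3} and \eqref{cbf-3-1-4} directly by monotone convergence in the L\'evy--Khintchine integral, where the paper cites \cite{2012_Schilling}; that substitution is sound and makes the argument slightly more self-contained.
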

\begin{proof}
 We have  $f(\lambda)=a+b\lambda +\int_0^\infty (1-e^{-\lambda t})\,\mu(dt)$ and so 
\begin{align*}
\frac{f(\lambda)}{\lambda}&\pomu{Fubini}{=}{}\frac{a}{\lambda}+b+\int_0^\infty \frac{1-e^{-\lambda t}}{\lambda}\,\mu(dt)
\\&\pomu{Fubini}{=}{}\frac{a}{\lambda}+b+\int_0^\infty \int_0^t e^{-\lambda t}\,dt\,\mu(dt)
\\&\omu{Fubini}{=}{} \frac{a}{\lambda}+b+\int_0^\infty \int_x^\infty \,\mu(dt) e^{-\lambda t}\,dt
\\&\pomu{Fubini}{=}{} \frac{a}{\lambda}+b+\int_0^\infty\bar\mu(x) e^{-\lambda t}\,dt
\\&\pomu{Fubini}{=}{} \Lscr(a+b\delta_0+\bar\mu(x); \lambda).
\end{align*}
denote by $f^\star=\frac{\lambda}{f(\lambda)}$ the conjugate Bernstein function. Since $f\in \mathcal{CBF}$, so is $f^\star$  and  using $(a^\star, b^\star,\mu^\star)$ for its triplet, we get 
$$\frac1{f(\lambda)}=\frac{f^\star(\lambda)}{\lambda}=\Lscr(a^\star+b^\star\delta_0+k(x); \lambda).$$
Since  $\frac{1}{\lambda}=\frac{1}{f(\lambda)} \frac{f(\lambda)}{\lambda}$. It is obvious that $a+b\delta_0+\bar\mu(x)$ and $a^\star+b^\star\delta_0+k(x)$ is a Sonine pair (note that $bb^\star=0$, i.e. at least one Sonine factor is a function) and the relation \eqref{cbf-3-1-3} and \eqref{cbf-3-1-4} follows from \cite[Chapter 11]{2012_Schilling} or Theorem 2.3.11 in \cite{2023_Li}.  

Note that (see Table \ref{tab:table1}) $b=b^\star=0$ if and only if we are in the case (1), (2), (4) as shown in Table \ref{tab:table1}. Since $f$ is completely monotone function, $\frac{f(\lambda)}{\lambda}$, $\frac{1}{f(\lambda)}$ are Stieltjes functions, hence $a+\bar\mu$ and $a^\star+k$ are completely monotone functions.
\end{proof}
%We use ''--'' indicates that there is no condition needed.
\begin{table}[h!]
  \begin{center}
    \caption{}
    \label{tab:table1}
    \begin{adjustbox}{width=0.99\textwidth}
\begin{tabular}{c|c|c|c|c|c|c}
%\cline{2-4}\multicolumn{1}{l|}{}
%\hline
Nr. & a &b & $m_0=\int_0^\infty \mu(dt)$ &  $m_1=\int_0^\infty t \,\mu(dt)$&$a^\star$& $b^\star$\\ 
\hline
(1) & 0 &0& -- (no condition)&$\infty$&0&0\\
%\hline
(2) & 0 &0&$\infty$&$<\infty$&$1/m_1$&0\\
%\hline
(3) & 0 &0&$<\infty$&$<\infty$&$1/m_1$&$1/m_0$\\
%\hline
(4) & $>0$ &0&$\infty$&--&0&0\\
%\hline
(5) & $>0$ &0&$<\infty$&--&0&$1/(a+m_0)$\\
%\hline
(6) & 0 &$>0$&--&$\infty$&0&0\\
%\hline
(7) & 0 &$>0$&--&$<\infty$&$1/(b+m_1)$&0\\
%\hline
(8) & $>0$ &$>0$&--&--&0&0\\
%\hline
\end{tabular}
\end{adjustbox}
  \end{center}
\end{table} 
%\section{SBF}
%\end{appendices}
\subsection*{Declaration of interests}
{The authors report no conflict of interest.}

\newpage
\bibliographystyle{plain}
\bibliography{Thesis.bib}
\end{document}